\newcommand{\p}{^}
\newcommand{\wt}{\widetilde}
\newcommand{\ol}{\overline}
	\newcommand{\lsi}{\llbracket}
	\newcommand{\rmd}{\mathrm{d}}
\newcommand{\rme}{\mathrm{e}}
\newcommand{\E}{\mathrm{E}}
\newcommand{\R}{\mathrm{R}}
\newcommand{\Var}{\mathrm{Var}}
\newcommand{\Ascr}{\mathscr{A}}
\newcommand{\Bscr}{\mathscr{B}}
\newcommand{\Fscr}{\mathscr{F}}
\newcommand{\Gscr}{\mathscr{G}}
\newcommand{\Hscr}{\mathscr{H}}
\newcommand{\Mscr}{\mathscr{M}}
\newcommand{\Oscr}{\mathscr{O}}
\newcommand{\Pscr}{\mathscr{P}}
\newcommand{\Rscr}{\mathscr{R}}
\newcommand{\Mloc}{\Hscr\p1_{\mathrm{loc}}}
\newcommand{\et}{\eta}
\newcommand{\ep}{\varepsilon}
\newcommand{\Lm}{\Lambda}
\newcommand{\sig}{\sigma}
\newcommand{\om}{\omega}
\newcommand{\Om}{\Omega}
\newcommand{\cadlag}{c\`adl\`ag }
\newcommand{\Ebb}{\mathbb{E}}
\newcommand{\Fbb}{\mathbb{F}}
\newcommand{\Pbb}{\mathbb{P}}
\newcommand{\Rbb}{\mathbb{R}}
\newcommand{\Hbb}{\mathbb{H}}
\newcommand{\Gbb}{\mathbb{G}}
\newcommand{\Id}{\text{Id}}
\newcommand{\aPP}[2]{\ensuremath{\langle #1,#2 \rangle}}
\newtheorem{theorem}{Theorem}[section]
\newtheorem{lemma}[theorem]{Lemma}
\newtheorem{proposition}[theorem]{Proposition}
\newtheorem{corollary}[theorem]{Corollary}
\theoremstyle{definition}
\newtheorem{definition}[theorem]{Definition}
\newtheorem{cexample}[theorem]{Counterexample}
\newtheorem{assumption}[theorem]{Assumption}
\newtheorem{remark}[theorem]{Remark}
\numberwithin{equation}{section}
\def\R{\mathbb R}
\def\E{\mathbb E}
\def\P{\mathbb P}
\def\F{\mathbb F}
\def\calc{{\cal C}}
\def\calg{{\cal G}}
\def\calp{{\cal P}}
\def\calu{{\cal U}}
\def\call{{\cal L}}
\def\timenow{\@tempcnta\time
\@tempcntb\@tempcnta
\divide\@tempcntb60
\ifnum10>\@tempcntb0\fi\number\@tempcntb
:\multiply\@tempcntb60
\advance\@tempcnta-\@tempcntb
\ifnum10>\@tempcnta0\fi\number\@tempcnta}
\title{Progressively Enlargement of  Filtrations and  Control Problems for  Step Processes}
\author{Elena Bandini$^1$, Fulvia Confortola$^2$, Paolo Di Tella$^3$}
\date{}
\begin{document}
\maketitle
\begin{abstract}
In the present paper we address stochastic optimal control problems for a step process $(X,\Fbb)$ under a progressive enlargement of the filtration.  The  global  information  is obtained adding to the reference filtration $\Fbb$
the point process $H=1_{[\tau,+\infty)}$. Here $\tau$ is a random time that can be regarded as the occurrence time of an external shock event. 
 We  study two
classes of control problems, over $[0,T]$ and  over the random horizon $[0,T \wedge \tau]$.

 We solve these control problems following a dynamical approach based on a  class of BSDEs driven  by the jump measure $\mu\p Z$ of the semimartingale $Z=(X,H)$, which is a step process with respect to the enlarged filtration $\mathbb G$. The BSDEs that we consider can be solved in  $\Gbb$ thanks to a martingale representation theorem which we also establish here. To solve the BSDEs and the control problems we need to ensure that $Z$ is quasi-left continuous in the enlarged filtration $\Gbb$. Therefore, in addition to the  $\Fbb$-quasi left continuity of $X$, we assume some further conditions on $\tau$: the {\it avoidance} of $\Fbb$-stopping times  and the {\it immersion} property, or alternatively {\it Jacod's absolutely continuity} hypothesis.
 % and .
 \end{abstract}
 {\noindent
\footnotetext[1]{ \emph{Adress:} Universit\`a di Bologna, Dipartimento di Matematica. Piazza di Porta San Donato 5, 40126  Bologna.
\emph{E-Mail: }{\tt elena.bandini7{\rm@}unibo.it}}

 {\noindent
\footnotetext[2]{ \emph{Adress:} Politecnico di Milano, Dipartimento di Matematica. Via E. Bonardi 9, 20133 Milano.
\emph{E-Mail: }{\tt fulvia.confortola{\rm@}polimi.it}}

{\noindent
\footnotetext[3]{ \emph{Adress:} Inst.\ f\"ur Mathematische Stochastik, TU Dresden. Zellescher Weg 12-14 01069 Dresden, Germany.
\emph{E-Mail: }{\tt Paolo.Di\_Tella{\rm@}tu-dresden.de}}

\noindent{\small\textbf{Keywords:} Progressive enlargement of filtration,  stochastic optimal control, marked point processes, Backward Stochastic Differential Equations (BSDEs).
}

\medskip 

\noindent{\small\textbf{MSC 2010:} 93E20, 60G55, 60G57.}

\section{Introduction}\label{sec:intro}

 The enlargement of filtrations is an important subject in probability theory. Starting from the first works published in the 80s (see e.g. Jeulin \cite{Jeu80} and Jacod \cite{J85}), this topic has received growing interest  due to its
application in credit risk research to model default events. The reference filtration, which describes the default-free information structure, is enlarged by the knowledge of a default time when it occurs,
leading to a global filtration, called progressive enlargement of filtration. Usually, it assumes that the credit event should arrive by surprise, i.e. it is a totally inaccessible
random time for the reference filtration, see e.g. Bielecki and Rutkowski \cite{BR02}, Duffie  and Singleton \cite{DS03}, and  Sch\"{o}nbucher \cite{S03}.
In this framework  various  stochastic optimal control problems can naturally arise. Here the notion of information plays a crucial role. The decision-maker studies a dynamical systems subject to random
perturbations and can act on it in order to optimize some performance criterion. In line with intuition, the more information the controller has available, the better his choice will be.

In the present work we build a bridge between the theory of enlargement of filtrations and control problems
for marked point processes.
More precisely, let $X$ be a step process, that is, a semimartingale $X$ of the form $X=\sum_{0\leq s\leq\cdot}\Delta X_s$  with finitely many jumps over compact time intervals.
We denote by $\Fbb$  the filtration generated by $X$:  This filtration represents the information available in a market in which an agent (i.e., the controller) handles. We progressively enlarge $\F$ to $\Gbb$ by  a random time $\tau$, that  can be regarded as the occurrence time of an external shock event, as the death of the agent (e.g., life insurance) or the default of part of the market (e.g., credit risk).
We then study two related classes of control problems.
The first one consists in  optimization problems over $[0,T]$. They  can be regarded as control problems of an insider trader who has private information on $\tau$, that is, she can use $\Gbb$-predictable controls. Moreover, the control problem over $[0,T]$ allows to consider terminal costs which may depend on the default time $\tau$, i.e., \emph{defaultable  costs}, for example of the form $g=g_11_{\{\tau>T\}}+g_21_{\{\tau\leq T\}}$, see also Remark \ref{rem:int1}. 
It is evident that the associated control problem cannot be solved in the reference filtration $\Fbb$ because the random variable $g$ {is $\Gscr_T$-measurable}  but not $\Fscr_T$-measurable, in general. %\textcolor{red}{Analogous considerations can be done for the current costs}.
The second class of control problems we look at is over the random horizon  $[0,T\wedge\tau]$. These can be understood as  control problems of an agent  who only disposes of the information available in the market, that is, she only uses $\Fbb$-predictable controls but, for some reasons, she has exclusively access to the market up to $\tau$, see also Remark \ref{R:2}.
{In this case the current costs are assumed to be $\Fbb$-predictable, while the terminal condition  will be $\Gscr_{T\wedge\tau}$-measurable}.

 Stochastic  optimal control problems have been intensively studied over  these  last  years, enspired especially  by economics and finance, where they  arise in risk management, portfolio selection or optimal investment. Several approaches have been developed ranging from the dynamic programming  method,  Hamilton- Jacobi-Bellman  Partial  Differential  Equations    and Backward  Stochastic  Differential  Equations  (BSDEs)  to  convex  martingale  duality  methods. We refer  to the monographs of Fleming and Soner \cite{FS06} and   Yong and  Zhou \cite{YZ99}. In particular, the  theory  of  BSDEs  has  known  important  developments also in the progressive enlarged filtration setting. For instance in Pham \cite{P10} an
 optimal investment problem for an agent delivering the defaultable claim at maturity T is solved with the BSDE approach. Similar problems have been addressed in Lim and Quenez \cite{LQ08}, Ankirchner, Blanchet-Scalliet, and Eyraud-Loisel \cite{ABE09}, and Jiao and   Pham \cite{JP09}.

Other results concern  expected utility optimization problems with random terminal time, where  the progressively enlarged filtration  is used to handle the time $\tau$ of a shock that affects the market or the
agent. These problems  can be solved by introducing a
suitable BSDE over $[0, T\wedge\tau]$.
In Kharroubi and Lim \cite{KL12} and Kharroubi, Lim and Ngoupeyou \cite{KLN13}, the authors consider optimization problems on $[0, T\wedge\tau]$ in a progressively enlarged Brownian filtration $\Gbb$ reducing the study of the BSDEs on $[0, T\wedge\tau]$ to the one of an associated BSDEs with deterministic horizon $[0,T]$ in the reference Brownian filtration $\Fbb$. This method is often called in the literature reduction method. Following the approach of \cite{KLN13}, Jeanblanc et al.\ study in \cite{JMPR15} an exponential utility maximization problem over $[0, T\wedge\tau]$ in a progressively enlarged Brownian filtration.
In both \cite{KLN13} and \cite{JMPR15} the random time $\tau$ avoids stopping times and satisfies the immersion property.
More general BSDEs over $[0, T\wedge\tau]$ have been considered  by Aksamit, Lim and Rutkowski in the recent preprint \cite{ALR21} where $\tau$ is a random time satisfying some mild conditions.
If from one side the reduction approach, as shown in \cite{ALR21}, allows very general random times $\tau$, it only allows to solve BSDEs in $\Gbb$ on $[0,T\wedge\tau]$ and not over $[0,T]$.

In the present paper,  
%{\pa QUESTA FRASE NON HA SENSO inoltre \`e troppo dettagliata secondo me} \textcolor{red}{we first solve the optimal control problem on $[0,T\wedge\tau]$ by means of corresponding BSDEs with random horizon $T\wedge\tau$, see Theorem \ref{T:cont2_bis}.  Then, in Section \ref{S:relcontrpb}}
we show that the problem over $[0,T\wedge\tau]$ in $\Gbb$ can be solved as a problem in $\Gbb$ over the deterministic horizon $[0,T]$: We prove that the two value functions coincide, and there is an explicit relationship between the two optimal control processes, see Theorem \ref{P:5.22}.  This can be done   thanks to a martingale representation theorem in $\Gbb$  holding for all martingales and not only for martingales stopped at $\tau$, so that the BSDEs on $[0,T]$ can be globally solved. To our opinion this is for step processes the most natural approach since, as shown below, a martingale representation theorem always holds in the progressively enlargement $\Gbb$. 
%\textcolor{red}{Notice that the  optimal control process is provided by
% $
% \hat {\underline{u}}^\Theta:= \underline{u}^\Theta 1_{[0, T \wedge \tau]}
% $
%  with $\underline{u}^\Theta \in  \calc$ the optimal control process for an auxiliary  }
We remark that this approach was initiated by Di Tella in \cite{DT19}, where an expected utility maximization problem for a continuous price processes was considered.
Thanks to our method, we are also able to consider defaultable costs and we can interpret the two control problems over $[0,T]$ and $[0,T\wedge\tau]$ as explained above. We stress that in the present paper we are able to consider a more general random time $\tau$ in comparison to \cite{DT19}: The intensity of $\tau$ need not be bounded here and we  do not {necessarily} require the immersion property. Furthermore, %the optimization problem considered here is formulated for processes with jumps, while in \cite{DT19} the price process was continuous. Finally
 we can also allow the presence of  running costs, which were not present in \cite{DT19}.
%%%%%%%%%%

We now give more details on the control problems we  deal with.
They consist in minimizing a cost functional $J$ defined as the expectation of a cumulated running cost  plus a terminal cost. They are  formulated in a weak way, namely the expectation appearing in $J$ is taken with respect to a probability measure which depends on the control process itself.
%, see Section \ref{sec:appl.con} for details.
We solve these control problems following a dynamical approach based on a  class of BSDEs with  Lipschitz generators of sub linear growth and driven by a particular random measure.
More precisely, since the filtration $\Gbb$ is obtained adding to the reference filtration $\Fbb$ the point process $H=1_{[\tau,+\infty)}$, we consider a class of BSDEs driven by the jump measure $\mu\p Z$ of the semimartingale $Z=(X,H)$, which is a $\Gbb$-marked point process.
%These BSDEs have  Lipschitz generators of sub linear growth, considered in the enlarged filtration $\Gbb$.
%These backward equations present
Existence and  uniqueness of the solution of the involved BSDEs relay on the theory developed by Confortola and Fuhrman in \cite{CF13}.
 In \cite{CF13} the authors assume that the driving  marked point process is quasi-left continuous, i.e., its compensator is continuous.  This assumption is crucial: Indeed, Confortola, Fuhrman and Jacod have shown in \cite[Remark 10.1]{CFJ16} that if the compensator is not continuous, one cannot expect that the corresponding  BSDE admits a solution, in general.
On the other side, Di Tella and Jeanblanc  gave in \cite{DTJ21} a counterexample showing that  the $\Fbb$-quasi-left continuity of the marked point process $\mu\p X$ is not preserved in $\Gbb$, that is, $\mu\p Z$ need not be $\Gbb$-quasi-left continuous, see   \cite[Counterxample 4.7]{DTJ21}.
 Furthermore, in order to deal with a treatable class of control problems   we need the explicit form for the $\Gbb$-compensator $\nu\p {\Gbb, Z}$ of $\mu\p Z$, that in general is a challenging open problem.
 % it is not possible to establish the explicit form of the $\Gbb$-compensator $\nu\p Z$ of $\mu\p Z$ and,

For these reasons, in Section \ref{sec:PRP} we  assume sufficient conditions on $\tau$, namely  the {\it avoidance} of $\Fbb$-stopping times, see Definition \ref{def:ass.A} and the {\it immersion} property, see Definition \ref{def:hp.H}, or alternatively, {\it Jacod's absolutely continuity} hypotheses, see formulae \eqref{ass:jac.av}-\eqref{ass:jac}.
 Under these assumptions,  we are able to explicitly compute the $\Gbb$-compensator $\nu\p {\Gbb, Z}$ and show the $\Gbb$-quasi-left continuity of $\mu \p Z$, see Theorems \ref{thm:qlc.Z.A.H} and \ref{thm:com.G}. To the best of our knowledge, the explicit form  of $\nu\p {\Gbb, Z}$ under Jacod's absolutely continuity hypothesis is new and of independent interest.

Finally, to apply the theory for BSDEs developed in \cite{CF13}, we also need a martingale representation theorem in the enlarged filtration $\Gbb$.
{In Theorem \ref{thm:wrp.G}} we provide a martingale representation theorem {when} the filtration $\Fbb$ generated by a step process $X$ {is enlarged} by the filtration $\Hbb$ generated by a step process $H$. Taking $H=1_{[\tau,+\infty)}$, we obtain the special setting of the control problem. The results of Section \ref{sec:PRP} generalize the paper \cite{DTJ21} by Di Tella and Jeanblanc, in which $X$ and $H$ are \emph{simple point processes}, i.e.\ $\Delta X,\Delta H\in\{0,1\}$. In the {recent} preprint \cite{CT21} by Calzolari and Torti a martingale representation theorem for progressively enlarged filtrations generated by marked point processes is {also} obtained.  The results of \cite{CT21} are very general and go beyond the semimartingales context, while here we give a concise independent proof of the martingale representation theorem in the special case of step processes.

The present paper has the following structure: In Section \ref{sec:bas} we recall same basic notions. In Section \ref{sec:PRP} we obtain a martingale representation theorem in the enlarged filtration of a step process. Section \ref{sec:pred.proj} is devoted to the study of the $\Gbb$-quasi-left continuity of the step process $Z=(X,H)$, if $H=1_{[\tau,+\infty)}$, and to the computations for getting the explicit form of the compensator $\nu\p {\Gbb, Z}$. The applications to different kinds of control problems both on $[0,T]$ and $[0,T\wedge\tau]$ are presented in Section \ref{sec:appl.con},  while the proofs of technical results of Sections 4 and 5 are postponed respectively to Appendices \ref{sec:app} and \ref{A:B}.

\section{Basic Notions}\label{sec:bas}

Let $(\Om,\Fscr,\Pbb)$ be a probability space. We denote by $\Fbb=(\Fscr_t)_{t\geq0}$ a right-continuous filtration of subsets of $\Fscr$ and by $\Oscr(\Fbb)$ (resp.\ $\Pscr(\Fbb)$) the $\Fbb$-optional (resp.\ $\Fbb$-predictable) $\sig$-algebra on $\Om\times\Rbb_+$. We define $\Fscr_\infty:=\bigvee_{t\geq0}\Fscr_t$.

Let $X$ be a stochastic process. We sometimes use the notation $(X,\Fbb)$ to mean that $X$ is $\Fbb$-adapted. By $\Fbb\p X$ we denote the smallest right-continuous filtration such that $X$ is adapted. If $X$ is \cadlag, we denote by $\Delta X$ the jump process and use the convention $\Delta X_0=0$.

We say that an $\Fbb$-adapted \cadlag\ process $X$ is $\Fbb$-quasi-left continuous if $\Delta X_\sig=0$ a.s.\ for every finite-valued $\Fbb$-predictable stopping time $\sig$.

\textbf{Random measures.} For a Borel subset $E$ of $\Rbb\p d$, we introduce $\wt\Om:=\Om\times\Rbb_+\times E$ and the product $\sig$-algebras $\wt\Oscr(\Fbb):=\Oscr(\Fbb)\otimes\Bscr(E)$ and $\wt\Pscr(\Fbb):=\Pscr(\Fbb)\otimes\Bscr(E)$. If $W$ is an $\wt\Oscr(\Fbb)$-measurable (resp.\ $\wt\Pscr(\Fbb)$-measurable) mapping from $\wt\Om$ into $\Rbb$, it is called an $\Fbb$-optional (resp.\ $\Fbb$-predictable) function.

Let $\mu$ be a random measure  on $\Rbb_+\times E$ (see \cite[Definition II.1.3]{JS00}).
For a nonnegative $\Fbb$-optional function $W$, we write $W\ast\mu=(W\ast\mu_t)_{t\geq0}$, where $W\ast\mu_t(\om):=\int_{[0,t]\times E}W(\om,s,x)\mu(\om,\rmd s,\rmd x)$ is the process defined by the (Lebesgue--Stieltjes) integral of $W$ with respect to $\mu$ (see \cite[II.1.5]{JS00} for details). If $W\ast\mu$ is $\Fbb$-optional (resp.\ $\Fbb$-predictable), for every optional (resp.\ $\Fbb$-predictable) function $W$, then $\mu$ is called $\Fbb$-optional (resp.\ $\Fbb$-predictable).

\textbf{Semimartingales.}
 When we say that $X$ is a semimartingale, we always assume that it is \cadlag. For an $\Rbb\p d$-valued $\Fbb$-semimartingale $X$, we denote by $\mu\p X$ the jump measure of $X$, that is,
$
\mu\p{X}(\om,\rmd t,\rmd x)=\sum_{s>0}1_{\{\Delta X_s(\om)\neq0\}}\delta_{(s,\Delta X_s(\om))}(\rmd t,\rmd x)$, where, here and in the whole paper, $\delta_a$ denotes the Dirac measure at point $a$.
From \cite[Theorem II.1.16]{JS00}, $\mu\p X$ is an \emph{integer-valued random measure} on $\Rbb_+\times \Rbb\p d$ with respect to $\Fbb$ (see \cite[Definition II.1.13]{JS00}). Thus, $\mu\p X$ is, in particular, an $\Fbb$-optional random measure. According to \cite[Definition III.1.23]{JS00}, $\mu\p X$ is called an $\Rbb\p d$-valued \emph{marked point process} (with respect to $\Fbb$) if $\mu\p X(\om;[0,t]\times \Rbb\p d)<+\infty$, for every $\om\in\Om$ and $t\in\Rbb_+$. By $\nu\p X$ we denote the $\Fbb$-predictable compensator of $\mu\p X$ (see \cite[Definition II.2.6]{JS00}). We recall that $\nu\p X$ is a predictable random measure characterized by the following properties: For any $\Fbb$-predictable function $W$ such that $|W|\ast\mu\p X\in\Ascr\p +_\mathrm{loc}$, we have $|W|\ast\nu\p X\in\Ascr\p +_\mathrm{loc}$ and $W\ast\mu\p X-W\ast\nu\p X\in\Mloc(\Fbb)$, $\Mloc(\Fbb)$ denoting the space of $\Fbb$-local martingales and $\Ascr\p +_\mathrm{loc}(\Fbb)$ the space of $\Fbb$-adapted locally integrable \cadlag \, increasing processes starting at zero.

If $X=Y-Z$ with $Y,Z\in\Ascr\p +_\mathrm{loc}(\Fbb)$, we then write $X\in\Ascr_\mathrm{loc}(\Fbb)$. For $X\in\Ascr_\mathrm{loc}(\Fbb)$ we denote by $X\p{p,\Fbb}$ the $\Fbb$-dual predictable projection of $X$, that is the unique $\Fbb$-predictable process in $\Ascr_\mathrm{loc}(\Fbb)$ such that $X-X\p{p,\Fbb}\in\Mloc(\Fbb)$.

An $\Rbb\p d$-valued semimartingale $X$ is a step process with respect to $\Fbb$ if  it can be represented in the form $X=\sum_{n=1}\p\infty\xi_n1_{[\tau_n,+\infty)}$, where $(\tau_n)_n$ is a sequence of $\Fbb$-stopping times such that $\tau_n\uparrow+\infty$, $\tau_n<\tau_{n+1}$ on $\{\tau_n<+\infty\}$ and $(\xi_n)_{n\geq1}$ is a sequence  of $\Rbb\p d$-valued random variables such that $\xi_n$ is $\Fscr_{\tau_n}$-measurable  and $\xi_n\neq0$ if and only if $\tau_n<+\infty$ (see \cite[Definition 11.48]{HWY92}). The process $N\p X=\sum_{n=1}\p\infty1_{[\tau_n,+\infty)}$ is called \emph{the point process associated to} $X$.  If $X$ is a step process with respect to $\Fbb$, we then obviously have $\tau_n=\inf\{t>\tau_{n-1}: X_t\neq X_{\tau_{n-1}}\}$ ($\tau_0:=0$), $\xi_n=\Delta X_{\tau_n}1_{\{\tau_n<+\infty\}}$ and
\begin{equation}\label{eq:st.pr.j.mea}
\mu\p X(\rmd t,\rmd x)=\sum_{n=1}\p\infty 1_{\{\tau_n<+\infty\}}\delta_{(\tau_n,\xi_n)}(\rmd t,\rmd x).
\end{equation}

 We say that a semimartingale $X$ is a sum of jumps with respect to $\Fbb$ if $X$ is $\Fbb$ adapted, of finite variation and $X=\sum_{0\leq s\leq\cdot}\Delta X_s$. If $X$ is a sum of jumps, then  $X=\Id\ast\mu\p X$ holds, where $\Id(x):=x$. Furthermore, it
is evident, that $\mu\p X$ is a marked step process if and only if $X$ is a step process (see \cite[III.1.21 and Proposition II.1.14]{JS00}), that is, if  $X$ has finitely many jumps over compact time intervals.

\section{Martingale Representation}\label{sec:PRP}

For an $\Rbb\p d$-valued step process $(X,\Fbb\p X)$ and a $\sig$-field $\Rscr\p X$, called the \emph{initial} $\sig$-\emph{field}, we denote by $\Fbb=(\Fscr_t)_{t\geq0}$ the filtration $\Fbb\p X$ initially enlarged by $\Rscr\p X$, thus $\Fscr_t:=\Rscr\p X\vee\Fscr_t\p X$.  It is well-known that $\Fbb$ is right-continuous and clearly, $(X,\Fbb)$ is a step process. We stress that  non-trivial initial $\sigma$-field $\Rscr\p X$ allows to include in the theory developed in the present paper, without any additional effort, also step processes  with $\Fscr_0$-measurable semimartingale characteristics,  that is, step-processes with conditionally independent increments with respect to $\Fbb$ given $\Fscr_0$.

We now consider an $\Rbb\p\ell$-valued step process $H$ and introduce $\Hbb=(\Hscr_t)_{t\geq0}$ by $\Hscr_t:=\Rscr\p H\vee\Fscr\p H_t$, $t\geq0$, where $\Rscr\p H$ denotes a $\sig$-field.

 The \emph{progressive enlargement} of $\Fbb$ by $\Hbb$ we denote by $\Gbb=(\Gscr_t)_{t\geq0}$, where
\[
 \Gscr_t:=\bigcap_{s>t} \Fscr_{s}\vee\Hscr_{s} \quad t\geq0.
\]
 It is evident that $\Gbb$ is the \emph{smallest} right-continuous filtration containing $\Fbb\p X$, $\Fbb\p H$, $\Rscr\p X$ and $\Rscr\p H$ (i.e., $\Fbb$ and $\Hbb$.

 As a special example of $H$, one can take the default process associated with a random time $\tau$, i.e., $H_t(\om):=1_{\lsi \tau,+\infty\lsi}(\om,t)$, where $\tau$ is a $(0,+\infty]$-valued random variable. In this case $(H,\Hbb)$ is a point process. If $\Rscr\p H$ is trivial, $\Gbb$ is called the progressive enlargement of $\Fbb$ by $\tau$ and it is the smallest right-continuous filtration containing $\Fbb$ and such that $\tau$ is a $\Gbb$-stopping time.

We now introduce the $\Rbb\p d\times\Rbb\p\ell$-valued $\Gbb$-semimartingale $Z=(X,H)\p\top$. Clearly, $Z$ is a sum of jumps with respect to $\Gbb$, hence it is a $\Gbb$-semimartingale. The jump measure $\mu\p{Z}$ of $Z$ is an integer-valued random measure on $\Rbb_+\times \Rbb\p d\times\Rbb\p\ell$ and satisfies
\[
\mu\p{Z}(\om,\rmd t,\rmd x_1,\rmd x_2)=\sum_{s<0}1_{\{\Delta Z_s(\om)\neq0\}}\delta_{(s,\Delta Z_s(\om))}(\rmd s,\rmd x_1,\rmd x_2).
\]

\begin{theorem}\label{thm:wrp.G}
Let $(X,\Fbb\p X)$  and $(H,\Fbb\p H)$ be step processes taking values in $\Rbb\p d$ and $\Rbb\p\ell$ respectively and consider two initial $\sig$-fields $\Rscr\p X$ and $\Rscr\p H$. We define the filtrations $\Fbb$, $\Hbb$ and $\Gbb$ as above and set $Z:=(X,H)\p\top$. We then have:

\textnormal{(i)} $\mu\p Z$ is an $\Rbb\p d\times\Rbb\p\ell$-valued marked point process, that is $(Z,\Gbb)$ is a step process.

\textnormal{(ii)} $\Gbb$ is the smallest right-continuous filtration containing $\Rscr:=\Rscr\p X\vee\Rscr\p H$ and such that $\mu\p Z$ is optional.

\noindent If furthermore $\Fscr=\Gscr_\infty$, then every $Y\in\Mloc(\Gbb)$ can be represented as
\begin{equation}\label{eq:wrp.G}
Y=Y_0+W\ast\mu\p Z-W\ast\nu\p Z
\end{equation}
where $(\om,t,x_1,x_2)\mapsto W(\om,t,x_1,x_2)$ is a $\Pscr(\Gbb)\otimes\Bscr(\Rbb\p d)\otimes\Bscr(\Rbb\p\ell)$-measurable function such that $|W|\ast\mu\p Z\in\Ascr\p+_{\mathrm{loc}}(\Gbb)$ and  $\nu\p Z$ denotes the $\Gbb$-dual predictable projection of the jump measure $\mu\p Z$ of $Z$.
\end{theorem}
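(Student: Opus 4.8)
The plan is to establish (i) and (ii) first by elementary arguments about step processes and their generated filtrations, and then to derive the martingale representation \eqref{eq:wrp.G} by the classical ``chaos/atom-by-atom'' technique for marked point processes, which reduces the problem to the representation of a single $\Gscr_{T_n}$-measurable random variable between two consecutive jumps of $Z$.

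For (i), note that $Z=(X,H)^\top$ jumps exactly at the times $\{\tau_n^X\}\cup\{\tau_n^H\}$, where these denote the jump times of $X$ and $H$ respectively. Since both $X$ and $H$ are step processes, each has only finitely many jumps on every compact interval, hence so does their superposition; thus $\mu^Z(\omega;[0,t]\times\Rbb^d\times\Rbb^\ell)<+\infty$ for all $\omega,t$, which is precisely the definition of a marked point process, equivalently $(Z,\Gbb)$ is a step process (using \cite[III.1.21 and Proposition II.1.14]{JS00}). For (ii), on one hand $\mu^Z$ is $\Gbb$-optional because $Z$ is a $\Gbb$-semimartingale and the jump measure of a semimartingale is always optional w.r.t.\ the same filtration; also $\Rscr=\Rscr^X\vee\Rscr^H\subseteq\Gscr_0$. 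On the other hand, the filtration generated by $\mu^Z$ and $\Rscr$ contains $N^X$, $N^H$ and the marks $\xi_n^X$, $\xi_n^H$ at each jump time, hence it reconstructs both $X$ and $H$ as well as $\Rscr^X$ and $\Rscr^H$; therefore it contains $\Fbb^X$, $\Fbb^H$, $\Rscr^X$, $\Rscr^H$, and being right-continuous it contains $\Gbb$. Minimality follows since $\Gbb$ itself is such a filtration, giving equality.

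For the martingale representation, I would argue as follows. Let $(T_n)_{n\ge0}$ with $T_0:=0$ be the ordered sequence of jump times of $Z$ (an exhausting sequence of $\Gbb$-stopping times with $T_n\uparrow+\infty$). Fix $Y\in\Mloc(\Gbb)$; by localization we may assume $Y$ is uniformly integrable, and since $\Gscr_\infty=\Gscr=\Fscr$ we have $Y_\infty\in L^1(\Fscr)$ and $Y_t=\Ebb[Y_\infty\mid\Gscr_t]$. The key structural fact about the natural filtration of a marked point process is that $\Gscr_t$ coincides, on $\{T_n\le t<T_{n+1}\}$, with $\Gscr_{T_n}$ enriched only by the information ``no jump has occurred in $(T_n,t]$''; more precisely, any $\Gscr_t$-measurable random variable is, on this set, a measurable function of $(T_0,\xi_0,\dots,T_n,\xi_n,t)$. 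Consequently $Y$ admits, between consecutive jumps, an explicit deterministic functional form in terms of the marks observed so far, and the jump of $Y$ at each $T_{n+1}$ is a measurable function of the past marks and of $(T_{n+1},\xi_{n+1})$. Writing $\Delta Y_{T_{n+1}}=:W(T_{n+1},\Delta Z_{T_{n+1}})$ for the appropriate $\Pscr(\Gbb)\otimes\Bscr(\Rbb^d)\otimes\Bscr(\Rbb^\ell)$-measurable function $W$, the process $W\ast\mu^Z$ collects exactly the jumps of $Y$. Since $Z$ is a pure-jump (``sum of jumps'') process, $Y$ has no continuous martingale part and — this is where I would invoke the structure of the natural filtration once more — the continuous finite-variation drift of $Y$ between jumps is forced, by the martingale property, to equal $-W\ast\nu^Z$; equivalently, one checks directly that $Y-Y_0-W\ast\mu^Z+W\ast\nu^Z$ is a $\Gbb$-local martingale that is continuous and of finite variation starting from $0$, hence identically zero. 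The integrability $|W|\ast\mu^Z\in\Ascr^+_{\mathrm{loc}}(\Gbb)$ follows because $|W|\ast\mu^Z$ is the sum of absolute jumps of $Y$, dominated by the (locally integrable) total variation of the local martingale $Y$ along the exhausting sequence $(T_n)$, after localization.

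The main obstacle is the proof that the natural $\Gbb$-filtration of the pair, between consecutive jumps, genuinely carries no information beyond the observed marks and the elapsed time — i.e.\ the ``functional representation'' of $\Gscr_t$-measurable variables on $\{T_n\le t<T_{n+1}\}$ — and, relatedly, the verification that the compensated drift $-W\ast\nu^Z$ is the \emph{only} admissible continuous part. The subtlety here is that $\Gbb$ is not literally the filtration generated by $Z$ alone but by $Z$ together with the initial $\sigma$-fields $\Rscr^X,\Rscr^H$, and that $\Fbb$ and $\Hbb$ are \emph{both} present with possibly interacting jumps; one must be careful that no ``hidden'' randomness enters through the interplay of the two families. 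I would handle this by treating $\Rscr=\Rscr^X\vee\Rscr^H$ as part of an enlarged $\Gscr_0$ and observing, via part (ii), that $\Gbb$ is exactly the natural filtration of the single marked point process $\mu^Z$ over the initial $\sigma$-field $\Rscr$, thereby reducing everything to the known representation theory for marked point processes over a general initial $\sigma$-field (as in \cite[Chapter III]{JS00} or the Jacod theory of multivariate point processes), applied to $\mu^Z$.
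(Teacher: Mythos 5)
Your proposal is essentially correct and, after the exploratory middle section, lands on exactly the paper's approach: establish (i) and (ii), then invoke the classical martingale representation theorem for marked point processes from Jacod--Shiryaev. The paper's proof of (ii) is just a more explicit version of yours — it writes $X = g_1\ast\mu^Z$ and $H = g_2\ast\mu^Z$ with $g_i(x_1,x_2)=x_i$ to reconstruct $X$ and $H$ from $\mu^Z$, and cites \cite[Proposition 3.39(a)]{J79} to justify the right-continuity of the join of the raw filtration of $\mu^Z$ with $\Rscr$, a point you pass over but which is handled automatically if one defines the comparison filtration as the \emph{smallest right-continuous} one from the start, as you implicitly do. The long sketch of the atom-by-atom/chaos argument is not wrong in spirit, but it is unnecessary: you correctly identify at the end that once (ii) shows $\Gbb$ is precisely the natural right-continuous filtration of the single marked point process $\mu^Z$ over $\Rscr=\Rscr^X\vee\Rscr^H$ (with $\Gscr_\infty=\Fscr$), the representation \eqref{eq:wrp.G} is immediate from \cite[Theorem III.4.37]{JS00}, which is all the paper does; the functional-representation claim you flag as the ``main obstacle'' (that on $\{T_n\le t<T_{n+1}\}$ any $\Gscr_t$-measurable variable is a function of $(T_0,\xi_0,\dots,T_n,\xi_n,t)$) also needs to carry the $\Rscr$-measurable initial data, and would need more care when $X$ and $H$ jump simultaneously, so it is good that you ultimately sidestep it.
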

\begin{proof}
We start proving (i). Since $Z$ is a sum of jumps, it is sufficient to show that $\mu\p Z$ is a marked point process with respect to $\Gbb$. To this aim, we observe that $\mu\p X$ and $\mu\p H$ are an $\Rbb\p d$-valued and an $\Rbb\p \ell$-valued marked point process with respect to $\Gbb$, respectively, $(X,\Gbb)$ and $(H,\Gbb)$ being an $\Rbb\p d$-valued and an $\Rbb\p \ell$-valued step processes, respectively. Therefore, we have \[
\begin{split}
\mu\p Z([0,t]\times\Rbb\p d\times\Rbb\p\ell)
=
\sum_{0\leq s\leq t}1_{\{\Delta Z_s\neq0\}}
&=
\sum_{0\leq s\leq t}1_{\{\Delta X_s\neq0\}\cup\{\Delta H_s\neq0\}}
\\&\leq
\sum_{0\leq s\leq t}(1_{\{\Delta X_s\neq0\}}+1_{\{\Delta H_s\neq0\}})
\\&=\mu\p X([0,t]\times\Rbb\p d)+\mu\p H([0,t]\times\Rbb\p\ell)<+\infty,
\end{split}
\]
meaning that $\mu\p Z$ is an $\Rbb\p d\times\Rbb\p \ell$-valued marked point process with respect to $\Gbb$. This concludes the proof of (i). We now come to (ii). Let us denote by $\Gbb\p\prime$ the smallest right continuous filtration such that $\mu\p{Z}$ is optional. We first show the identity $\Gbb\p\prime=\Fbb\p{Z}$. Since $Z$ is an $\Fbb\p{Z}$-semimartingale, $\mu\p{Z}$ is an $\Fbb\p{Z}$-optional integer-valued random measure. So, $\Gbb\p\prime\subseteq\Fbb\p{Z}$ holds. We now show the converse inclusion $ \Gbb\p\prime\supseteq\Fbb\p{}$. We denote
$g_1(x_1,x_2)=x_1$ and $g_2(x_1,x_2)=x_2$. By definition of $\mu\p{Z}$ we have
\[
\begin{split}
|g_1|\ast\mu\p{Z}_t&=\sum_{s\leq t}|\Delta X_s|1_{\{\Delta Z\neq0\}}
\\&=
\sum_{s\leq t}|\Delta X_s|\big(1_{\{\Delta X_s\neq0,\Delta H_s=0\}}+1_{\{\Delta X_s=0,\Delta H_s\neq0\}}+1_{\{\Delta X_s\neq0,\Delta H_s\neq0\}}\big)
\\&=
\sum_{s\leq t}|\Delta X_s|1_{\{\Delta X_s\neq0\}}\leq\Var(X)_t<+\infty,
\end{split}
\]
 where $\Var(X)_t(\om)$ denotes the total variation of $s\mapsto X_s(\om)$ on $[0,t]$. Hence, the integral $g_1\ast\mu\p{Z}$ is well defined and satisfies $X=g_1\ast\mu\p{Z}$.  Analogously,  $H=g_2\ast\mu\p{Z}$ holds. This yields that $X$ and $H$ are $\Gbb\p\prime$-optional processes. Since $\Gbb\p\prime$ is right-continuous, we get $\Gbb\p\prime\supseteq\Fbb\p{Z}$. From \cite[Proposition 3.39 (a)]{J79} the filtration $\Rscr\vee\Gbb\p\prime$ is right-continuous. Therefore,  $\Rscr\vee\Gbb\p\prime$  and $\Gbb$ coincide: They are both the smallest right continuous filtrations containing $\Fbb\p X$, $\Fbb\p H$, $\Rscr\p X$ and $\Rscr\p H$. The proof of (ii) is complete. We now come to \eqref{eq:wrp.G}. If we assume $\Fscr=\Gscr_\infty$, this is an immediate consequence of (i), (ii) and  \cite[Theorem III.4.37]{JS00}. The proof is complete.
\end{proof}
As an application of Theorem \ref{thm:wrp.G}, we can easily show by induction the following result.
\begin{corollary}\label{cor:seq.en}
We consider the $\Rbb\p{d_i}$-valued step processes $(X\p i,\Fbb\p{X\p i})$ and the initial $\sigma$-fields $\Rscr\p{X\p i}$, $i=1,\ldots,n$. We set $\Fbb\p i:=\Fbb\p{X\p i}\vee\Rscr\p{X\p i}$ and denote by $\Gbb$ the smallest right-continuous filtration containing $\{\Fbb\p i,\ i=1,\ldots,n\}$. Then the $E:=\Rbb\p{d_1}\times\cdots\times\Rbb\p{d_n}$-valued semimartingale $Z=(X\p1,\ldots, X\p n)\p\top $ satisfies:

\textnormal{(i)} $\mu\p Z$ is an $E$-valued marked point process with respect to $\Gbb$, that is $Z$ is an $E$-valued step process with respect to $\Gbb$.

\textnormal{(ii)} $\Gbb$ is the smallest right continuous filtration containing $\Rscr:=\bigvee_{i=1}\p n\Rscr\p{X\p i}$ and such that $\mu\p Z$ is an optional random measure.

\noindent If furthermore $\Fscr=\Gscr_\infty$, then every $Y\in\Mloc(\Gbb)$ can be represented as
\begin{equation}\label{eq:wrp.Gn}
Y=Y_0+W\ast\mu\p Z-W\ast\nu\p Z
\end{equation}
where $(\om,t,x_1,\ldots,x_n)\mapsto W(\om,t,x_1,\ldots,x_n)$ is a $\Pscr(\Gbb)\otimes\Bscr(E)$-measurable function such that $|W|\ast\mu\p Z\in\Ascr\p+_{\mathrm{loc}}(\Gbb)$ and  $\nu\p Z$ denotes the $\Gbb$-dual predictable projection of the jump measure $\mu\p Z$ of $Z$.
\end{corollary}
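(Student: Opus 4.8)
The plan is to reduce Corollary \ref{cor:seq.en} to Theorem \ref{thm:wrp.G} by an induction on the number $n$ of step processes. The base case $n=1$ is essentially trivial: with $Z=X\p1$ the filtration $\Gbb$ equals $\Fbb\p1=\Fbb\p{X\p1}\vee\Rscr\p{X\p1}$, and the three assertions are just the corresponding statements for a single step process initially enlarged by a $\sigma$-field, which is precisely the setting of Theorem \ref{thm:wrp.G} with $H$ absent (or, formally, with $H\equiv0$ and $\Rscr\p H$ trivial). So the content is entirely in the inductive step.

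For the inductive step, suppose the statement holds for $n-1$ step processes, and set $\tilde X:=(X\p1,\ldots,X\p{n-1})\p\top$, with $\tilde\Gbb$ the smallest right-continuous filtration containing $\{\Fbb\p i: i=1,\ldots,n-1\}$ and $\tilde\Rscr:=\bigvee_{i=1}\p{n-1}\Rscr\p{X\p i}$. By the induction hypothesis, $\tilde X$ is an $\tilde E:=\Rbb\p{d_1}\times\cdots\times\Rbb\p{d_{n-1}}$-valued step process with respect to $\tilde\Gbb$, and $\tilde\Gbb$ is the smallest right-continuous filtration for which $\mu\p{\tilde X}$ is optional and which contains $\tilde\Rscr$. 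Now I want to apply Theorem \ref{thm:wrp.G} with the role of $(X,\Fbb\p X,\Rscr\p X)$ played by $(\tilde X,\tilde\Gbb,\cdot)$ and the role of $(H,\Fbb\p H,\Rscr\p H)$ played by $(X\p n,\Fbb\p{X\p n},\Rscr\p{X\p n})$. The one point that needs care is that Theorem \ref{thm:wrp.G} is stated for the filtration generated by a step process enlarged by an \emph{initial} $\sigma$-field, whereas $\tilde\Gbb$ is not literally of the form $\Fbb\p{\tilde X}\vee\Rscr\p{\tilde X}$ — it carries the initial $\sigma$-fields $\Rscr\p{X\p i}$ but also accumulates information progressively from the several coordinates. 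However, by part (ii) of the induction hypothesis, $\tilde\Gbb$ \emph{is} the smallest right-continuous filtration containing the initial $\sigma$-field $\tilde\Rscr$ for which the jump measure $\mu\p{\tilde X}$ of the $\tilde\Gbb$-step process $\tilde X$ is optional; this is exactly the structural hypothesis used in the proof of Theorem \ref{thm:wrp.G} (via \cite[Theorem III.4.37]{JS00} and \cite[Proposition 3.39(a)]{J79}), so the argument there applies verbatim with $\Fbb\p X$ replaced by $\Fbb\p{\tilde X}$ and $\Rscr\p X$ by $\tilde\Rscr$. I would make this explicit by noting that the proof of Theorem \ref{thm:wrp.G} only uses that $X$ is a step process for the ambient filtration and that the ambient filtration is generated (in the right-continuous sense) by $\Rscr\p X$ together with $\mu\p X$.

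Granting that, the inductive step runs as follows. First, $Z=(\tilde X,X\p n)\p\top$ and the smallest right-continuous filtration $\Gbb$ containing $\{\Fbb\p i:i=1,\ldots,n\}$ equals the smallest right-continuous filtration containing $\tilde\Gbb$ and $\Fbb\p{X\p n}$, hence equals the progressive enlargement of $\tilde\Gbb$ by $\Fbb\p{X\p n}$ in the sense of Section \ref{sec:PRP}; this is just associativity of "smallest right-continuous filtration containing". Then part (i) of Theorem \ref{thm:wrp.G} gives that $\mu\p Z$ is an $\tilde E\times\Rbb\p{d_n}=E$-valued marked point process with respect to $\Gbb$, i.e.\ $Z$ is an $E$-valued $\Gbb$-step process, which is Corollary \ref{cor:seq.en}(i). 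Part (ii) gives that $\Gbb$ is the smallest right-continuous filtration containing $\tilde\Rscr\vee\Rscr\p{X\p n}=\Rscr$ for which $\mu\p Z$ is optional, which is Corollary \ref{cor:seq.en}(ii). Finally, under $\Fscr=\Gscr_\infty$, the martingale representation \eqref{eq:wrp.Gn} is \eqref{eq:wrp.G} rewritten with the product mark space $E$ and the $\Pscr(\Gbb)\otimes\Bscr(E)$-measurable integrand $W(\om,t,x_1,\ldots,x_n)$, the $\Gbb$-dual predictable projection $\nu\p Z$ being the compensator of $\mu\p Z$ exactly as in the theorem. The main obstacle — really the only non-bookkeeping issue — is the first point of the previous paragraph: checking that Theorem \ref{thm:wrp.G} may legitimately be fed a filtration like $\tilde\Gbb$ that is not presented in the initial-enlargement-of-a-single-step-process form, which is resolved by observing that its proof only needs the characterization of $\tilde\Gbb$ supplied by the induction hypothesis. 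Everything else is routine identification of mark spaces and $\sigma$-algebras.
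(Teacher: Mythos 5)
Your proof is correct and follows the paper's intended approach (the paper offers no explicit proof, only the remark that the corollary follows by induction from Theorem \ref{thm:wrp.G}). The concern you flag about feeding $\tilde\Gbb$ into Theorem \ref{thm:wrp.G} actually dissolves entirely: part (ii) of the induction hypothesis, combined with the observation made inside the proof of Theorem \ref{thm:wrp.G} (valid for any step process) that the smallest right-continuous filtration making $\mu\p{\tilde X}$ optional is precisely $\Fbb\p{\tilde X}$, yields $\tilde\Gbb=\Fbb\p{\tilde X}\vee\tilde\Rscr$, so the theorem's hypothesis is satisfied literally and no re-examination of its proof is needed.
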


\section{The $\Gbb$-dual predictable projection}\label{sec:pred.proj}
Consider an $\Rbb\p\ell$-valued step process $H$ and introduce $\Hbb=(\Hscr_t)_{t\geq0}$ by $\Hscr_t:=\Rscr\p H\vee\Fscr\p H_t$, $t\geq0$, where $\Rscr\p H$ denotes a $\sig$-field. We denote by $\Gbb=(\Gscr_t)_{t\geq0}$ the progressive enlargement of $\Fbb$ by $\Hbb$.

In this section we shall concentrate on the special case $H=1_{[\tau,+\infty)}$, where $\tau$ denotes a random time: In particular, we aim to establish sufficient conditions to ensure the $\Gbb$-quasi-left continuity of $Z=(X,H)$. To this end we need to determine the form of the $\Gbb$-dual predictable projection $\nu\p{\Gbb,Z}$ of $\mu\p Z$. The next result, which holds for general step processes, gives the structure of $\nu\p{\Gbb,Z}$ if $H$ and $X$ have no common jumps. Its proof is postponed to the Appendix \ref{sec:app}.

\begin{theorem}\label{T:Zcomp}
 Let $(X, \Fbb)$ be  an $\R^d$-valued step-process and let $(H, \Hbb)$ be an $\R^\ell$-valued step-process. If $\Delta X \Delta H=0$, then  the following identities hold for the $\R^d\times\R\p\ell$-valued $\Gbb$-step process $Z=(X,H)$:
	
		\textnormal {(i)} $\mu^Z(\om, \rmd t, \rm x_1, \rmd x_2)=\mu^X(\om, \rmd t, \rmd x_1) \delta_0(\rmd x_2) +\mu^H(\om, \rmd t, \rmd x_2) \delta_0(\rmd x_1)$.
		
		\textnormal {(ii)} $\nu^{\Gbb,Z}(\om, \rmd t, \rmd x_1, \rmd x_2)= \nu^{\Gbb,X}(\om, \rmd t, \rmd x_1)\delta_0(\rmd x_2)+\nu^{\Gbb,H}(\om, \rmd t, \rmd x_2)\delta_0(\rmd x_1)$.
\end{theorem}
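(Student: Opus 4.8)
The plan is to establish both identities by working directly with the jump structure of $Z$ and then, for the compensator, characterising $\nu^{\Gbb,Z}$ via the defining martingale property applied to a rich enough class of test functions.

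First I would prove (i). Since $Z=(X,H)^\top$ is a sum of jumps with respect to $\Gbb$, its jump measure is $\mu^Z=\sum_{s>0}1_{\{\Delta Z_s\neq0\}}\delta_{(s,\Delta X_s,\Delta H_s)}$. Using $\Delta X\,\Delta H=0$, at any time $s$ with $\Delta Z_s\neq0$ exactly one of $\Delta X_s,\Delta H_s$ is nonzero. Hence $\{\Delta Z_s\neq0\}=\{\Delta X_s\neq0,\Delta H_s=0\}\uplus\{\Delta X_s=0,\Delta H_s\neq0\}$ (a disjoint union), and the atom sits at $(s,\Delta X_s,0)$ in the first case and at $(s,0,\Delta H_s)$ in the second. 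Splitting the sum accordingly, the first family of atoms is exactly $\mu^X(\rmd t,\rmd x_1)\,\delta_0(\rmd x_2)$ restricted to $\{x_1\neq0\}$ and the second is $\mu^H(\rmd t,\rmd x_2)\,\delta_0(\rmd x_1)$ restricted to $\{x_2\neq0\}$; since $\mu^X$ and $\mu^H$ by construction (see \eqref{eq:st.pr.j.mea}) charge only $\{x_1\neq0\}$ and $\{x_2\neq0\}$ respectively, we obtain precisely the stated identity. This part is essentially bookkeeping.

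For (ii), the strategy is to show that the measure $\bar\nu(\rmd t,\rmd x_1,\rmd x_2):=\nu^{\Gbb,X}(\rmd t,\rmd x_1)\delta_0(\rmd x_2)+\nu^{\Gbb,H}(\rmd t,\rmd x_2)\delta_0(\rmd x_1)$ has the two characterising properties of the $\Gbb$-dual predictable projection of $\mu^Z$: it is $\Gbb$-predictable, and for every $\Gbb$-predictable function $W$ on $\wt\Om$ (with values in $\Rbb^d\times\Rbb^\ell$) with $|W|\ast\mu^Z\in\Ascr^+_{\mathrm{loc}}(\Gbb)$ one has $|W|\ast\bar\nu\in\Ascr^+_{\mathrm{loc}}(\Gbb)$ and $W\ast\mu^Z-W\ast\bar\nu\in\Mloc(\Gbb)$; uniqueness of the dual predictable projection then forces $\nu^{\Gbb,Z}=\bar\nu$. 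Predictability of $\bar\nu$ is immediate from predictability of $\nu^{\Gbb,X}$ and $\nu^{\Gbb,H}$ (here one uses that $\mu^X$ and $\mu^H$ are $\Gbb$-marked point processes, since $X$ and $H$ are $\Gbb$-step processes, so their $\Gbb$-compensators exist and are $\Gbb$-predictable). For the martingale property, decompose a test function $W(\om,t,x_1,x_2)$ using (i): since $\mu^Z$ charges only the set $\{x_2=0,x_1\neq0\}\cup\{x_1=0,x_2\neq0\}$, we may write $W\ast\mu^Z=W_1\ast\mu^X+W_2\ast\mu^H$ where $W_1(\om,t,x_1):=W(\om,t,x_1,0)1_{\{x_1\neq0\}}$ and $W_2(\om,t,x_2):=W(\om,t,0,x_2)1_{\{x_2\neq0\}}$ are $\Gbb$-predictable functions, and correspondingly $W\ast\bar\nu=W_1\ast\nu^{\Gbb,X}+W_2\ast\nu^{\Gbb,H}$. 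Then $W\ast\mu^Z-W\ast\bar\nu=(W_1\ast\mu^X-W_1\ast\nu^{\Gbb,X})+(W_2\ast\mu^H-W_2\ast\nu^{\Gbb,H})$, and each bracket is a $\Gbb$-local martingale by the very definition of $\nu^{\Gbb,X}$ and $\nu^{\Gbb,H}$ as $\Gbb$-dual predictable projections — provided the local integrability $|W_i|\ast\mu^{X}\in\Ascr^+_{\mathrm{loc}}(\Gbb)$ (resp. $H$) holds, which follows from $|W_i|\ast\mu^{X}\le|W|\ast\mu^Z$ (resp. $H$) together with $|W|\ast\mu^Z\in\Ascr^+_{\mathrm{loc}}(\Gbb)$ and the fact that a process dominated by a locally integrable increasing process is itself locally integrable.

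The main obstacle, and the only genuinely delicate point, is the bookkeeping around null sets and the decomposition of a \emph{general} $\Gbb$-predictable test function $W$: one must check that $W_1$ and $W_2$ as defined are themselves $\Pscr(\Gbb)\otimes\Bscr$-measurable (this is clear, being compositions of $W$ with measurable maps), and more importantly that $W\ast\mu^Z = W_1\ast\mu^X + W_2\ast\mu^H$ holds identically as processes — this requires knowing that $\mu^X$ and $\mu^H$ never charge the atom $x=0$, which is part of their construction for step processes, and that the three jump events $\{\Delta X_s\neq0,\Delta H_s=0\}$, $\{\Delta X_s=0,\Delta H_s\neq0\}$, $\{\Delta X_s\neq0,\Delta H_s\neq0\}$ partition $\{\Delta Z_s\neq0\}$ with the third being empty under the hypothesis $\Delta X\,\Delta H=0$. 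Once this is in place, everything else reduces to the defining properties of dual predictable projections and their uniqueness, with no hard analysis required.
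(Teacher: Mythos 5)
Your proposal is correct and follows essentially the same path as the paper: split the atoms of $\mu^Z$ using the disjointness of $\{\Delta X\neq0\}$ and $\{\Delta H\neq0\}$ to obtain (i), then verify the characterising properties of the dual predictable projection for the candidate measure $\bar\nu$ by decomposing $W\ast\mu^Z$ into $W_1\ast\mu^X+W_2\ast\mu^H$ (the paper's only cosmetic difference is that it first treats nonnegative $W$ and then passes to general $W$ via $W^+,W^-$, whereas you work with absolute values directly).
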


\subsection{Progressive enlargement by a random time}\label{sec:prog.en.rt}
We now denote by $\Gbb$ the progressive enlargement of $\Fbb$ by a random time $\tau:\Om\longrightarrow(0,+\infty]$, that is, $\Gbb$ is the smallest right-continuous filtration containing $\Fbb$ and such that $\tau$ is a $\Gbb$-stopping time.

We denote by $H=1_{[\tau,+\infty)}$ the default process of $\tau$ and by $A={}\p o(1-H)={}\p o1_{[0,\tau)}$ the $\Fbb$-optional projection of $(1-H)=1_{[0,\tau)}$  (see \cite[Theorem V.14 and V.15]{D72}). The process $A$ is a  \cadlag  $\Fbb$-supermartingale, called Az\'ema supermartingale, satisfying $A_t=\Pbb[\tau>t|\Fscr_t]$ a.s., for every $t\geq0$. Let $m$ be the martingale defined by $m_t=\Ebb[H\p o+1_{\{\tau=+\infty\}}|\Fscr_t]$ a.s., $t\geq0$, where $H\p o$ denotes the $\Fbb$-dual optional projection of $H$. The martingale $m$ belongs to the class $BMO$ with respect to $\Fbb$ (see \cite[Proposition 1.49]{AJ17}) and $A$ has the following $\Fbb$-optional decomposition: $A=m-H\p o$. It is well-known that $\{A_->0\}\subseteq[0,\tau]$ holds (see, e.g., \cite[Lemma 2.14]{AJ17}), so the process $\frac1{A_-}1_{[0,\tau]}$ is well defined. For every $\Fbb$-local martingale $Y$, the process $[Y,m]$ belongs to $\Ascr_\mathrm{loc}(\Fbb)$.  Therefore, the $\Fbb$-dual predictable projection $[Y,m]\p{p,\Fbb}$ of $[Y,m]$ is well defined and we set $\aPP{Y}{m}\p{\Fbb}:=[Y,m]\p{p,\Fbb}$.

 The $\Gbb$-dual predictable projection of $H$ is denoted by $\Lm\p\Gbb$ and, by \cite[Proposition 2.15]{AJ17} it satisfies
\begin{equation}\label{LambdaG}
	\Lm\p\Gbb=\int_0\p{\tau\wedge\cdot}\frac{1}{A}_{s-}\rmd H_s\p{p,\Fbb},
\end{equation}
$H\p{p,\Fbb}$ denoting the $\Fbb$-dual predictable projection of $H$.

Because of the special structure of the enlarged filtration, the following result holds, whose proof is given in   Appendix \ref{sec:app}.
\begin{lemma}\label{lem:dec.pred.fun}
Let  $(\om,t,x)\mapsto W(\om,t,x)$ be a $\Gbb$-predictable function. Then, there exists an $\Fbb$-predictable function $(\om,t,x)\mapsto \ol W(\om,t,x)$ such that $W(\om,t,x)1_{[0,\tau]}(\om,t)=\ol W(\om,t,x)1_{[0,\tau]}(\om,t)$. If furthermore $W$ is bounded, then $\ol W$ is bounded too.
\end{lemma}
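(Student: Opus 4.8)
The plan is to reduce the statement to the classical fact that, in the progressive enlargement of $\Fbb$ by a random time $\tau$, the trace of $\Pscr(\Gbb)$ on the closed stochastic interval $[0,\tau]$ coincides with the trace of $\Pscr(\Fbb)$ on it; equivalently, every $\Gbb$-predictable process $Y$ admits an $\Fbb$-predictable process $\ol Y$ with $Y1_{[0,\tau]}=\ol Y1_{[0,\tau]}$ (see, e.g., \cite{AJ17} or \cite{Jeu80}). The reason is that $\Pscr(\Gbb)$ is generated by the sets $\{0\}\times G_0$ with $G_0\in\Gscr_0$ and $(s,t]\times G$ with $G\in\Gscr_s$, and on $[0,\tau]$ one may replace such a $G$ by an $\Fscr_s$-measurable set $\ol G$ with $G\cap\{s<\tau\}=\ol G\cap\{s<\tau\}$, since any $(\om,u)\in(s,t]\times G$ with $u\le\tau(\om)$ forces $s<\tau(\om)$. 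The rest is lifting this from processes to functions by a functional monotone class argument, and then handling boundedness by truncation.

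Concretely, let $\Hscr$ be the set of bounded $\Pscr(\Gbb)\otimes\Bscr(E)$-measurable functions $W$ for which there is an $\Fbb$-predictable function $\ol W$ with $W1_{[0,\tau]}=\ol W1_{[0,\tau]}$, where $E$ denotes the relevant Borel mark space. If $W(\om,t,x)=Y(\om,t)f(x)$ with $Y$ bounded $\Gbb$-predictable and $f$ bounded Borel, the process-level result gives an $\Fbb$-predictable $\ol Y$ with $Y1_{[0,\tau]}=\ol Y1_{[0,\tau]}$, and then $\ol W:=\ol Y f$ is an $\Fbb$-predictable function with $W1_{[0,\tau]}=\ol W1_{[0,\tau]}$; hence $\Hscr$ contains the multiplicative class $\{Yf\}$, which generates $\Pscr(\Gbb)\otimes\Bscr(E)$. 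Clearly $\Hscr$ is a vector space containing the constants. If $0\le W_n\uparrow W$ with $W_n\in\Hscr$ and associated $\ol W_n$, set $\ol W:=\limsup_n\ol W_n$, an $\Fbb$-predictable function; since $1_{[0,\tau]}$ is $\{0,1\}$-valued, $\ol W1_{[0,\tau]}=\limsup_n(\ol W_n1_{[0,\tau]})=\limsup_n(W_n1_{[0,\tau]})=W1_{[0,\tau]}$, so $W\in\Hscr$. By the functional monotone class theorem \cite[Theorem 1.4]{HWY92}, $\Hscr$ contains all bounded $\Pscr(\Gbb)\otimes\Bscr(E)$-measurable functions. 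For a general $\Gbb$-predictable function $W$, apply this to $W_n:=(-n)\vee(W\wedge n)$ to obtain $\ol W_n$, and put $\ol W:=\limsup_n\ol W_n$; as $W_n\to W$ pointwise, the same $\{0,1\}$-valued argument yields $W1_{[0,\tau]}=\ol W1_{[0,\tau]}$.

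Finally, if $|W|\le C$ everywhere, replace the $\ol W$ just obtained by $(-C)\vee(\ol W\wedge C)$, which is again $\Fbb$-predictable and bounded by $C$; on $[0,\tau]$ one has $\ol W=W$ and $|W|\le C$, so the truncation does not alter $\ol W$ there, and the identity $W1_{[0,\tau]}=\ol W1_{[0,\tau]}$ is preserved. The only points requiring care are the bookkeeping around the \emph{closed} interval $[0,\tau]$ (one genuinely needs $G\cap\{s<\tau\}=\ol G\cap\{s<\tau\}$) and the closure of $\Hscr$ under monotone limits via the $\limsup$ construction; the substantive input, the coincidence of $\Pscr(\Gbb)$ and $\Pscr(\Fbb)$ on $[0,\tau]$, is classical.
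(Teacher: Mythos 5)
Your proof is correct and takes essentially the same route as the paper: both reduce to Jeulin's result that a $\Gbb$-predictable process coincides with an $\Fbb$-predictable one on $[0,\tau]$, then extend to functions by a monotone class argument on products $Y(\om,t)f(x)$, followed by truncation/approximation for the unbounded case and for preserving a bound. The extra detail you supply — the $\limsup$ device for closure under monotone limits and the explicit truncation $(-C)\vee(\ol W\wedge C)$ — is sound and just makes explicit what the paper leaves to the reader.
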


\subsection{Quasi-left continuity in the enlarged filtration}\label{subs:qlc.en}
 Let $X\in\Ascr_\mathrm{loc}(\Fbb)$  be an $\Fbb$-quasi left continuous step process. We are interested in the following question: Is $X$ $\Gbb$-quasi left continuous? The reason to consider this problem is that to solve the BSDEs associated to the control problems studied in Section \ref{sec:appl.con}, we want to apply the theory developed in \cite{CF13} and, to this aim, it is important that the $\Gbb$-step process $Z=(X,H)$ is $\Gbb$-quasi-left continuous.
However, in general, this is not true: Intuitively, the larger filtration $\Gbb$ supports more predictable stopping times than $\Fbb$. To see this we recall \cite[Counterexample 4.8]{DTJ21}:
\begin{cexample}\label{cex:no.qlc}  Let $X$ be a homogeneous Poisson process with respect to $\Fbb\p X$ and let $(\tau_n)_{n\geq 1}$ be the sequence of the jump-times of $\Fbb\p X$. The process $X$ is not quasi-left continuous in the filtration $\Gbb$ obtained enlarging $\Fbb\p X$ progressively by the random time $\tau=\frac{1}{2}(\tau_1+\tau_2)$. Indeed, the jump-time $\tau_2$ of $X$ is announced in $\Gbb$ by $(\vartheta_n)_{n\geq1}$, $\vartheta_n:=\frac1n\tau+(1-\frac1n)\tau_2$, and $\vartheta_n>\tau$ is a $\Gbb$-stopping time for every $n\geq1$ by \cite[Theorem III.16]{D72}. Hence, $\tau_2$ is a $\Gbb$-predictable jump-time of $X$.
\end{cexample}
Therefore, we need to state sufficient conditions of $\tau$ to ensure the $\Gbb$-quasi-left continuity of $Z$. Notice that the quasi-left continuity of $X$ can get lost only over $(\tau,+\infty]$, as the following result shows. The proof is given in  Appendix \ref{sec:app}.
\begin{proposition}\label{prop:qlc.stop}
Let $X\in\Ascr_\mathrm{loc}(\Fbb)$ be an $\Fbb$-quasi-left continuous step process. Then, the $\Gbb$-adapted stopped process $X\p\tau$ (defined by $X\p\tau_t:=X_{t\wedge\tau}$, $t\geq0$) is $\Gbb$-quasi-left continuous.
\end{proposition}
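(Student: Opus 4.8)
The plan is to show that the only $\Gbb$-predictable stopping times that could witness a failure of $\Gbb$-quasi-left continuity of $X^\tau$ live on $(\tau,+\infty]$, where $X^\tau$ is constant, so that no jump of $X^\tau$ can actually occur at such a time. First I would recall the general structure of $\Gbb$-predictable stopping times for a progressive enlargement by a random time: every $\Gbb$-predictable stopping time $\sigma$ admits the decomposition $\sigma=\sigma'\wedge\sigma''$ where $\sigma'$ is an $\Fbb$-predictable stopping time and $\llbracket\sigma''\rrbracket\subseteq\,]\!]\tau,+\infty]\!]$ (this is the predictable analogue of the classical splitting of $\Gbb$-stopping times at $\tau$; it follows from Lemma \ref{lem:dec.pred.fun} applied to the predictable set $\llbracket 0,\sigma\llbracket$, together with right-continuity of $\Gbb$). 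On the event $\{\sigma=\sigma'\}$ we are at a finite $\Fbb$-predictable stopping time, and on $\{\sigma=\sigma''\}$ we have $\sigma>\tau$.

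Next I would split the computation of $\Delta X^\tau_\sigma$ accordingly. On $\{\sigma>\tau\}$ one has $X^\tau_\sigma=X_\tau=X^\tau_{\sigma-}$, hence $\Delta X^\tau_\sigma=0$ there trivially, since $X^\tau$ has already been frozen at time $\tau<\sigma$. On the complementary event we may assume $\sigma=\sigma'$ is a finite $\Fbb$-predictable stopping time; moreover on $\{\sigma=\sigma'\le\tau\}$ we have $\Delta X^\tau_\sigma=\Delta X_\sigma\,1_{\{\sigma\le\tau\}}$. Since $\sigma'$ is $\Fbb$-predictable and $X$ is $\Fbb$-quasi-left continuous, $\Delta X_{\sigma'}=0$ a.s., and therefore $\Delta X^\tau_\sigma=0$ a.s.\ on this event as well. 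Combining the two cases gives $\Delta X^\tau_\sigma=0$ a.s.\ for every finite-valued $\Gbb$-predictable stopping time $\sigma$, which is exactly $\Gbb$-quasi-left continuity of $X^\tau$.

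The main obstacle is establishing the decomposition of $\Gbb$-predictable stopping times cleanly in the present setting, where $\Rscr^H$ (hence $\Hbb$) need not be trivial and $\Fbb$ itself carries an initial $\sigma$-field $\Rscr^X$. Rather than invoking an external reference verbatim, I would argue directly: the predictable $\sigma$-algebra $\Pscr(\Gbb)$ is generated by $\Pscr(\Fbb)$ together with the single process $1_{\rrbracket\tau,+\infty\rrbracket}$ on the stochastic interval $\llbracket 0,\sigma\llbracket$, and on $[\![0,\tau]\!]$ Lemma \ref{lem:dec.pred.fun} lets one replace any $\Gbb$-predictable object by an $\Fbb$-predictable one. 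Concretely, $C:=\llbracket 0,\sigma\llbracket\cap\llbracket 0,\tau\rrbracket$ agrees with $\bar C\cap\llbracket 0,\tau\rrbracket$ for some $\Fbb$-predictable set $\bar C$; taking $\sigma'$ to be the début (in the predictable sense, i.e.\ the left endpoint) of the complement of $\bar C$ yields an $\Fbb$-predictable stopping time with $\sigma\wedge\tau=\sigma'\wedge\tau$, and on $\{\sigma<\tau\}\cup(\{\sigma=\tau\}\cap\cdots)$ one checks $\sigma=\sigma'$, while on the remaining event $\sigma>\tau$. The bookkeeping at the single time $\{\sigma=\tau\}$ is the only delicate point; since $\tau$ is a $\Gbb$-totally inaccessible time in the relevant cases — or at any rate $\{\sigma=\tau\}$ contributes a jump of $X^\tau$ only through $\Delta X_\tau$, which can be absorbed into the $\sigma=\sigma'$ analysis — this causes no loss. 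I would then present this decomposition as a short lemma and reduce Proposition \ref{prop:qlc.stop} to it in the two-line argument above.
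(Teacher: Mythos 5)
Your case-split is the right idea and the overall strategy is genuinely different from the paper's, but the construction of the $\Fbb$-predictable stopping time $\sigma'$ has a gap. You take $\sigma'$ to be the d\'ebut of the $\Fbb$-predictable set $\bar C^c$; the d\'ebut of a predictable set is an $\Fbb$-stopping time, but it need \emph{not} be $\Fbb$-\emph{predictable}. For instance $\bar C=\llbracket 0,T\rrbracket$ is $\Fbb$-predictable for any $\Fbb$-stopping time $T$, yet the d\'ebut of its complement is $T$ itself, which is not predictable when $T$ is totally inaccessible. Since the whole point is to feed $\sigma'$ into the $\Fbb$-quasi-left continuity of $X$, this step must be repaired. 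The hedging about $\{\sigma=\tau\}$ (``since $\tau$ is $\Gbb$-totally inaccessible in the relevant cases'') cannot stand either: the proposition assumes nothing about $\tau$, and $\tau$ may well be $\Gbb$-accessible or even $\Gbb$-predictable.

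The decomposition you want is nevertheless true, but one has to argue via announcing sequences rather than d\'ebuts. Take a strictly announcing sequence $\sigma_n\uparrow\sigma$ of $\Gbb$-stopping times; by the classical decomposition of $\Gbb$-stopping times at $\tau$ (see \cite[Prop.\ 2.11]{AJ17} or \cite{Jeu80}) there are $\Fbb$-stopping times $\sigma_n'$ with $\sigma_n=\sigma_n'$ on $\{\sigma_n<\tau\}$. Set $\rho_n:=\sigma_1'\vee\cdots\vee\sigma_n'$, $\rho:=\sup_n\rho_n$, and let $B\in\Fscr_{\rho-}$ be the set where $(\rho_n)$ strictly announces $\rho$; then $\rho_B$ is $\Fbb$-predictable. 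On $\{0<\sigma\le\tau\}$ one has $\sigma_n<\sigma\le\tau$, hence $\sigma_n'=\sigma_n$, hence $\rho_n=\sigma_n<\sigma=\rho$ and so $\rho_B=\rho=\sigma$ there. With that lemma in place your two-line case analysis closes the proof. For comparison, the paper works at the level of the compensator rather than stopping times: it invokes \cite[Thm.\ 5.1]{AJ17} to identify the $\Gbb$-dual predictable projection of $X^\tau$ as $(X^{p,\Fbb})^\tau+\int_0^{\tau\wedge\cdot}A_{s-}^{-1}\,\rmd\aPP{X-X^{p,\Fbb}}{m}^{\Fbb}_s$ and shows both summands are continuous. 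That route is shorter and also produces the explicit compensator that is reused later (Theorem~\ref{thm:com.G}); yours, once repaired, is more elementary but yields only the qualitative statement.
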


In the remaining part of this section we state sufficient conditions for the $\Gbb$-quasi-left continuity of $Z$.

\paragraph{Avoidance of $\Fbb$-stopping times. } The first property we are going to recall is the \emph{avoidance of $\Fbb$-stopping times}, from now on referred as assumption $(\Ascr)$.
\begin{definition}\label{def:ass.A}
The random time $\tau$ satisfies assumption $(\Ascr)$ if $\Pbb[\tau=\sig<+\infty]=0$ for every $\Fbb$-stopping time $\sig$.
\end{definition}
The interpretation of assumption $(\Ascr)$ is the following: The random time $\tau$ carries an information which is completely exogenous: Nothing about $\tau$ can be inferred from the information contained in the reference filtration $\Fbb$. In particular, $\tau$ satisfies ($\Ascr$) if and only if $H\p{o}$ is continuous (see, e.g., \cite[Lemma 1.48(a)]{AJ17} or \cite[Lemma 3.4]{DTE21}). This implies that $H\p{o}=H\p{p,\Fbb}$ and, according to \eqref{LambdaG}, we get that $\Lm\p\Gbb$ is a continuous process. Therefore, $H$ is a quasi-left continuous process and $\tau$ is a totally inaccessible stopping time.

If $\tau$ satisfies assumption $(\Ascr)$, then $\Delta X\Delta H=0$. Indeed,
\begin{equation}\label{eq:cov.HX}
[X,H]_t=\sum_{s\leq t}\Delta X_s\Delta H_s=\sum_{s\leq t}\Delta X_s\Delta H_s1_{\{\Delta X_s\neq0\}\cap\{\Delta H_s\neq0\}}.
\end{equation}
Denoting by $(\sig_n)_{n\geq1}$ a sequence exhausting the thin set $\{\Delta X\neq0\}$ we obviously have $\{\Delta X\neq0\}\cap\{\Delta H\neq0\}=\bigcup_{n=1}\p\infty[\sig_n]\cap[\tau]$, where for a stopping time $\et$ we denote by $[\et]$ the graph of $\et$. Because of assumption $(\Ascr)$, the random set $[\sig_n]\cap[\tau]$ is evanescent, for every $n\geq1$. Hence, \eqref{eq:cov.HX} yields $[X,H]=0$ and therefore $\Delta X\Delta H=\Delta[X,H]=0$.
Thus, according to Theorem \ref{T:Zcomp}, if assumption $(\Ascr)$ is satisfied, by the special form of $H$, we have
\begin{align}
\mu^Z(\om, \rmd t, \rm x_1, \rmd x_2)&=\mu^X(\om, \rmd t, \rmd x_1) \delta_0(\rmd x_2) +\rmd H_t(\om)\delta_1(\rmd x_2)\delta_0(\rmd x_1),\notag\\
		\nu^{\Gbb,Z}(\om, \rmd t, \rmd x_1, \rmd x_2)&= \nu^{\Gbb,X}(\om, \rmd t, \rmd x_1)\delta_0(\rmd x_2)+\rmd\Lm\p\Gbb_t(\om)\delta_1(\rmd x_2)\delta_0(\rmd x_1).\label{nuGZLambda}
\end{align}

\paragraph{Immersion property.} As we have seen above, if $\tau$ satisfies assumption $(\Ascr)$, then the process $H$ is $\Gbb$-quasi-left continuous. This however, does not imply that the joint process $Z=(X,H)$ is $\Gbb$-quasi-left continuous. Indeed, the random time $\tau$ from Counterexample \ref{cex:no.qlc} avoids $\Fbb$-stopping times. However, $X$ is not $\Gbb$-quasi-left continuous. So, we need further assumptions to ensure the $\Gbb$-quasi-left continuity of $Z$. To this aim we state the following definition
\begin{definition}\label{def:hp.H}
We say that the random time $\tau$ satisfies the immersion property, from now on assumption $(\Hscr)$, if $\Fbb$-martingales remain $\Gbb$-martingales.
\end{definition}
If $X$ is an $\Fbb$-step process and $\tau$ is a random time satisfying assumption $(\Hscr)$, then \cite[Theorem 2.21]{JS00} yields $\nu\p{\Gbb,X}=\nu\p{\Fbb,X}$. Therefore, if $X$ is $\Fbb$-quasi-left continuous, i.e.,  $\nu\p{\Fbb,X}(\om,\{t\}\times\Rbb\p d)=0$, $t\geq0$, the same holds in $\Gbb$. We therefore have the following result, whose proof follows from the above discussion and is therefore omitted.
\begin{theorem}\label{thm:qlc.Z.A.H}
 Let $X$ be a step process and let $\tau$ be a random time satisfying both assumption $(\Ascr)$ and assumption $(\Hscr)$. Then, the $\Gbb$-dual predictable projection of $\mu\p Z$, where $Z=(X,H)$, is given by
\[\begin{split}
\nu^{\Gbb,Z}(\om, \rmd t, \rmd x_1, \rmd x_2)&= \nu^{\Fbb,X}(\om, \rmd t, \rmd x_1)\delta_0(\rmd x_2)+\delta_1(\rmd x_2)\delta_0(\rmd x_1)\rmd\Lm\p\Gbb_t(\om).
\end{split}
\]
In particular, if $X$ is $\Fbb$-quasi-left continuous, then $Z$ is $\Gbb$-quasi-left continuous as well.
\end{theorem}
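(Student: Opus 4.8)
The plan is to assemble Theorem~\ref{thm:qlc.Z.A.H} from two facts already at our disposal, namely Theorem~\ref{T:Zcomp} (the tensor-product structure of $\nu^{\Gbb,Z}$ when $\Delta X\,\Delta H=0$) and the consequence of the immersion hypothesis that $\nu^{\Gbb,X}=\nu^{\Fbb,X}$. First I would record that assumption $(\Ascr)$ is in force, so by the discussion preceding \eqref{nuGZLambda} we have $\Delta X\,\Delta H=0$ (the graphs $[\sigma_n]\cap[\tau]$ exhausting the common jumps are evanescent) and hence Theorem~\ref{T:Zcomp} applies, giving
\[
\nu^{\Gbb,Z}(\om,\rmd t,\rmd x_1,\rmd x_2)=\nu^{\Gbb,X}(\om,\rmd t,\rmd x_1)\delta_0(\rmd x_2)+\nu^{\Gbb,H}(\om,\rmd t,\rmd x_2)\delta_0(\rmd x_1).
\]
Because $H=1_{[\tau,+\infty)}$ is a point process, its $\Gbb$-compensator measure is concentrated on $\{x_2=1\}$ and is determined by the compensator $\Lm^\Gbb$ of the increasing process $H$; that is, $\nu^{\Gbb,H}(\om,\rmd t,\rmd x_2)=\delta_1(\rmd x_2)\,\rmd\Lm^\Gbb_t(\om)$, which is precisely \eqref{LambdaG}. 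Substituting yields the second summand in the displayed formula.

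Next I would bring in assumption $(\Hscr)$. Since $X$ is an $\Fbb$-step process and $\Fbb$-martingales remain $\Gbb$-martingales under immersion, the $\Fbb$-compensator $W\ast\nu^{\Fbb,X}$ of $W\ast\mu^X$ (for $\Fbb$-predictable $W$ with $|W|\ast\mu^X\in\Ascr^+_{\mathrm{loc}}(\Fbb)$) is an $\Fbb$-local martingale difference and therefore also a $\Gbb$-local martingale difference; by uniqueness of the $\Gbb$-dual predictable projection this forces $\nu^{\Gbb,X}=\nu^{\Fbb,X}$ on $\Fbb$-predictable functions, and since $\nu^{\Fbb,X}$ is already $\Fbb$-predictable hence $\Gbb$-predictable, Lemma~\ref{lem:dec.pred.fun}-type reasoning (or simply \cite[Theorem III.2.21]{JS00} as cited) upgrades this to equality of the random measures. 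Plugging $\nu^{\Gbb,X}=\nu^{\Fbb,X}$ into the first summand of the tensor decomposition gives the claimed formula for $\nu^{\Gbb,Z}$.

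Finally, for the quasi-left-continuity assertion I would read off the atoms of $\nu^{\Gbb,Z}$ in the time variable. For a $\Gbb$-step process, $\Gbb$-quasi-left continuity is equivalent to $\nu^{\Gbb,Z}(\om,\{t\}\times(\Rbb^d\times\Rbb^\ell))=0$ for all $t$ (no predictable atoms of the compensator). Under $(\Ascr)$ the process $H^o=H^{p,\Fbb}$ is continuous, so by \eqref{LambdaG} the process $\Lm^\Gbb$ is continuous and contributes no atom; and the $\Fbb$-quasi-left continuity of $X$ means exactly $\nu^{\Fbb,X}(\om,\{t\}\times\Rbb^d)=0$, so the first summand contributes no atom either. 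Hence $\nu^{\Gbb,Z}$ has no predictable time-atoms and $Z$ is $\Gbb$-quasi-left continuous.

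The substantive point — and the only place where real work hides — is the identity $\nu^{\Gbb,X}=\nu^{\Fbb,X}$ under immersion: one must be careful that immersion preserves not just martingality but the local-integrability and dual-predictable-projection structure, i.e. that passing from $\Fbb$ to $\Gbb$ does not enlarge the class of predictable test functions in a way that breaks the characterization of $\nu^{\Fbb,X}$. Since $\nu^{\Fbb,X}$ is $\Fbb$-predictable and $\Fbb\subseteq\Gbb$, it is a fortiori $\Gbb$-predictable, and immersion gives that $W\ast\mu^X-W\ast\nu^{\Fbb,X}\in\Mloc(\Gbb)$; by uniqueness of the $\Gbb$-compensator this is the assertion. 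The rest is bookkeeping with Dirac masses, and the quasi-left-continuity conclusion is then immediate from continuity of $\Lm^\Gbb$ and the hypothesis on $X$. This is why the proof can be stated as following "from the above discussion" and omitted in detail.
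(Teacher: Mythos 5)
Your proposal is correct and follows exactly the route the paper has in mind when it says the proof ``follows from the above discussion and is therefore omitted'': $(\Ascr)$ gives $\Delta X\,\Delta H=0$, so Theorem~\ref{T:Zcomp} provides the tensor decomposition of $\nu^{\Gbb,Z}$; $(\Hscr)$ gives $\nu^{\Gbb,X}=\nu^{\Fbb,X}$ via the cited result in \cite{JS00}; the $H$-component reduces to $\delta_1(\rmd x_2)\,\rmd\Lm^\Gbb_t$ because $H$ is a single-jump point process; and quasi-left continuity is read off from the absence of time atoms in both summands (continuity of $\Lm^\Gbb$ under $(\Ascr)$ and the hypothesis on $\nu^{\Fbb,X}$). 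One small remark: your parenthetical appeal to ``Lemma~\ref{lem:dec.pred.fun}-type reasoning'' for the identity $\nu^{\Gbb,X}=\nu^{\Fbb,X}$ is not quite on target (that lemma concerns decomposing $\Gbb$-predictable functions on $[0,\tau]$, not the immersion step); what actually carries the step is that $\nu^{\Fbb,X}$ is $\Gbb$-predictable and, via a monotone-class argument over generators $W(\om,t,x)=J_t(\om)f(x)$ with $J$ $\Gbb$-predictable, $W\ast\mu^X-W\ast\nu^{\Fbb,X}$ is a $\Gbb$-local martingale because $f\ast\mu^X-f\ast\nu^{\Fbb,X}\in\Mloc(\Fbb)\subseteq\Mloc(\Gbb)$ under immersion — which is precisely the content of the citation you and the paper both invoke.
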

We stress that, although assumption $(\Hscr)$ is of technical nature, it is equivalent to require that the $\sig$-fields $\Fscr_\infty$ and $\Gscr_t$ are conditionally independent given $\Fscr_t$ (see \cite[Theorem 3.2]{AJ17}). Furthermore, because of the Cox construction (see \cite[\S2.3.1]{AJ17}), it is easy to construct random times $\tau$ satisfying the assumptions $(\Ascr)$ and $(\Hscr)$ (see \cite[Remark 3.8]{DTE21} for details).

\paragraph{Jacod's absolute continuity condition.}
Let $\et$ denote the law of the random time $\tau$, that is, $\et(A):=\Pbb(\tau\p{-1}(A))$, for every $A\in\Bscr(\Rbb)$. We denote by $P_t(\om,A)$ a regular version of the conditional distribution $\Pbb[\tau\in A|\Fscr_t]$, $A\in\Bscr(\Rbb)$.
We make the following assumptions.
\begin{align}
&\et\ \text{ \emph{is a diffused probability measure}.}\label{ass:jac.av}\\
&P_t(\rmd u)\ \text{\ \emph{is absolutely continuous with respect to} }\ \et(\rmd u).\label{ass:jac}
\end{align}
 We stress that random times of this type can be constructed following the approach presented by Jeanblanc and Le Cam in \cite[\S5]{JLC09}.

 We say  that a random time $\tau$ with \eqref{ass:jac.av} and \eqref{ass:jac} satisfies Jacod's absolute continuity condition.
	According to \cite[Corollary 2.2]{EJJ10}, \eqref{ass:jac.av} ensures property ($\Ascr$).%that $\tau$ avoids
Thanks to \cite[Proposition 4.17]{AJ17}, \eqref{ass:jac} implies that there exists a nonnegative and $\Oscr(\Fbb)\otimes\Bscr([0,+\infty])$-measurable function $(\om,t,u)\mapsto p_t(\om,u)$ such that $(p_t(u))_{t\geq0}$ is an $\Fbb$-martingale for every $u\in[0,+\infty]$ and
\begin{equation}\label{eq:con.exp}
\Ebb[f(\tau)|\Fscr_t]=\int_\Rbb f(u)p_t(u)\et(\rmd u),\quad t\geq0
\end{equation}
for every bounded Borel function $f$.

We stress that Jacod's absolute continuity does not imply, in general, that $\tau$ satisfies assumption $(\Hscr)$. This is only true if and only if $p_t(u)=p_u(u)$ $\et$-a.s., if $u<t$ (see \cite[Proposition 5.28]{AJ17}).

 In the next result we give the form of the $\Gbb$-dual predictable projection of $\mu\p X$. To the best of our knowledge, this result is new and of independent interest.

\begin{theorem}\label{thm:com.G}
Let $(X,\Fbb)$ be an $\Fbb$-quasi left continuous step process, where $\Fbb:=\Fbb\p X\vee\Rscr$, and let $\tau$ be a random time satisfying  \eqref{ass:jac.av} and \eqref{ass:jac}. Then the $\Gbb$-dual predictable projection $\nu\p{\Gbb,X}$ of $\mu\p X$ is given by
\begin{equation}\label{eq:com.G}
\nu\p{\Gbb, X}(\om,\rmd t,\rmd x)=\Big(1_{[0,\tau]}(\om,t)\Big(1+\frac{W\p\prime(\om,t,x)}{A_{t-}}\Big)+1_{(\tau,+\infty)}(\om,t)(1+U(\om,t,x))\Big)\nu\p{\Fbb, X}(\om,\rmd t,\rmd x)
\end{equation}
where $W\p\prime$ is an $\Fbb$-predictable function such that $A_-+W\p\prime\geq0$ and $U$ is a $\Gbb$-predictable function such that $1+U\geq0$ identically. In particular, $X$ is a $\Gbb$-quasi left continuous step process.
\end{theorem}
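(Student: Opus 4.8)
The plan is to decompose the problem over the two complementary predictable sets $\lsi 0,\tau\rsi$ and $(\tau,+\infty)$ and compute the $\Gbb$-dual predictable projection of $\mu\p X$ separately on each piece. On $\lsi 0,\tau\rsi$ I would use the classical decomposition formula for $\Fbb$-martingales under progressive enlargement: for every $\Fbb$-local martingale $Y$, the process $Y-\int_0^{\cdot\wedge\tau}\frac{1}{A_{s-}}\rmd\langle Y,m\rangle\p\Fbb_s$ is a $\Gbb$-local martingale on $\lsi 0,\tau\rsi$ (this is Jeulin--Yor / the formula recalled around \eqref{LambdaG}). Applying this with $Y=W\ast\mu\p X-W\ast\nu\p{\Fbb,X}$ for a generic $\Fbb$-predictable function $W$, and identifying $\langle Y,m\rangle\p\Fbb$ via the jump measure — it is carried by the jumps of $X$, so it has the form $(W\,\widehat W)\ast\nu\p{\Fbb,X}$ for an $\Fbb$-predictable function $\widehat W=\widehat W(\om,t,x)$ coming from the representation of $m$'s jumps against $\mu\p X$ — one obtains that on $\lsi 0,\tau\rsi$,
\[
\nu\p{\Gbb,X}(\om,\rmd t,\rmd x)=\Bigl(1+\frac{W\p\prime(\om,t,x)}{A_{t-}}\Bigr)\nu\p{\Fbb,X}(\om,\rmd t,\rmd x),
\]
where $W\p\prime:=\widehat W$; the constraint $A_-+W\p\prime\geq0$ is forced by positivity of the compensator together with $\{A_->0\}\subseteq\lsi 0,\tau\rsi$. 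Under Jacod's hypothesis \eqref{ass:jac} the martingale $m$ and hence $\widehat W$ can be written explicitly through the density $p_t(u)$, but for the statement one only needs existence of such an $\Fbb$-predictable $W\p\prime$.

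On the set $(\tau,+\infty)$ the argument is more delicate and is where Jacod's condition really enters. Here I would use that, after $\tau$, the filtration $\Gbb$ is (up to right-continuity and the null sets) the initial enlargement $\Fbb\vee\sigma(\tau)$, and then invoke Jacod's theorem on initial enlargement: the $(\Fbb\vee\sigma(\tau))$-compensator of $\mu\p X$ is obtained from $\nu\p{\Fbb,X}$ by a multiplicative density built from the conditional density process $p_t(u)$ evaluated at $u=\tau$. Concretely, writing the Jacod density processes, one gets on $(\tau,+\infty)$ a representation
\[
\nu\p{\Gbb,X}(\om,\rmd t,\rmd x)=\bigl(1+U(\om,t,x)\bigr)\nu\p{\Fbb,X}(\om,\rmd t,\rmd x)
\]
for a $\Gbb$-predictable function $U$ with $1+U\geq0$ identically (positivity again forced by the compensator being a nonnegative measure and the density being nonnegative). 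Lemma \ref{lem:dec.pred.fun} is the tool that lets one pass between $\Gbb$-predictable and $\Fbb$-predictable functions on $\lsi 0,\tau\rsi$, keeping the bookkeeping consistent; on $(\tau,+\infty)$ no such reduction is available, which is precisely why $U$ is merely $\Gbb$-predictable while $W\p\prime$ is $\Fbb$-predictable.

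Combining the two pieces, since $\lsi 0,\tau\rsi$ and $(\tau,+\infty)$ partition $\Om\times\Rbb_+$ and $\nu\p{\Gbb,X}$ is the sum of its restrictions to these two predictable sets, yields \eqref{eq:com.G}. Finally, for the quasi-left continuity claim: $X$ being $\Fbb$-quasi-left continuous means $\nu\p{\Fbb,X}(\om,\{t\}\times\Rbb\p d)=0$ for all $t$; since \eqref{eq:com.G} exhibits $\nu\p{\Gbb,X}$ as absolutely continuous with respect to $\nu\p{\Fbb,X}$ (with a predictable density), it follows that $\nu\p{\Gbb,X}(\om,\{t\}\times\Rbb\p d)=0$ as well, so $\mu\p X$ charges no $\Gbb$-predictable time and $X$ is $\Gbb$-quasi-left continuous. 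The main obstacle I anticipate is the rigorous identification of the density $U$ on $(\tau,+\infty)$: one must carefully combine the "after-$\tau$ = initial enlargement by $\tau$" reduction (which requires justifying the right-continuity and measurability matching, perhaps via an auxiliary result on the shifted filtration) with Jacod's formula for the compensator under initial enlargement, and then check that the resulting object is genuinely $\Gbb$-predictable and that the positivity $1+U\geq0$ holds identically rather than merely a.e.; getting the function $W\p\prime$ on $\lsi 0,\tau\rsi$ to have exactly the stated $\Fbb$-predictable form (via $\langle\,\cdot\,,m\rangle\p\Fbb$ and the jump-structure of $m$) is the other point requiring care.
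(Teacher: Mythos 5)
Your plan is essentially the paper's own proof: decompose over $\lsi 0,\tau\rsi$ and $(\tau,+\infty)$, apply the Jeulin--Yor formula on $\lsi 0,\tau\rsi$ (writing $m$ against $\mu\p X$ to get $W\p\prime$ and fixing positivity on a Dol\'eans-null set), and on $(\tau,+\infty)$ pass through the initial enlargement $\Fbb\vee\sigma(\tau)$ and Jacod's compensator formula, then return to $\Gbb$ by the fact that the two filtrations agree on $(\tau,+\infty]$ together with a result ensuring a $\Gbb$-adapted local martingale with bounded jumps in a larger filtration is a $\Gbb$-local martingale. The obstacles you flag (the after-$\tau$ reduction and the $\Gbb$-predictability of $U$) are precisely the points the paper handles explicitly, so the plan is sound.
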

 The proof of Theorem \ref{thm:com.G} is technical and we give it in  Appendix \ref{sec:app}. At this point we only observe that the function $U$ in Theorem \ref{thm:com.G} can be constructed in the following way, as in the proof of \cite[Proposition 3.14 and Theorem 4.1]{J85}. Let $p_t(u)$ be the process appearing in \eqref{eq:con.exp}.  Since $p_\cdot(u)$ is an $\Fbb$-martingale for every $u$, we can represent it as $p_\cdot(u)=W\p u\ast\mu\p X-W\p u\ast\nu\p{\Fbb,X}$. The function $(\om,t,x,u)\mapsto W\p u(\om,t,x)$ can be chosen $\wt\Pscr(\Fbb)\otimes\Bscr([0,+\infty])$-measurable and such that $W\p u(\om,t,x)+p_{t-}(\om,u)\geq0$ and $p_{t-}(u)=0\Longrightarrow W\p u(\om,t,x)=0$. So using the convention $\frac00:=0$, one can define $V\p u(\om,t,x):=\frac{W\p u(\om,t,x)}{p_{t-}(\om,u)}$ which satisfies $1+V\p u\geq1$ identically. The $\Gbb$-predictable function $U$ is then given by \[U(\om,t,\om)=V\p{\tau(\om)}(\om,t,x)1_{(\tau,+\infty)}(\om,t)=\frac{W\p {\tau(\om)}(\om,t,x)}{p_{t-}(\om,\tau(\om))}1_{(\tau,+\infty)}(\om,t).\]

The following result is a direct application of \eqref{nuGZLambda},
Theorem \ref{thm:com.G}, and \eqref{LambdaG}.
\begin{corollary}\label{C:mainres}
	Let $\tau$ be a random time satisfying  conditions   \eqref{ass:jac.av} and \eqref{ass:jac}. Then
\begin{align*}
	&\nu^{\Gbb,Z}(\om, \rmd t, \rmd x_1, \rmd x_2)\\
	&= \Big(1_{[0,\tau]}(\om,t)\Big(1+\frac{W\p\prime(\om,t,x_1)}{A_{t-}(\om)}\Big)+1_{(\tau,+\infty)}(\om,t)(1+U(\om,t,x_1))\Big)\nu\p{\Fbb, X}(\om,\rmd t,\rmd x_1)\delta_0(\rmd x_2)\\
	&+ 1_{[0,\tau]}(\om, t)\frac{1}{A_{t-}(\om)}\rmd H_t\p{p,\Fbb}(\om)\delta_0(\rmd x_1)\delta_1(\rmd x_2),
\end{align*}
where $W\p\prime$ is an $\Fbb$-predictable function such that $A_-+W\p\prime\geq0$ and $U$ is a $\Gbb$-predictable function such that $1+U\geq0$ identically.
\end{corollary}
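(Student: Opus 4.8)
The plan is simply to combine the three ingredients named in the statement. First I would record that, by \cite[Corollary~2.2]{EJJ10}, the diffuseness hypothesis \eqref{ass:jac.av} forces $\tau$ to satisfy assumption $(\Ascr)$; hence, as established in Subsection~\ref{subs:qlc.en}, one has $\Delta X\,\Delta H=0$, and Theorem~\ref{T:Zcomp} (specialised to $H=1_{[\tau,+\infty)}$) yields the decomposition \eqref{nuGZLambda},
\[
\nu^{\Gbb,Z}(\om,\rmd t,\rmd x_1,\rmd x_2)=\nu^{\Gbb,X}(\om,\rmd t,\rmd x_1)\delta_0(\rmd x_2)+\rmd\Lm^\Gbb_t(\om)\,\delta_1(\rmd x_2)\delta_0(\rmd x_1).
\]
Thus it remains only to substitute the known expressions for the two blocks $\nu^{\Gbb,X}$ and $\Lm^\Gbb$.

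For the first block, since $X$ is (as throughout this section) an $\Fbb$-quasi-left continuous step process and $\tau$ satisfies \eqref{ass:jac.av}--\eqref{ass:jac}, Theorem~\ref{thm:com.G} gives
\[
\nu^{\Gbb,X}(\om,\rmd t,\rmd x)=\Big(1_{[0,\tau]}(\om,t)\Big(1+\tfrac{W\p\prime(\om,t,x)}{A_{t-}(\om)}\Big)+1_{(\tau,+\infty)}(\om,t)\big(1+U(\om,t,x)\big)\Big)\nu^{\Fbb,X}(\om,\rmd t,\rmd x),
\]
with $W\p\prime$ an $\Fbb$-predictable function satisfying $A_-+W\p\prime\geq0$ and $U$ a $\Gbb$-predictable function with $1+U\geq0$ identically. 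For the second block, \eqref{LambdaG} reads $\Lm^\Gbb=\int_0^{\tau\wedge\cdot}\frac1{A_{s-}}\,\rmd H^{p,\Fbb}_s$, so that $\rmd\Lm^\Gbb_t(\om)=1_{[0,\tau]}(\om,t)\frac1{A_{t-}(\om)}\,\rmd H^{p,\Fbb}_t(\om)$; this is well defined because $\{A_->0\}\subseteq[0,\tau]$.

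Plugging these two identities into the displayed formula for $\nu^{\Gbb,Z}$ and collecting the term carrying $\delta_0(\rmd x_2)$ (the ``$X$-part'') separately from the one carrying $\delta_0(\rmd x_1)\delta_1(\rmd x_2)$ (the ``$H$-part'') reproduces verbatim the asserted expression, and the sign/support constraints on $W\p\prime$ and $U$ are inherited directly from Theorem~\ref{thm:com.G}. Since each step is either a citation or an elementary substitution, no genuine difficulty arises; the only point deserving attention is the consistent bookkeeping of the indicator sets $[0,\tau]$ and $(\tau,+\infty)$ coming from the two inputs, which is immediate once one notes that the $H$-part is automatically confined to $[0,\tau]$ by \eqref{LambdaG}.
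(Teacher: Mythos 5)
Your proof is correct and follows precisely the same route as the paper, which states that the corollary is a direct application of \eqref{nuGZLambda}, Theorem~\ref{thm:com.G}, and \eqref{LambdaG}. You have merely spelled out the intermediate steps (the implication of assumption $(\Ascr)$ via \cite[Corollary~2.2]{EJJ10}, the resulting vanishing of $\Delta X\,\Delta H$, and the substitution into the two blocks of the decomposition), which the paper leaves implicit.
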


 We remark that if $\tau$ is a random time satisfying  conditions   \eqref{ass:jac.av} and \eqref{ass:jac}, then Corollary  \ref{C:mainres} yields the $\Gbb$-quasi left continuity of the step process $Z$, whenever $X$ is $\Fbb$-quasi left continuous.

\section{Applications to stochastic control theory}\label{sec:appl.con}

In this section we consider an optimization problem for marked point processes, in presence of an additional exogenous risk source that cannot be inferred from the information available in the market, represented by the filtration $\Fbb$.  The additional
risk source can be a shock event, as the death of the investor or the default of part of the
market. Its occurrence time is modelled by $\tau$ . We will solve it by means of suitable BSDEs, relying on the theory developed  in \cite{CF13}. The fundamental tool in order to apply the results in \cite{CF13} to the present context  is Theorem \ref{thm:wrp.G}.

Let $T>0$ be a fixed  finite time horizon.
 Let $X$ be an $\Rbb\p d$-valued step process, and set $\Fbb=(\Fscr_t)_{t\geq0}$ where $\Fscr_t:=\Fscr_t\p X$. In particular, $\Fscr_0$ is trivial, since $X_0=0$, $X$ being a step process.
Let $\tau: \Om \rightarrow (0,+\infty]$ be a random time and  $H$ be the default process associated with $\tau$. We denote by $\Gbb$ the progressive enlargement of $\Fbb$ by $\tau$, and by $\Gbb_T =(\mathcal G_t)_{t \in [0,T]}$  the restriction of $\Gbb$ to $[0,T]$.
We also introduce the   jump measure $\mu\p X$ of $X$ and the corresponding $\Fbb$-dual predictable projection $\nu\p {\mathbb{F}, X}$.

Introduce the step process $Z=(X,H)$ with jump measure  $\mu^Z$ and corresponding $\Gbb$-dual predictable projection $\nu^{\Gbb,Z}$.
By Theorem \ref{thm:wrp.G}, we know that $Z$ satisfies the WRT with respect to $\Gbb_T$.

To ensure that the theory developed in \cite{CF13} can be applied to the enlarged filtration $\Gbb$ we now state the following assumptions.

\begin{assumption}
\label{A:appl}
 The process $X$ is $\Fbb$-quasi-left continuous.
\end{assumption}
\begin{assumption}\label{A:appl2}
 $\tau$ satisfies ($\Ascr$) and ($\Hscr$).
\end{assumption}
\begin{assumption}\label{A:appl3}
 $\tau$ satisfies \eqref{ass:jac.av}-\eqref{ass:jac} (hence ($\Ascr$)).
\end{assumption}

In the following we will always assume Assumption \ref{A:appl} together with Assumption \ref{A:appl2} \emph{or alternatively} with Assumption \ref{A:appl3}.

\begin{remark}
(i) The  $\Fbb$-compensator $\nu\p{\Fbb,X}$ of $\mu^X$  admits the decomposition
\[
\nu^{\Fbb,X}(\rmd t,\,\rmd x_1)=
\phi^{\Fbb, X}_t(\rmd x_1)\rmd C^{\Fbb, X}_t,
\] where  $\phi^{\Fbb, X}$ is a transition probability from  $(\Om\times [0,T], \mathcal P(\Fbb))$ into $(\Rbb^d, \mathcal B(\Rbb^d))$, and, by   Assumption \ref{A:appl},  $C^{\Fbb, X}\in\Ascr\p +_\mathrm{loc}(\Fbb)$ is a continuous process. Analogously,  the  $\Gbb$-compensator $\nu\p{\Gbb,X}$ of $\mu^X$  admits the decomposition
$$
\nu^{\Gbb,X}(\rmd t,\,\rmd x_1)=
\phi^{\Gbb, X}_t(\rmd x_1)\rmd C^{\Gbb, X}_t,
$$
where  $\phi^{\Gbb, X}$ is a transition probability from  $(\Om\times [0,T], \mathcal P(\Gbb))$ into $(\Rbb^d, \mathcal B(\Rbb^d))$, with  $C^{\Gbb, X}\in\Ascr\p +_\mathrm{loc}(\Gbb)$.  If furthermore Assumption \ref{A:appl} together with Assumption \ref{A:appl2} \emph{or} with Assumption \ref{A:appl3} hold, by Theorem \ref{thm:qlc.Z.A.H} or by Theorem \ref{thm:com.G}, $C^{\Gbb, X}$ is also a continuous process.
	\\[.2cm](ii)
	From Assumption \ref{A:appl} and condition ($\Ascr$) it follows that the
$\sigma$-algebra $\mathcal G_0$ is trivial.
\end{remark}

\begin{remark}\label{R:contC}
Because of $(\Ascr)$, by Theorem \ref{T:Zcomp} %From
we get that
\[
\mu^Z(\om, \rmd t, \rmd x_1, \rmd x_2)=\mu^X(\om, \rmd t, \rmd x_1) \delta_0(\rmd x_2) + \rmd H_t(\om)\delta_1(\rmd x_2)\delta_0(\rmd x_1)
\]
and the corresponding $\Gbb$-dual predictable projection is given by
\begin{align}\label{nuZ2}
	\nu^{Z}(\om, \rmd t, \rmd x_1, \rmd x_2)=  \delta_0(\rmd x_2)\phi^{\Gbb, X}_t(\om, \rmd x_1)\rmd C^{\Gbb, X}_t(\om)
	+
	\delta_0(\rmd x_1)\delta_1(\rmd x_2)
	\rmd \Lambda_t^{\Gbb}(\om).
\end{align}
By  Theorem \ref{thm:qlc.Z.A.H} (if Assumption \ref{A:appl2} holds) or Theorem \ref{thm:com.G} (if Assumption \ref{A:appl3} holds) we have that $Z$ is a $\Gbb$-quasi left continuous step process.
\end{remark}

We notice that the random measure $\nu\p Z$ in \eqref{nuZ2} can be rewritten as
	\begin{align}\label{nuZbis}
	\nu^{Z}(\om, \rmd t, \rmd x_1, \rmd x_2)= \phi_t(\om, \rmd x_1, \rmd x_2)\rmd C_t( \om),
\end{align}
where  $C$ is defined by
\begin{align}\label{C}
C_t(\omega):=
C^{\Gbb, X}_t(\om)+ \Lambda_t^{\Gbb}(\om)
\end{align}
and $\phi$  is a transition probability from  $(\Om\times [0,T], \Pscr(\Gbb))$ into $(\Rbb^{d+1}, \mathcal B(\Rbb^{d+ 1}))$.

We also observe that, by \eqref{nuZ2}, the following identities  hold:
\begin{align*}
	1_{\{x_2 =0\}} \,\nu^{Z}(\om, \rmd t, \rmd x_1, \rmd x_2) &=  \delta_0(\rmd x_2)\phi^{\Gbb, X}_t(\om, \rmd x_1)\rmd C^{\Gbb, X}_t(\om),\\
	1_{\{x_2 \neq 0\}} \,\nu^{Z}(\om, \rmd t, \rmd x_1, \rmd x_2) &= \delta_1(\rmd x_2)\delta_0(\rmd x_1)
	%1_{[0, \tau]}(\om, t)\frac{1}{{\pa A_{t-}(\om)}}  \rmd C^H_t(\om)
		\rmd \Lambda_t^{\Gbb}.
\end{align*}
In particular,  integrating previous expressions on $\R^{d+1}$, we get
\begin{align}
	d_1(\omega, t) \rmd C_t(\omega) &= \rmd C^{\Gbb, X}_t(\om),\label{f:CG}\\
	d_2(\omega, t)  \rmd C_t(\omega) &=
		\rmd \Lambda_t^{\Gbb},\label{f:CG2}
\end{align}
with
\begin{align}
	d_1(\omega, t)&:= \int_{\R^{d+ 1}} 1_{\{x_2 =0\}} \,\phi_t(\omega, \rmd x_1, \rmd x_2),\label{d1}\\
	d_2(\omega, t)&:=\int_{\R^{d+ 1}} 1_{\{x_2 \neq 0\}} \,\phi_t(\omega, \rmd x_1, \rmd x_2).\label{d2}
\end{align}

\begin{remark}\label{equivdec}
Under Assumptions \ref{A:appl}-\ref{A:appl2} we have
$\phi^{\Gbb, X}_t(\om, \rmd x_1)=\phi^{\F, X}_t(\om, \rmd x_1)$,
	$\rmd C^{\Gbb, X}_t(\om)=\rmd C^{\F, X}_t(\om)$
while  under Assumptions \ref{A:appl}- \ref{A:appl3} one gets
\begin{align*}
	\phi^{\Gbb, X}_t(\om, \rmd x_1)&=\frac{\kappa(\omega, t, x_1)}{\int_{\R^d}\kappa(\omega, t, x_1)\,\phi^{\F, X}_t(\om, \rmd x_1)}\phi^{\F, X}_t(\om, \rmd x_1), \\
\rmd C^{\Gbb, X}_t(\om)&=\int_{\R^d}\kappa(\omega, t, x_1)\,\phi^{\F, X}_t(\om, \rmd x_1)\,\rmd C^{\F, X}_t(\om),
\end{align*}
where
$$
\kappa(\omega, t, x_1):=1_{[0,\tau]}(\om,t)\Big(1+\frac{W\p\prime(\om,t,x_1)}{A_{t-}(\om)}\Big)+1_{(\tau,+\infty)}(\om,t)(1+U(\om,t,x_1)\Big)
$$
is the density appearing in Theorem \ref{thm:com.G}.
\end{remark}

\subsection{The control problem on  [0,\,T]}\label{S:T}

The data specifying the optimal control problem are an
  action  space
$U$, a running cost function $l$, a terminal cost function $g$,
and another function $r$ specifying the effect of the control
process. They are assumed to satisfy the following conditions.

\begin{assumption}\label{hyp:controllosingle}
$(U,\calu)$ is a measurable space.
\end{assumption}

\begin{assumption}\label{hyp:rl}
 The functions $r,l:\Omega\times [0,T]\times \R^{d}\times U\to \R$
are $\calp(\Gbb)\otimes \mathcal B(\R^{d})\otimes \calu$-measurable and
 there exist
constants $M_r> 1$, $M_l>0$ such that, $\P$-a.s.,
\begin{equation}\label{ellelimitato}
0\le r_t (x_1,u)\le M_r,\quad |l_t (x_1,u)|\le M_l,\qquad t\in [0,T],
x_1 \in \R^d,
 u\in U.
\end{equation}
\end{assumption}

\begin{assumption}\label{hyp:controllo_g}
 The function $g:\Omega\times \R^{d}\to \R$ is $\calg_T\otimes
\mathcal B(\R^{d})$-measurable, and there exists a constant  $\beta$ such that $\beta > \sup|r-1|^2$, and
\begin{align}
 &\E [\rme^{ \beta C_T}] < + \infty, \label{Cint}\\
 &\E [|g(X_T)|^2 \rme^{\beta C_T} ] < + \infty. \label{gint}
 \end{align}
\end{assumption}
To every  admissible control process $u \in \mathcal C$ we will associate the cost functional
\begin{equation}\label{cost_T}
 J(u)=\E_u\left[
\int_0^Tl_t(X_t,u_t)\rmd C^{\Gbb, X}_t + g(X_T)\right],
\end{equation}
where $\E_u$ denotes the expectation under a probability measure $\P_u$, absolutely continuous with respect to $\P$, that will be specified below.
The control problem will consists in minimizing $J$ over all the admissible controls.  Because of the structure of the control problem, it is evident that in general it cannot be solved in the filtration $\Fbb$. Therefore,  we have to allow $\Gbb$-predictable strategies: The set of admissible
control processes, denoted $\mathcal C$, consists of all $U$-valued and $\Gbb$-predictable processes $u(\cdot)=(u_t)_{t\in[0,T]}$.

\begin{remark}[Interpretation]\label{rem:int1} We now give an interpretation of the control problem associated with \ref{cost_T}. One	could consider for instance a defaultable terminal cost $g:\Omega\times \R^{d}\to \R$ of the form $g(\omega, x_1)=g_1(x_1)1_{\{T<\tau(\omega)\}}+g_2(x_1)1_{\{T\geq\tau(\omega)\}}$.
 In this case, $g_1$ is the terminal  cost if the default does not occur before the maturity $T$ while $g_2$ is the cost to pay in case of default up to maturity. Similarly, one can allow a defaultable running cost $l$.  So, the agent acting in the enlarged market can minimize the cost functional also over defaultable terminal costs and running costs. This is not possible when acting in the reference market represented by $\Fbb$. One could also regard the minimization problem associated to \eqref{cost_T} as the problem of an insider who disposes of private information about $\tau$ and whose strategies are  $U$-valued and $\Gbb$-predictable.
\end{remark}

Related to every control $u\in{\mathcal C}$, we  introduce the predictable random measure
\begin{align}\label{nuZu}
	\nu^{Z,u}(\om, \rmd t, \rmd x_1, \rmd x_2)&=r_t(\omega, X_t(\omega),   u_t(\omega))\,\delta_0(\rmd x_2)\,\phi^{\Gbb, X}_t(\om, \rmd x_1) \rmd C^{\Gbb, X}_t(\om)\notag\\
&+ %\rmd \Lambda_t^{\Gbb}(\om)
	\delta_1(\rmd x_2)\delta_0(\rmd x_1)
	%1_{[0, \tau]}(\om, t)\lambda_t(\om) \rmd C^H_t(\om)
	\rmd \Lambda_t^{\Gbb}.
\end{align}
%{\pa where $\lm:=1/A_-$.}
Let us now set
\begin{equation}\label{tilder}
	R_t(x_1, x_2,  u) := r_t(x_1,  u)\,1_{\{x_2 =0\}} + \,1_{\{x_2 \neq 0\}}, \quad x_1 \in \R^d, \,\,x_2 \in \{0,1\}, \,\,u \in U.
\end{equation}
By \eqref{f:CG}-\eqref{f:CG2},  we have $\nu^{Z,  u}=R_t(X_t,H_t,   u_t)\nu^Z$.

 We denote by $(T_n)_{n\geq1}$ the sequence of jump times of $ Z$ and, for any $u\in{\mathcal C}$, we consider the process
\begin{equation}\label{Lubis}
L^{ u}_t=
\exp\Big(\int_0^t\int_{\R^{d+ 1}} (1-R_s (x_1,x_2, u_s))\nu^Z( \rmd s, \rmd x_1, \rmd x_2)\Big)
\prod_{n\ge1\,:\,T_n\le t}R_{T_n} (X_{T_n},H_{T_n},u_{T_n}),
\end{equation}
with the convention that the last product equals $1$ if there are
no indices $n\ge 1$ satisfying $T_n\le t$. We notice that $L^{ u}$ is a Dol\'eans-Dade stochastic exponential, solution to the equation
\[
L\p u_t=1+\int_0\p t\int_{\Rbb\p{d+1}}L\p u_{s-}(R_s(x_1,x_2,u_s)-1)(\mu\p Z-\nu\p Z)(\rmd s,\rmd x_1,\rmd x_2).
\]
Hence, $L\p u$ is a $\Gbb$-local martingale, for every $u\in\calc$. Furthermore, $L\p u$ is nonnegative (see \cite[Proposition 4.3]{J74} for details), thus it is a nonnegative supermartingale.

Taking into account  \eqref{tilder}, we remark that
\begin{align*}
	&\int_0^t\int_{\R^{d+ 1}} (1-R_s (x_1,x_2, u_s))\nu^Z( \rmd s, \rmd x_1, \rmd x_2)\\
	&= \int_0^t\int_{\R^{d}} \int_{\R} (1-(r_s(x_1, u))\delta_0(\rmd x_2)\phi^{\Gbb, X}_s( \rmd x_1) \rmd C^{\Gbb, X}_s
\end{align*}
so that  \eqref{Lubis} reads
\begin{equation}\label{Lu}
L^{ u}_t=
\exp\Big(\int_0^t\int_{\R^{d}} (1-r_s (x_1, u_s))\phi^{\Gbb, X}_s( \rmd x_1) \rmd C^{\Gbb, X}_s\Big)
\prod_{n\ge1\,:\,T_n\le t}(r_{T_n} (X_{T_n},u_{T_n})\,1_{\{H_{T_n} =0\}}%\rho_1(T_n, X_{T_n},H_{T_n})
+\,1_{\{H_{T_n} \neq 0\}}).
%\rho_2(T_n, X_{T_n},H_{T_n})).
\end{equation}
The  result below follows from   \cite[Lemma 4.2]{CF13} with
$\gamma = 2$.
\begin{lemma}\label{L:Lu}
Assume that
\begin{equation}\label{C_r}
\E [\rme^{(3 + M^4_r )C_T}] < + \infty.
\end{equation}
%Let $L^u$ be the process in \eqref{Lu}.
Then, for every $u \in \calc$, 	$\sup_{t \in [0,\,T]} \E [|L^{ u}_t|^2] < \infty$ and $\E [L^{u}_T ] = 1$.  In particular, $L\p u$ is a square integrable $\Gbb$-martingale for every $u\in\calc$.
\end{lemma}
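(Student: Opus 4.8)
The plan is to deduce the claim directly from \cite[Lemma 4.2]{CF13}, specialised to $\gamma=2$, after checking that the data $(\mu^Z,\nu^Z,R)$ of the present progressively enlarged model meet the hypotheses of that lemma. First I would collect the structural facts. By Theorem~\ref{thm:wrp.G}(i), $\mu^Z$ is a marked point process on the mark space $\Rbb^{d+1}$ with respect to $\Gbb$, and $\Gbb_T$ satisfies the usual conditions with $\Gscr_0$ trivial (as observed above). By Remark~\ref{R:contC} (which holds under Assumption~\ref{A:appl} together with Assumption~\ref{A:appl2} or~\ref{A:appl3}, via Theorem~\ref{thm:qlc.Z.A.H} or Theorem~\ref{thm:com.G}), the $\Gbb$-compensator of $\mu^Z$ admits the disintegration $\nu^Z(\om,\rmd t,\rmd x_1,\rmd x_2)=\phi_t(\om,\rmd x_1,\rmd x_2)\,\rmd C_t(\om)$ of \eqref{nuZbis}, with $\phi$ a transition probability into $\Bscr(\Rbb^{d+1})$ and $C=C^{\Gbb,X}+\Lambda^{\Gbb}$, see \eqref{C}, a \emph{continuous}, $\Gbb$-adapted, locally integrable increasing process. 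This is precisely the driving object used in \cite{CF13}.

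Second I would identify $L^u$ with the density process appearing in \cite[Lemma 4.2]{CF13}. The coefficient $R$ of \eqref{tilder} is $\Pscr(\Gbb)\otimes\Bscr(\Rbb^{d})\otimes\Bscr(\Rbb)\otimes\calu$-measurable by Assumption~\ref{hyp:rl}, and since $M_r>1$ and $0\le r\le M_r$ one has $0\le R_t(x_1,x_2,u)\le M_r$ identically; moreover $\nu^{Z,u}=R_t(X_t,H_t,u_t)\,\nu^Z$ by \eqref{f:CG}--\eqref{f:CG2}. Hence, for each fixed $u\in\calc$, the process $L^u$ of \eqref{Lubis}--\eqref{Lu} is exactly the Dol\'eans-Dade exponential $L^u_t=1+\int_0^t\int_{\Rbb^{d+1}}L^u_{s-}\bigl(R_s(x_1,x_2,u_s)-1\bigr)(\mu^Z-\nu^Z)(\rmd s,\rmd x_1,\rmd x_2)$ studied in that lemma, built from the bounded nonnegative coefficient $R$; as recalled just before the statement, $L^u$ is a nonnegative $\Gbb$-local martingale, hence a nonnegative $\Gbb$-supermartingale, with $L^u_0=1$.

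Third I would invoke \cite[Lemma 4.2]{CF13} with $\gamma=2$. Specialised to $\gamma=2$ and to the bound $M_r$ on $R$, the exponential integrability hypothesis of that lemma reads exactly $\E[\rme^{(3+M_r^4)C_T}]<+\infty$, that is \eqref{C_r}; its conclusion gives $\sup_{t\in[0,T]}\E[|L^u_t|^2]<\infty$ and $\E[L^u_T]=1$. To conclude, a nonnegative supermartingale $L^u$ with $L^u_0=1$ and $\E[L^u_T]=1$ is a true $\Gbb$-martingale on $[0,T]$, and together with the uniform $L^2$-bound just obtained this shows that $L^u$ is a square-integrable $\Gbb$-martingale for every $u\in\calc$.

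The only genuinely non-routine point is the verification, in the first two steps, that the enlarged model fits the framework of \cite[Lemma 4.2]{CF13}: the continuity of $C$ (equivalently, the $\Gbb$-quasi-left continuity of $Z$, which is precisely where Assumptions~\ref{A:appl2} or~\ref{A:appl3} are needed), the fact that $\nu^Z$ disintegrates along a genuine transition probability $\phi$ over the enlarged mark space $\Rbb^{d+1}$ even though $\nu^Z$ is degenerate there (so one must make sure \cite[Lemma 4.2]{CF13} is stated for a general mark space and does not require $\phi_t$ to have full support), and the two-sided bound $0\le R\le M_r$. Everything else, including the bookkeeping that matches the exponential constant $3+M_r^4$ to the case $\gamma=2$, is contained in the proof of \cite[Lemma 4.2]{CF13}.
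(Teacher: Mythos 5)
Your proposal is correct and takes exactly the same route as the paper, which states the result as an immediate consequence of \cite[Lemma 4.2]{CF13} applied with $\gamma=2$. The verification work you carry out (continuity of $C$ via the quasi-left continuity of $Z$, the disintegration $\nu^Z=\phi\,\rmd C$, the bound $0\le R\le M_r$, and the identification of $L^u$ with the Dol\'eans-Dade exponential in \cite{CF13}) is precisely the routine checking the paper leaves implicit.
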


 By Lemma \ref{L:Lu} we can define
an absolutely continuous probability measure $\P_{u}$
by  setting
\[\P_{u}(d\omega)=L_T^{u}(\omega)\P(d\omega).\]
It can then be proven (see e.g.\
\cite[Theorem 4.5]{J74})  that the $\Gbb$-compensator $\nu^{Z,u}$ of $\mu^Z$
under $\P_{u}$ is given by \eqref{nuZu}.
To every $u\in\calc$ we can then associate the cost functional
\eqref{cost_T},
where $\E_u$ denotes the expectation under $\P_u$.
The control problem is

\begin{equation}\label{contr_pb}
\inf_{u \in \mathcal C} J(u)=\inf_{u \in \mathcal C} \E_u\left[
\int_0^Tl_t(X_t,u_t)\rmd C^{\Gbb, X}_t + g(X_T)\right].
\end{equation}
Notice that $J$
in \eqref{cost_T} is finite for every admissible control. Moreover,
$g(X_T)$ is integrable under $\P_u$, since
\begin{equation}\label{GintPu}
\E_u[|g(X_T)|] = \E[|L_T^u g(X_T)|] \leq (\E [|L_T^u|^2])^{1/2}(\E [|g(X_T)|^2])^{1/2}< \infty
\end{equation}
where the latter inequality follows from \eqref{gint} in  Assumptions   \ref{hyp:controllo_g}. Moreover, under  Assumption \ref{hyp:rl}  and recalling \eqref{hyp:controllo_g} and  \eqref{f:CG}, we get
$$
\E_u\left[
\int_0^Tl_t(X_t,u_t) \rmd C^{\Gbb, X}_t\right]=
\E_u\left[
\int_0^Tl_t(X_t,u_t)d_1(t) \rmd C_t\right]
\normalcolor\leq M_l \,\E_u[C_T] < \infty.
$$

\begin{remark}[The action of the insider]\label{rem:int2}
Because of \eqref{nuZu}, in the optimal control problem \eqref{contr_pb} the   insider
acts under $\P^u$ by  changing  the  $\Gbb$-compensator of $X$, while the one of $H$ (and hence of $\tau$) remains untouched.
\end{remark}

\textbf{The associated BSDE.}
We next proceed to the solution of the optimal control problem
formulated above. A fundamental role is played by  the following BSDE: $\P$-a.s., for all $t\in [0,T]$,
\begin{equation}\label{bsdecontrollo}
    Y_t+\int_t^T\int_{\R^{d+ 1}} \Theta_s(x_1,x_2)\,(\mu^Z- \nu^Z)(\rmd s,\rmd x_1,\rmd x_2) =
g(X_T) +\int_t^T f(s,X_s,\Theta_s(\cdot))\,\rmd C^{\Gbb, X}_s.
\end{equation}
%with terminal condition $g(X_T,H_T)$ and
The generator of BSDE \eqref{bsdecontrollo} is defined by means of
%the hamiltonian function $f$.
the Hamiltonian function
\begin{equation}\label{defhamiltonian}
    f(\omega,t,y_1,\theta(\cdot)):=\inf_{u\in U}\Big\{
l_t(\omega, y_1,u)+ \int_{\R^{d}} \theta(x_1,0) \, (r_t
(\omega,x_1,u)-1)\,\phi_t^{\Gbb, X}(\omega,\rmd x_1)\Big\},
\end{equation}
for every $\omega\in\Omega$, $t\in[0,T]$, $y_1 \in \R^d$, $y_2 \in \R^1$ and $\theta\in\call^1(\R^{d + 1},\mathcal B(\R^{d + 1}), \phi_t(\omega,\rmd x_1, \rmd x_2))$.

For $\beta >0$, we look for a solution  $(Y, \Theta(\cdot))$ to \eqref{bsdecontrollo} in the space $L^{2, \beta}_{\text{Prog}}(\Omega \times [0,\,T],\Gbb) \times L^{2, \beta}(\mu^Z,\Gbb)$,  where
$L^{2, \beta}_{\text{Prog}}(\Omega \times [0,\,T],\Gbb)$ denotes the set of real-valued $\mathbb G$-progressively measurable processes $Y$ such that
\begin{align*}
	\E\Big[\int_0^T \rme^{\beta C_t}|Y_t|^2\rmd C_t\Big]< \infty,
\end{align*}
and $ L^{2, \beta}(\mu^Z,\Gbb)$ denotes the set of $\mathcal P(\mathbb G) \otimes \mathcal B(\R^{d + 1})$-measurable functions $\Theta$ such that
\begin{align*}
	&\E\Big[\int_0^T \int_{\R^{d + 1}} \rme^{\beta C_t}|\Theta_t(x_1, x_2)|^2\phi_t(\rmd x_1, \rmd x_2)\rmd C_t\Big]\\
	&=\E\Big[\int_0^T \int_{\R^{d}} \rme^{\beta C_t}|\Theta_t(x_1, 0)|^2\phi^{\Gbb, X}_t(\rmd x_1)\rmd C^{\Gbb, X}_t\Big]+ \E\Big[\int_0^T  \rme^{\beta C_t}|\Theta_t(0,1)|^2
	%\,1_{[0,\tau]}(t)\frac{1}{A_{t}}\,\rmd C^H_t
	\rmd \Lambda_t^{\Gbb}\Big]< \infty.
\end{align*}

By   $L^{1,0}(\mu^Z,\Gbb)$ we denote the set of $\mathcal P(\mathbb G) \otimes \mathcal B(\R^{d + 1})$-measurable functions $\Theta$ such that
\begin{align*}
	&\E\Big[\int_0^T \int_{\R^{d + 1}} |\Theta_t(x_1, x_2)|\phi_t(\rmd x_1, \rmd x_2)\rmd C_t\Big]\\
	&=\E\Big[\int_0^T \int_{\R^{d}}|\Theta_t(x_1, 0)|\phi^{\Gbb, X}_t(\rmd x_1)\rmd C^{\Gbb, X}_t\Big]+ \E\Big[\int_0^T |\Theta_t(0,1)|\,%1_{[0,\tau]}(t)\frac{1}{A_{t}}\,\rmd C^H_t
	\rmd \Lambda_t^{\Gbb}\Big]< \infty.
\end{align*}
We note the inclusion $ L^{2,\beta}(\mu^Z)  \subseteq L^{1,0}(\mu^Z)$ for all $\beta>0$ holds  (see \cite[Remark 3.2-2.]{CF13}).
We will consider the following additional assumption:
\begin{assumption}\label{hyp:hamiltoniana}
For every $\Theta\in L^{1,0}(\mu^Z,\Gbb)$ there exists a $\Gbb$-predictable process (i.e., an admissible control)
$\underline{u}^\Theta:\Omega\times [0,T]\to U$,  such that, for $d_1(\omega,t)\rmd C_t(\om) \P(d\omega)$-almost all $(\omega, t)$, we have
\begin{align}\label{minselector}
    f(\omega,t,X_{t-}(\omega),\Theta_t(\omega, \cdot))&=
l_t(\om, X_{t-}(\omega),\underline{u}^\Theta(\omega,t )) \\
&+ \int_{\R^{d+ 1}} \Theta_t(\omega, x_1, 0) \,
(r_t
(\omega,x_1, \underline{u}^\Theta(\omega,t))-1)\,\phi_t^{\Gbb,X}(\omega,\rmd x_1).\notag
\end{align}
\end{assumption}
\begin{remark}\label{R:splitted}
Assumption \ref{hyp:hamiltoniana} can be verified in specific
situations when it is possible to compute explicitly the function
$\underline{u}^\Theta$. General conditions for its validity can also be
formulated using appropriate measurable selection theorems, as the case $U$ compact metric space, and $l_t(\om, x, \cdot), r_t(\om, x, \cdot): U \rightarrow \R$ continuous functions, see
\cite[Proposition 4.8]{CF13}.
\end{remark}

 Thanks to  the WRT  for  $Z$ with respect to $\Gbb_T$ provided in Theorem \ref{thm:wrp.G}, one can show the existence and the uniqueness of the solution of BSDE \eqref{bsdecontrollo}. The proof of the proposition below is postponed to Appendix \ref{A:B}.
\begin{proposition}\label{Pwellpos1}
	Let Assumptions \ref{A:appl}, \ref{hyp:controllosingle}, \ref{hyp:rl} and  \ref{hyp:hamiltoniana} hold true. Assume  that Assumption \ref{A:appl2} or \ref{A:appl3} holds true. Set
	\begin{align}\label{D:L}
 L:=\textup{ess}\sup_{\omega} \big( \sup\{ |r_t (x_1, u)-1|\,:\,
  t\in [0,T], \,x_1 \in \R^d, \,u\in U\} \big)
\end{align}
and let  Assumption
\ref{hyp:controllo_g}  hold true with $\beta >L^2$.
Then BSDE \eqref{bsdecontrollo} admits a unique solution $(Y, \Theta(\cdot))\in L^{2, \beta}_{\text{Prog}}(\Omega \times [0,\,T],\Gbb) \times L^{2, \beta}(\mu^Z,\Gbb)$. 
\end{proposition}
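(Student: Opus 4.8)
The plan is to reduce BSDE \eqref{bsdecontrollo} to the abstract framework of \cite{CF13} and then quote the existence-and-uniqueness theorem there. The first step is to recognize that, by Theorem \ref{thm:wrp.G} (applied with $H=1_{[\tau,+\infty)}$), the $\Gbb$-step process $Z=(X,H)$ enjoys the weak representation property with respect to $\Gbb_T$, and that, by Remark \ref{R:contC}, its $\Gbb$-compensator has the disintegrated form $\nu^Z(\om,\rmd t,\rmd x_1,\rmd x_2)=\phi_t(\om,\rmd x_1,\rmd x_2)\,\rmd C_t(\om)$ with $C=C^{\Gbb,X}+\Lambda^\Gbb$ continuous (this is where Assumption \ref{A:appl} together with \ref{A:appl2} or \ref{A:appl3} enters: it guarantees $\Gbb$-quasi-left continuity of $Z$, equivalently continuity of $C$). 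Thus the driving marked point process is quasi-left continuous, the standing hypothesis of \cite{CF13}. I would write \eqref{bsdecontrollo} in the canonical form of \cite{CF13} by writing the generator against $\phi_t(\rmd x_1,\rmd x_2)\rmd C_t$ instead of $\phi^{\Gbb,X}_t(\rmd x_1)\rmd C^{\Gbb,X}_t$; using \eqref{f:CG} this only multiplies $f$ by the Radon--Nikodym factor $d_1(\om,t)$, and I would set $\tilde f(\om,t,y,\theta(\cdot)):=f(\om,t,y,\theta(\cdot))\,d_1(\om,t)$, noting that $\tilde f$ does not depend on the $\Lambda^\Gbb$-part of $\theta$.

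The second step is to verify the hypotheses of \cite[Theorem 3.4]{CF13} for this generator. The \emph{Lipschitz and sublinear-growth} condition follows from Assumption \ref{hyp:rl}: writing
\[
\tilde f(\om,t,y,\theta)=\Big(\inf_{u\in U}\big\{l_t(y,u)+\textstyle\int_{\R^d}\theta(x_1,0)(r_t(x_1,u)-1)\phi^{\Gbb,X}_t(\rmd x_1)\big\}\Big)d_1(\om,t),
\]
for two arguments $\theta,\theta'$ the difference of the infima is bounded by $\sup_u|\int(\theta-\theta')(x_1,0)(r_t(x_1,u)-1)\phi^{\Gbb,X}_t(\rmd x_1)|$, which by \eqref{D:L} and \eqref{f:CG} is $\le L\int_{\R^{d+1}}|\theta-\theta'|(x_1,x_2)\,\phi_t(\rmd x_1,\rmd x_2)$, with the same $L$ appearing in the admissible range $\beta>L^2$. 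Boundedness of $l$ and of $d_1$ (which is a density $\le 1$ of one positive measure against a larger one) gives $|\tilde f(\om,t,y,0)|\le M_l$, so the terminal-plus-generator integrability required by \cite{CF13} is exactly \eqref{Cint}--\eqref{gint} of Assumption \ref{hyp:controllo_g}, using $\beta>\sup|r-1|^2\ge$ nothing stronger; here one has to be a little careful to record that $\sup|r-1|^2\le L^2$ so that taking $\beta>L^2$ is consistent with Assumption \ref{hyp:controllo_g}. The measurability of $\tilde f$ in $\om,t$ (predictable) and its measurable dependence on $\theta$ is inherited from the $\Pscr(\Gbb)\otimes\Bscr(\R^d)\otimes\calu$-measurability of $r,l$ together with Assumption \ref{hyp:hamiltoniana}, which provides the measurable minimizing selector $\underline u^\Theta$ — this selector is not strictly needed for existence/uniqueness of the BSDE but guarantees that the infimum in \eqref{defhamiltonian} is attained along a predictable control, which is what keeps $\tilde f$ within the class covered by \cite{CF13} and is reused in the verification theorem.

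Having matched all hypotheses, the third step is simply to invoke \cite[Theorem 3.4]{CF13}: it yields a unique pair $(Y,\Theta)$ in $L^{2,\beta}_{\text{Prog}}(\Omega\times[0,T],\Gbb)\times L^{2,\beta}(\mu^Z,\Gbb)$ solving the BSDE in the form with driving measure $\phi_t\rmd C_t$; translating back via \eqref{f:CG}--\eqref{f:CG2} gives \eqref{bsdecontrollo} verbatim, and the solution spaces are literally the ones defined just above Assumption \ref{hyp:hamiltoniana}. The main obstacle, and the only place demanding genuine care, is the \emph{bookkeeping between the two parametrizations} of the compensator: the generator in \eqref{defhamiltonian} is written against $\phi^{\Gbb,X}_t(\rmd x_1)\rmd C^{\Gbb,X}_t$ and only sees the $x_2=0$ block, whereas \cite{CF13} expects a generator against the full $\phi_t\rmd C_t$; one must check that introducing the factor $d_1$ is harmless (it is, since $0\le d_1\le 1$ and it multiplies a Lipschitz, bounded-at-zero generator) and that the Lipschitz constant picked up is $\le L$ so that the threshold $\beta>L^2$ is exactly the one needed. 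Everything else — continuity of $C$, the representation property, finiteness of $\E[\rme^{\beta C_T}]$ — is already supplied by Section \ref{sec:PRP}, Remark \ref{R:contC}, and Assumption \ref{hyp:controllo_g}.
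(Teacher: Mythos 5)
Your proposal follows essentially the same route as the paper: reduce \eqref{bsdecontrollo} to the canonical form of \cite{CF13} by setting $\tilde f = d_1 f$ so that the generator is written against the full $\phi_t\,\rmd C_t$, check that $C$ is continuous (quasi-left continuity of $Z$, supplied by Assumptions \ref{A:appl} with \ref{A:appl2} or \ref{A:appl3}), verify the Lipschitz/growth/measurability hypotheses, and invoke \cite[Theorem~3.4]{CF13}. The only blemish is cosmetic: in deriving the Lipschitz estimate you bound the difference by $L\int|\theta-\theta'|\,\rmd\phi_t$, whereas \cite[Theorem~3.4]{CF13} asks for a bound of the form $L\big(\int|\theta-\theta'|^2\,\rmd\phi_t\big)^{1/2}$ — the passage should go through Cauchy--Schwarz (using that $\phi^{\Gbb,X}_t$ is a probability kernel and $d_1\le1$), still yielding the constant $L$.
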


%\subsubsection{Solution to the optimal control problem}
\textbf{Solution to the optimal control problem.}
At this point we can give	 the main result of the section.

\begin{theorem}\label{teoremacontrollononmarkov}
Let Assumptions \ref{A:appl}, \ref{hyp:controllosingle}, \ref{hyp:rl}  and
\ref{hyp:hamiltoniana} hold true. Assume also that Assumption \ref{hyp:controllo_g} holds true
with $\beta>L^2$, with $L$ in \eqref{D:L}, and  that condition \eqref{C_r} holds true. Let Assumption \ref{A:appl2} or \ref{A:appl3} holds true, and let  $(Y,\Theta)\in L^{2, \beta}_{\text{Prog}}(\Omega \times [0,\,T],\Gbb) \times L^{2, \beta}(\mu^Z,\Gbb)$  denote
 the unique  solution to  BSDE \eqref{bsdecontrollo}, with
 corresponding admissible control $\underline{u}^\Theta \in \calc$  satisfying \eqref{minselector}.
Then
$\underline{u}^\Theta$ is   optimal and $Y_0$ is the optimal cost, i.e.
$$
Y_0= J(\underline{u}^\Theta)= \inf_{u\in\calc }J(u).
$$
\end{theorem}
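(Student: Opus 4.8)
The plan is to use the standard martingale-optimality-principle argument for control problems formulated weakly via Girsanov-type changes of measure. First I would fix an arbitrary admissible control $u \in \calc$ and consider the probability measure $\Pbb_u = L_T^u \Pbb$, under which (by Lemma \ref{L:Lu} together with condition \eqref{C_r}, and the discussion following it) $\nu^{Z,u}$ in \eqref{nuZu} is the $\Gbb$-compensator of $\mu^Z$. The key object is the process
\begin{equation*}
N_t^u := Y_t + \int_0^t l_s(X_s,u_s)\,\rmd C^{\Gbb,X}_s,
\end{equation*}
where $(Y,\Theta)$ is the unique solution of BSDE \eqref{bsdecontrollo} guaranteed by Proposition \ref{Pwellpos1}. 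Using the BSDE dynamics, I would rewrite
\begin{equation*}
\rmd Y_t = -f(t,X_t,\Theta_t(\cdot))\,\rmd C^{\Gbb,X}_t + \int_{\Rbb^{d+1}}\Theta_t(x_1,x_2)\,(\mu^Z-\nu^Z)(\rmd t,\rmd x_1,\rmd x_2),
\end{equation*}
and then re-express the compensated integral with respect to $\mu^Z - \nu^{Z,u}$, picking up the extra drift term $\int_{\Rbb^{d+1}}\Theta_t(x_1,x_2)(R_t(X_t,H_t,u_t)-1)\,\nu^Z(\rmd t,\rmd x_1,\rmd x_2)$, which by \eqref{tilder} and \eqref{nuZu}–\eqref{f:CG} equals $\int_{\Rbb^d}\Theta_t(x_1,0)(r_t(X_t,u_t)-1)\phi^{\Gbb,X}_t(\rmd x_1)\,\rmd C^{\Gbb,X}_t$.

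Combining these, the drift of $N^u$ under $\Pbb_u$ is
\begin{equation*}
\Bigl[\,l_t(X_t,u_t) + \int_{\Rbb^d}\Theta_t(x_1,0)(r_t(X_t,u_t)-1)\phi^{\Gbb,X}_t(\rmd x_1) - f(t,X_t,\Theta_t(\cdot))\,\Bigr]\rmd C^{\Gbb,X}_t,
\end{equation*}
which, by the definition \eqref{defhamiltonian} of $f$ as an infimum over $u\in U$, is nonnegative $\rmd C^{\Gbb,X}_t\,\Pbb_u$-a.e. for \emph{every} admissible $u$, and is identically zero when $u = \underline{u}^\Theta$ by the minimizing-selector property \eqref{minselector}. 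Hence $N^u$ is a $\Pbb_u$-submartingale for every $u\in\calc$, and a $\Pbb_u$-martingale for $u=\underline u^\Theta$ — provided the stochastic-integral part is a genuine $\Pbb_u$-martingale and not merely a local one. Granting integrability, taking $\Pbb_u$-expectations between $0$ and $T$ and using the terminal condition $Y_T = g(X_T)$ gives $Y_0 = N_0^u \le \Ebb_u[N_T^u] = J(u)$ for all $u$, with equality for $u = \underml u^\Theta$; this yields $Y_0 = J(\underml u^\Theta) = \inf_{u\in\calc} J(u)$.

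The main obstacle is the integrability/true-martingale argument: one must show that $M_t^u := \int_0^t\int_{\Rbb^{d+1}}\Theta_s(x_1,x_2)(\mu^Z-\nu^{Z,u})(\rmd s,\rmd x_1,\rmd x_2)$ is a uniformly integrable $\Pbb_u$-martingale on $[0,T]$ so that the submartingale inequality $Y_0 \le \Ebb_u[N_T^u]$ is valid (and an equality for the optimal control). This is where the exponential-moment conditions enter: $\Theta\in L^{2,\beta}(\mu^Z,\Gbb)$ controls $\Theta$ under $\Pbb$ with the weight $\rme^{\beta C_t}$; condition \eqref{C_r} (via Lemma \ref{L:Lu}) gives $\sup_{t}\Ebb[|L_t^u|^2]<\infty$; and the requirement $\beta > L^2$ with $L$ as in \eqref{D:L} is exactly what makes a Cauchy–Schwarz estimate of $\Ebb_u[\,\langle M^u\rangle_T\,] = \Ebb[L_T^u \langle M^u\rangle_T]$ finite after bounding the density change $R$ by $L$ in the compensator. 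I would carry this out by first estimating $\Ebb_u\bigl[\int_0^T\int_{\Rbb^{d+1}}|\Theta_s(x_1,x_2)|^2\,\nu^{Z,u}(\rmd s,\rmd x_1,\rmd x_2)\bigr]$ using $0\le r\le M_r$, the bound $|R-1|\le L$, Hölder with exponents matching $L^2$ and $\beta$, and the finiteness of $\Ebb[\rme^{\beta C_T}]$ from \eqref{Cint}; the analogous bound on the terminal cost uses \eqref{gint} and \eqref{GintPu}. Once $M^u\in\Hscr^1(\Pbb_u)$ (indeed $\Hscr^2$), the optional stopping / submartingale argument closes and the theorem follows.
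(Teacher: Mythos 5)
Your proposal is correct and takes essentially the same route as the paper: the paper simply takes $\E_u$-expectations of BSDE \eqref{bsdecontrollo} at $t=0$, checks that $\Theta\in L^{1,0}(\mu^Z,\Gbb)$ under $\P_u$ (H\"older together with $\sup_t\E[|L^u_t|^2]<\infty$ from Lemma \ref{L:Lu} and \eqref{C_r}) so that the compensated integral has zero $\P_u$-expectation, adds and subtracts $\E_u[\int_0^T l_s(X_s,u_s)\,\rmd C^{\Gbb,X}_s]$ to produce $J(u)$, and observes the remaining integrand is nonpositive by \eqref{defhamiltonian} and vanishes for $u=\underline{u}^\Theta$ by \eqref{minselector} — which is your submartingale/martingale dichotomy for $N^u$ stated in expectation form. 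One small misattribution in your sketch: the hypothesis $\beta>L^2$ does not drive the Cauchy--Schwarz integrability estimate in the verification step (that is closed by $\Theta\in L^{2,\beta}(\mu^Z,\Gbb)$ and the square-integrability of $L^u_T$ from \eqref{C_r}); rather $\beta>L^2$ is needed for the Lipschitz bound that makes Proposition \ref{Pwellpos1} (existence and uniqueness of the BSDE solution) work.
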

\proof
The proof 
%follows the lines of the one of \cite[Theorem 4.6]{CF13}, and 
consists in proving the so-called fundamental relation.
We first recall that, by Lemma \ref{L:Lu},  for every $u \in \calc$, we have	$\sup_{t \in [0,\,T]} \E [|L^{ u}_t|^2] < \infty$.
Moreover, by \eqref{GintPu}, $\Ebb_u[|g(X_T)|]<+\infty$.
Let $u \in \calc$ be fixed.
Then, H\"older inequality and Assumption \ref{hyp:rl} yield $\Theta(\cdot) \in L^{1,0}(\mu^Z,\Gbb)$ under $\P_u$.
Setting $t=0$ and taking the expectation $\E_u[\cdot]$ in BSDE \eqref{bsdecontrollo}, we get
\begin{align*}
    Y_0+\E_u\big[\int_0^T\int_{\R^{d}} \Theta(x_1,0) \, (r_s
(x_1,u)-1)\,\phi^{\Gbb, X}_s(\rmd x_1)\rmd C^{\Gbb, X}_s\big]=
\E_u[g(X_T)] +\E_u\big[\int_0^T f(s,X_s,\Theta_s(\cdot))\,\rmd C^{\Gbb, X}_s\big].
\end{align*}
Then, adding and subtracting $\E_u\big[\int_0^T l_s(X_s,u)\,\rmd C^{X,\Gbb}_s\big]$, we obtain
\begin{align*}
    &Y_0= J(u) \\
&+\E_u\Big[\int_0^T \big[f(s,X_{s},\Theta_s(\cdot))-l(s,X_{s},u) -\int_{\R^{d}} \Theta_s(x_1,0) \, (r_s
(x_1,u)-1)\,\phi^{\Gbb, X}_s(\rmd x_1) \big]\rmd C^{\Gbb, X}_s\Big]
\end{align*}
where we have also used the continuity of $C$. The conclusion follows from the definition of $f$ in \eqref{defhamiltonian}, noticing that the term in the square brackets is non positive, and it equals $0$ if $u(\cdot) = \underline{u}^\Theta(\cdot)$.
\endproof

\subsection{The control problem on $[0,\,T \wedge \tau]$}
We now consider the problem of an agent for whom the available information is exclusively given by $\Fbb$
(that is, she pursues $\Fbb$-predictable strategies) but, for some reasons, she has only access to the
market up to the occurrence of the exogenous shock event, whose occurrence time is modelled
by $\tau$.  For example, the problem over $[0,T\wedge\tau]$ can be regarded as the optimization problem of an agent who minimizes running costs not up to the maturity $T>0$, but only up to $T\wedge\tau$.

For simplicity we consider in this section only the case where Assumptions \ref{A:appl}-\ref{A:appl2} are satisfied, so that, according to Remark \ref{equivdec},   $\rmd C^{\F, X}=\rmd C^{\Gbb, X}$ and $\phi^{\F, X}(\rmd x_1)=\phi^{\Gbb, X}(\rmd x_1)$.
In this context, this seems  to be a natural assumption. Indeed, let Jacod's assumption hold for $\tau$. Then, by \cite[Corollary 3.1]{JLC09}, $\tau$ satisfies $(\Hscr)$ if and only if $p_\cdot(u)$ is constant after $u$, that is, $p_t(u)=p_t(t)$, $t\geq u$, a.s. As observed in \cite[p.1016]{EJJ10}, this is substantially equivalent to say that the ``\emph{information contained in the reference filtration after the default time gives no new information on the conditional distribution of the default}''. But, since we restrict our attention to $[0,T\wedge\tau]$, that is, before the default, we are neglecting all information after default.

We still consider a measurable space  $(U, \mathcal U)$ satisfying Assumption \ref{hyp:controllosingle}.
The other data specifying the optimal control problem are
a running cost function $\bar l$, a terminal cost function $\bar g$,
and a  function $\bar r$, that are assumed to satisfy the following conditions.

\begin{assumption}\label{hyp:bar_rl}
 The functions $\bar r,\bar l:\Omega\times [0,T]\times \R^{d}\times U\to \R$
are $\calp(\Fbb)\otimes \mathcal B(\R^{d})\otimes \calu$-measurable and
 there exist
constants $M_{\bar r}> 1$, $M_{\bar l}>0$ such that, $\P$-a.s.,
\begin{equation}\label{bar_ellelimitato}
0\le \bar r_t (x_1,u)\le M_{\bar r},\quad |\bar l_t (x_1,u)|\le M_{\bar l},\qquad t\in [0,T],\,
x_1 \in \R^d,\, u\in U.
\end{equation}
\end{assumption}
\normalcolor

\begin{assumption}\label{hyp:controllo_barg_sigma}
The function $\bar g:\Omega\times \R^{d}\to \R$ is $\calg_{T \wedge \tau}\otimes
\mathcal B(\R^{d})$-measurable, and there exists a constant $\beta$ such that $\beta > \sup|\bar r-1|^2$,  and
\begin{align}
&\E [\rme^{ \beta C_{T}}] < + \infty  \label{Ctau 2}, \\
&\E [|\bar g(X_{T \wedge \tau})|^2 \rme^{\beta C_{T}} ] < + \infty.\label{g tau 2}
 \end{align}
\end{assumption}

Let $\calc$ be the set of admissible strategies for the optimization problem
 introduced in Section \ref{S:T}. For any $u \in \calc$, we define $\hat{u}:= 1_{[0,T \wedge \tau ]}u$. Clearly
$\hat u \in \calc$  holds true. We define now
the new set  of admissible strategies as
\begin{equation}\label{hatA}
\hat{\calc}:= \{ u \in \calc : 1_{[T \wedge \tau ,T]}u= 0\} \subseteq \calc.
\end{equation}
 Since $\Fbb$-predictable and $\Gbb$-predictable processes coincide on $[0, \tau ]$ (see \cite[Lemma 4.4. b)]{Jeu80}), the set  $\hat{\calc}$
given in \eqref{hatA}  consists of strategies which are morally
 $\Fbb$-predictable.

To every control $\hat u\in\hat{\calc}$, we  associate the predictable random measure
%\begin{align}\label{nuZuhat}
$
	\nu^{Z,\hat u}(\om, \rmd t, \rmd x_1, \rmd x_2)$ of the same form of \eqref{nuZu}.
We have $\nu^{Z, \hat u}=(\bar r_t(X_t,  \hat u_t)d_1 + d_2)\nu^Z$, where $d_1$ and $d_2$ are the densities in \eqref{d1}-\eqref{d2}.

For any $\hat u\in\hat{\calc}$, under condition \eqref{C_r} with $M_{r}$ replaced by $M_{\bar r}$, we can  consider then    the Dol\'{e}ans-Dade exponential
 martingale  $L^{\hat u}$  in \eqref{Lu}, and we can introduce the absolutely continuous probability measure $\P_{\hat u}$ defined as
$\P_{\hat u}(\rmd \omega)=L_T^{\hat u}(\omega)\P(\rmd \omega)$.
We then consider a cost functional of the form
\begin{equation}\label{J2bis}
\bar J(\hat u)= \E_{\hat u}\left[
\int_0^{T\wedge \tau} \bar l_t(X_t,\hat u_t)\rmd C^{\F, X}_t + \bar g(X_{T\wedge \tau})\right],\quad \hat u \in \hat{\calc},
\end{equation}
where $\E_{\hat u}$ denotes the expectation under $\P_{\hat u}$.
The control problem is now
\begin{equation}\label{contr_pb_2}
\inf_{\hat u \in \hat {\mathcal C}} \bar J(\hat u)=\inf_{\hat u \in \hat {\mathcal C}} \E_{\hat u}\left[
\int_0^{T\wedge \tau} \bar l_t(X_t,\hat u_t)\rmd C^{\F, X}_t + \bar g(X_{T\wedge \tau})\right].
\end{equation}

\begin{remark}\label{R:2}
The  control problem in \eqref{contr_pb_2} can be interpreted as the one of an agent who only controls $X$ using $\Fbb$-predictable strategies but only up to the occurrence $\tau$ of an external risky event. Hence, because of the exogenous risk source, this control problem cannot be solved in $\Fbb$.
\end{remark}

\textbf{The associated BSDE.}
The optimal control problem in \eqref{contr_pb_2} can be solved by means of the following BSDE:
  $\P$-a.s., for all $t\in [0,T]$,

	\begin{align}\label{bsdecontrollo_2}
    &R_{t}+\int_{t \wedge \tau}^{T \wedge \tau}\int_{\R^{d+ 1}} \Sigma_s(x_1,x_2)\,(\mu^Z(\rmd s,\rmd x_1,\rmd x_2)- \nu^Z(\rmd s,\rmd x_1,\rmd x_2)) \notag\\
&=\bar g(X_{T \wedge \tau}) +\int_{t \wedge\tau}^{T \wedge \tau} \bar f(s,X_s,\Sigma_s(\cdot))\,\rmd C^{\F, X}_s.
\end{align}
with
\begin{equation}\label{defhamiltonian2}
    \bar f(\omega,t,y_1,\theta(\cdot))=\inf_{u\in U}\Big\{
\bar l_t(\omega, y_1,u)+ \int_{\R^{d}} \theta(x_1,0) \, (\bar r_t
(\omega,x_1,u)-1)\,\phi_t^{\F, X}(\omega,\rmd x_1)\Big\}
\end{equation}
for every $\omega\in\Omega$, $t\in[0,T]$, $y_1 \in \R^d$,  and $\theta\in
\call^1(\R^{d},\mathcal B(\R^{d}), \phi_t(\omega,\rmd x_1, \rmd x_2))$.
\begin{assumption}\label{hyp:hamiltoniana_2_new}
For every $\Theta\in L^{1,0}(\mu^Z)$ there exists %a $\Fbb$-predictable process
$\underline{\hat u}^\Theta \in \hat \calc$ %:\Omega\times [0,T]\to U$,
 such that, for almost all $(\omega, t)$ with respect to the measure $d_1(\om, t)dC_t(\omega)\P(d\omega)$,
\begin{align}\label{minselector_bis_new}
    \bar f(\omega,t,X_{t-}(\omega),\Theta_t(\omega, \cdot))&=
\bar l_t(\om, X_{t-}(\omega),\underline{\hat u}^\Theta(\omega,t ))\\
&+ \int_{\R^{d+ 1}} \Theta_t(\omega, x_1, 0) \,
(\bar r_t
(\omega,x_1, \underline{\hat u}^\Theta(\omega,t))-1)\,\phi_t^X(\omega,\rmd x_1).\notag
\end{align}
\end{assumption}

In order to prove existence and uniqueness for BSDE \eqref{bsdecontrollo_2} we will use the following auxiliary equation: $\P$-a.s., for all $t\in [0,T]$,
\begin{align}\label{bsdecontrollo_barg}
    &\bar Y_{t}+\int_{t }^{T}\int_{\R^{d+ 1}} \bar \Theta_s(x_1,x_2)\,(\mu^Z- \nu^Z)(\rmd s,\rmd x_1,\rmd x_2) \notag\\
&=\bar g(X_{T \wedge \tau}) +\int_{t}^{T} \bar f (s,X_s,\bar \Theta_s(\cdot))1_{[0,T \wedge \tau]}(s)\,\rmd C^{\F, X}_s.
\end{align}
The proofs of the  two following results are postponed to Appendix \ref{A:B}.
\begin{proposition}\label{P:5.20}
	Let Assumptions \ref{A:appl}, \ref{A:appl2}, \ref{hyp:controllosingle}, \ref{hyp:bar_rl} and \ref{hyp:hamiltoniana_2_new} hold. Set
		\begin{align}\label{D:Lbis}
 \bar L:=\textup{ess}\sup_{\omega} \big( \sup\{ |\bar r_t (x_1, u)-1|\,:\,
  t\in [0,T], \,x_1 \in \R^d, \,u\in U\} \big),
\end{align}
and let  Assumption
\ref{hyp:controllo_barg_sigma}  hold true with $\beta >\bar L^2$.
Then BSDE \eqref{bsdecontrollo_barg} admits a unique solution $(\bar Y, \bar \Theta(\cdot))\in  L^{2, \beta}_{\text{Prog}}(\Omega \times [0,\,T],\Gbb) \times L^{2, \beta}(\mu^Z,\Gbb)$.

\end{proposition}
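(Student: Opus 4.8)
The plan is to recast \eqref{bsdecontrollo_barg} as a BSDE over the deterministic horizon $[0,T]$ driven by $\mu^Z$ in the canonical form treated in \cite{CF13}, and then to invoke the existence-and-uniqueness theorem therein. Under Assumptions \ref{A:appl}--\ref{A:appl2} one has $\phi^{\F,X}=\phi^{\Gbb,X}$ and $\rmd C^{\F,X}=\rmd C^{\Gbb,X}$ by Remark \ref{equivdec}, and $\rmd C^{\F,X}_t=\rmd C^{\Gbb,X}_t=d_1(t)\,\rmd C_t$ by \eqref{f:CG}, where $C=C^{\Gbb,X}+\Lambda^\Gbb$ is continuous (Remark \ref{R:contC}). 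Substituting this into \eqref{bsdecontrollo_barg} rewrites it, $\PP$-a.s.\ for $t\in[0,T]$, as
\begin{equation*}
\bar Y_t+\int_t^T\!\!\int_{\R^{d+1}}\bar\Theta_s(x_1,x_2)\,(\mu^Z-\nu^Z)(\rmd s,\rmd x_1,\rmd x_2)=\bar g(X_{T\wedge\tau})+\int_t^T\hat f(s,\bar\Theta_s(\cdot))\,\rmd C_s,
\end{equation*}
with \emph{merged generator} $\hat f(\om,s,\theta(\cdot)):=\bar f(s,X_{s-}(\om),\theta(\cdot))\,1_{[0,T\wedge\tau]}(\om,s)\,d_1(\om,s)$; the replacement of $X_s$ by $X_{s-}$ is immaterial because, $C$ being continuous, $\rmd C$ charges none of the (countably many) jump times of $X$. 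Observe that $\hat f$ does not depend on the solution component $\bar Y$.

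I would then check the hypotheses of \cite{CF13} for $\hat f$ and the terminal datum $\bar g(X_{T\wedge\tau})$. \emph{Measurability:} $\bar r,\bar l$ are $\calp(\Fbb)\otimes\mathcal B(\R^d)\otimes\calu$-measurable, hence $\calp(\Gbb)\otimes\mathcal B(\R^d)\otimes\calu$-measurable, by Assumption \ref{hyp:bar_rl}; $X_-$ is $\Gbb$-predictable; $1_{[0,T\wedge\tau]}$ is $\Gbb$-predictable since $T\wedge\tau$ is a $\Gbb$-stopping time; and $d_1$ is $\Gbb$-predictable because $\phi$ is a $\calp(\Gbb)$-measurable transition kernel; thus $\hat f$ has the joint measurability required in \cite{CF13}. \emph{Lipschitz property:} the Hamiltonian \eqref{defhamiltonian2} is an infimum over $u$ of maps affine in $\theta$, whence $|\bar f(s,y_1,\theta)-\bar f(s,y_1,\theta')|\le\bar L\int_{\R^d}|\theta(x_1,0)-\theta'(x_1,0)|\,\phi_s^{\F,X}(\rmd x_1)$ with $\bar L$ as in \eqref{D:Lbis}; multiplying by $1_{[0,T\wedge\tau]}(s)\,d_1(s)$ and using $1_{\{x_2=0\}}\,\nu^Z(\rmd s,\rmd x_1,\rmd x_2)=\delta_0(\rmd x_2)\,\phi_s^{\F,X}(\rmd x_1)\,\rmd C^{\F,X}_s$ (from \eqref{nuZ2}, \eqref{nuZbis} and \eqref{f:CG}) gives $|\hat f(\om,s,\theta)-\hat f(\om,s,\theta')|\le\bar L\int_{\R^{d+1}}|\theta-\theta'|\,\phi_s(\rmd x_1,\rmd x_2)$, $\rmd C_s\otimes\PP$-a.e., so $\hat f$ is Lipschitz in the $L^1(\phi_s)$-norm with constant $\bar L$. \emph{Generator at zero:} $|\hat f(t,0)|=|\inf_{u\in U}\bar l_t(X_{t-},u)|\,1_{[0,T\wedge\tau]}(t)\,d_1(t)\le M_{\bar l}$ since $0\le d_1\le1$, while $\E\big[\int_0^T\rme^{\beta C_t}\,\rmd C_t\big]=\beta^{-1}\,\E\big[\rme^{\beta C_T}-1\big]<\infty$ by \eqref{Ctau 2} and the continuity of $C$, whence $\E\big[\int_0^T\rme^{\beta C_t}|\hat f(t,0)|^2\,\rmd C_t\big]<\infty$. \emph{Terminal condition:} $\bar g(X_{T\wedge\tau})$ is $\calg_{T\wedge\tau}$- hence $\calg_T$-measurable by Assumption \ref{hyp:controllo_barg_sigma}, and $\E[|\bar g(X_{T\wedge\tau})|^2\rme^{\beta C_T}]<\infty$ by \eqref{g tau 2}.

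The structural inputs needed to invoke \cite{CF13} are that $\mu^Z$ is a $\Gbb$-marked point process which is $\Gbb$-quasi-left-continuous with compensator $\nu^Z=\phi_t(\rmd x_1,\rmd x_2)\,\rmd C_t$ and $C$ continuous (this is Theorem \ref{thm:qlc.Z.A.H} together with \eqref{nuZ2} and \eqref{nuZbis} under Assumptions \ref{A:appl}--\ref{A:appl2}), and that $\mu^Z$ has the weak representation property with respect to $\Gbb_T$ (Theorem \ref{thm:wrp.G}). Under these, the estimates above together with the choice $\beta>\bar L^2$ place us exactly in the framework of \cite{CF13}, which then yields a unique solution $(\bar Y,\bar\Theta(\cdot))\in L^{2,\beta}_{\text{Prog}}(\Om\times[0,T],\Gbb)\times L^{2,\beta}(\mu^Z,\Gbb)$, which is the assertion. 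I expect the only genuinely delicate step to be the recasting above, i.e.\ the change of driving process from $C^{\F,X}$ (equivalently $C^{\Gbb,X}$) to the merged clock $C=C^{\Gbb,X}+\Lambda^\Gbb$: one must track, via the densities $d_1,d_2$ of \eqref{d1}--\eqref{d2} and the identities \eqref{f:CG}--\eqref{f:CG2}, how both the generator and the $\phi_t$-norms defining the solution spaces transform, and verify that the $\Gbb$-predictability of $\hat f$ survives the truncation by $1_{[0,T\wedge\tau]}$; once this normalization is in place, the remaining verifications are routine.
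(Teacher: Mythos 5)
Your proof is correct and takes essentially the same approach as the paper: you recast \eqref{bsdecontrollo_barg} as a BSDE over $[0,T]$ with the merged generator $\hat f = d_1\,\bar f(\cdot,X_{-},\cdot)\,1_{[0,T\wedge\tau]}$, which is exactly the paper's $\bar F$ up to the immaterial replacement of $X$ by $X_{-}$, and then verify the hypotheses of \cite[Theorem 3.4]{CF13} (measurability via Assumption \ref{hyp:hamiltoniana_2_new}, Lipschitz continuity with constant $\bar L$, generator-at-zero integrability, terminal condition). The only cosmetic difference is that you derive the $L^1(\phi_t)$-Lipschitz bound directly, which --- $\phi_t$ being a probability kernel --- dominates the $L^2$-type estimate the paper records in the form of \eqref{Lipscond}.
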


\begin{theorem}\label{T:new}
Let Assumptions \ref{A:appl}, \ref{A:appl2}, \ref{hyp:controllosingle}, \ref{hyp:bar_rl}
and \ref{hyp:hamiltoniana_2_new}
 hold true. Assume also that Assumptions  and \ref{hyp:controllo_barg_sigma} hold true with $\beta >\bar L^2$, with $\bar L$ in \eqref{D:Lbis}, and let  $(\bar Y, \bar \Theta(\cdot)) \in  L^{2, \beta}_{\text{Prog}}(\Omega \times [0,\,T],\Gbb) \times L^{2, \beta}(\mu^Z,\Gbb)$ denote
 the unique solution to  BSDE
  \eqref{bsdecontrollo_barg}.
  Then
  the BSDE \eqref{bsdecontrollo_2}
admits a unique solution given by $(R, \Sigma)=(\bar Y_{\cdot \wedge  \tau},  \bar \Theta 1_{[0,T \wedge \tau]})
$.
In particular, $\bar Y=\bar Y_{\cdot \wedge \tau}$, $%\rmd C_t(\omega)
\P(\rmd \omega)$-a.e. and $\bar \Theta = \bar \Theta 1_{[0,T \wedge \tau]}$, $\phi_t(\omega, \rmd x_1, \rmd x_2)\,\rmd C_t(\omega) \,\P(\rmd \omega)$-a.e.
\end{theorem}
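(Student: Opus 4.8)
The plan is to reduce the random-horizon BSDE \eqref{bsdecontrollo_2} to the deterministic-horizon auxiliary BSDE \eqref{bsdecontrollo_barg}, whose existence and uniqueness is guaranteed by Proposition \ref{P:5.20}. The crucial structural fact is that the generator in \eqref{bsdecontrollo_barg} carries the indicator $1_{[0,T\wedge\tau]}$, and that under Assumptions \ref{A:appl}--\ref{A:appl2} the process $C=C^{\Gbb,X}+\Lm^\Gbb$ is continuous, so that stopping at $\tau$ interacts cleanly with the stochastic integral against $\mu^Z-\nu^Z$. First I would take the unique solution $(\bar Y,\bar\Theta(\cdot))$ of \eqref{bsdecontrollo_barg} and stop it at $\tau$: set $R:=\bar Y_{\cdot\wedge\tau}$ and $\Sigma:=\bar\Theta 1_{[0,T\wedge\tau]}$. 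Evaluating \eqref{bsdecontrollo_barg} at $t$ and at the stopping time $T\wedge\tau$ and subtracting (equivalently, stopping the whole identity at $\tau$), the terminal value $\bar g(X_{T\wedge\tau})$ is unaffected because it is already $\Gscr_{T\wedge\tau}$-measurable, the generator integral collapses from $\int_t^T \bar f(\cdot)1_{[0,T\wedge\tau]}\,\rmd C^{\F,X}$ to $\int_{t\wedge\tau}^{T\wedge\tau}\bar f(\cdot)\,\rmd C^{\F,X}$, and the martingale part $\int_t^T\!\!\int \bar\Theta\,(\mu^Z-\nu^Z)$ becomes $\int_{t\wedge\tau}^{T\wedge\tau}\!\!\int \bar\Theta\,(\mu^Z-\nu^Z)=\int_{t\wedge\tau}^{T\wedge\tau}\!\!\int \Sigma\,(\mu^Z-\nu^Z)$, using that $\mu^Z$ and $\nu^Z$ put no mass strictly after $\tau$ against $1_{[0,T\wedge\tau]}$ and that, by the definition of $\bar f$ in \eqref{defhamiltonian2}, only the $x_2=0$ component of $\bar\Theta$ enters. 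This shows $(R,\Sigma)$ solves \eqref{bsdecontrollo_2}, and the required membership $(R,\Sigma)\in L^{2,\beta}_{\mathrm{Prog}}\times L^{2,\beta}(\mu^Z,\Gbb)$ follows since stopping and multiplying by an indicator only decrease the relevant norms.

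For uniqueness of the solution of \eqref{bsdecontrollo_2}, I would argue conversely: given any solution $(R,\Sigma)$ of \eqref{bsdecontrollo_2} in the stated space, one checks that $R$ is constant on $[T\wedge\tau,T]$ and $\Sigma=\Sigma 1_{[0,T\wedge\tau]}$ automatically (the equation only involves the interval $[t\wedge\tau,T\wedge\tau]$, and outside it the process must be flat), and then that the pair $(\bar Y,\bar\Theta):=(R,\Sigma)$, extended by the constant value past $\tau$, solves the deterministic-horizon BSDE \eqref{bsdecontrollo_barg}; uniqueness for \eqref{bsdecontrollo_barg} from Proposition \ref{P:5.20} then forces $(\bar Y,\bar\Theta)$ — hence $(R,\Sigma)$ — to be the one constructed above. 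The two displayed identities at the end of the statement, $\bar Y=\bar Y_{\cdot\wedge\tau}$ $\P$-a.e.\ and $\bar\Theta=\bar\Theta 1_{[0,T\wedge\tau]}$ $\phi_t\,\rmd C_t\,\P$-a.e., are then read off from the equality $(\bar Y,\bar\Theta)=(R,\Sigma)=(\bar Y_{\cdot\wedge\tau},\bar\Theta 1_{[0,T\wedge\tau]})$.

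I expect the main obstacle to be the careful justification that stopping the stochastic integral $\int_0^\cdot\!\!\int_{\R^{d+1}}\bar\Theta\,(\mu^Z-\nu^Z)$ at $\tau$ yields exactly $\int_0^{\cdot\wedge\tau}\!\!\int \bar\Theta 1_{[0,T\wedge\tau]}\,(\mu^Z-\nu^Z)$ with no boundary jump at $\tau$ left over. This is where continuity of $\Lm^\Gbb$ (equivalently, condition $(\Ascr)$, guaranteeing $H$ is $\Gbb$-quasi-left-continuous and $\tau$ totally inaccessible) and continuity of $C^{\Gbb,X}$ from Theorem \ref{thm:qlc.Z.A.H} are essential: they ensure $\nu^Z(\{T\wedge\tau\}\times\R^{d+1})=0$, so the compensated integral does not acquire a spurious atom at the horizon, and the predictable stopping-at-$\tau$ manipulations in \cite[Lemma 4.4 b)]{Jeu80} on $[0,\tau]$ versus $(\tau,T]$ apply without correction terms. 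A secondary technical point is verifying that any solution of \eqref{bsdecontrollo_2} is necessarily flat after $\tau$ in the right topology; this uses the square-integrability built into $L^{2,\beta}_{\mathrm{Prog}}\times L^{2,\beta}(\mu^Z,\Gbb)$ together with the fact that the driver and the driving measure are both supported on $[0,T\wedge\tau]$ in the relevant sense.
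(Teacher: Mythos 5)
Your plan is essentially the same as the paper's own proof: stop the solution of \eqref{bsdecontrollo_barg} at $\tau$, check that the stopped pair $(\bar Y_{\cdot\wedge\tau},\bar\Theta 1_{[0,T\wedge\tau]})$ solves \eqref{bsdecontrollo_2}, then observe that, because of the indicator $1_{[0,T\wedge\tau]}$, the very same pair also solves \eqref{bsdecontrollo_barg}, and finally invoke the uniqueness of Proposition \ref{P:5.20} to obtain $\bar Y = \bar Y_{\cdot\wedge\tau}$ and $\bar\Theta = \bar\Theta 1_{[0,T\wedge\tau]}$.

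Two small remarks on execution. First, the paper does not actually ``stop the identity at $\tau$'' directly: it first rewrites $\bar Y$ via the conditional-expectation formula together with the weak representation of Theorem \ref{thm:wrp.G}, and then applies Doob's optional stopping to the compensated integral martingale. Your shortcut of substituting $t\mapsto t\wedge\tau$ into the pathwise identity \eqref{bsdecontrollo_barg} (valid for every $t$ a.s.) works too, but you should still verify that
$\int_0^{t\wedge\tau}\!\!\int \bar\Theta\,d(\mu^Z-\nu^Z)=\int_0^t\!\!\int \bar\Theta\,1_{[0,\tau]}(s)\,d(\mu^Z-\nu^Z)$,
which holds because $1_{[0,\tau]}$ is $\Gbb$-predictable; the continuity of $\Lambda^\Gbb$ and $C^{\Gbb,X}$ is what you correctly flag as preventing a residual atom at $\tau$, but the bulk of the bookkeeping is just predictability of the indicator. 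Second, your claim that ``$\Sigma=\Sigma 1_{[0,T\wedge\tau]}$ automatically'' for any solution of \eqref{bsdecontrollo_2} is not quite right as stated: the equation simply does not constrain $\Sigma$ on $(\tau,T]$, so uniqueness there is a normalization built into the theorem's statement rather than a consequence. Your instinct to prove uniqueness of \eqref{bsdecontrollo_2} by extending a generic solution to a solution of \eqref{bsdecontrollo_barg} is a reasonable supplement; the paper itself establishes existence and the ``in particular'' identities via the detour through \eqref{bsdecontrollo_barg} and relies on the same uniqueness mechanism you describe, without spelling out this converse step.
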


\textbf{Solution to the optimal control problem.}
We can then give following result, that is the analogous  of Theorem \ref{teoremacontrollononmarkov} in the present framework.
\begin{theorem}\label{T:cont2_bis}
Let  Assumptions \ref{A:appl}  \ref{A:appl2}, \ref{hyp:controllosingle}, \ref{hyp:bar_rl} and
\ref{hyp:hamiltoniana_2_new} hold true. Let also Assumption \ref{hyp:controllo_barg_sigma}  hold true
with $\beta>\bar L^2$, with $\bar L$ in \eqref{D:Lbis}, and   condition \eqref{C_r} hold true with $M_{\bar r}$ in place of $M_{r}$. Let  $(\bar Y, \bar \Theta)=(\bar Y_{\cdot \wedge  \tau},  \bar \Theta 1_{[0,T \wedge \tau]})\in L^{2, \beta}_{\text{Prog}}(\Omega \times [0,\,T],\Gbb) \times L^{2, \beta}(\mu^Z,\Gbb)$  denote
 the unique  solution to  BSDE \eqref{bsdecontrollo_2}, with
 corresponding admissible control $\underline{\hat u}^\Theta \in \hat\calc$  satisfying \eqref{minselector_bis_new}.
Then
$\underline{\hat u}^\Theta$ is   optimal and $\bar Y_0$ is the optimal cost, i.e.
$$
\bar Y_0= \bar J(\underline{\hat u}^\Theta)=
\inf_{\hat u\in\hat \calc }\bar J(\hat u).
$$
\end{theorem}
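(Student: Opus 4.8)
The plan is to mimic the structure of the proof of Theorem~\ref{teoremacontrollononmarkov}, but now working on the random horizon $[0,T\wedge\tau]$ and exploiting the reduction provided by Theorem~\ref{T:new}. First I would fix an arbitrary admissible control $\hat u\in\hat\calc$ and note that, by Lemma~\ref{L:Lu} (applied with $M_{\bar r}$ in place of $M_r$, which is licit thanks to condition \eqref{C_r}), $L^{\hat u}$ is a square-integrable $\Gbb$-martingale and the measure $\P_{\hat u}$ is well defined; moreover $\bar g(X_{T\wedge\tau})$ is $\P_{\hat u}$-integrable by the Cauchy--Schwarz estimate \eqref{GintPu} (with $\bar g$, $T\wedge\tau$, $M_{\bar r}$) together with \eqref{g tau 2}, and the running cost term is $\P_{\hat u}$-integrable by Assumption~\ref{hyp:bar_rl} exactly as in the bound displayed after \eqref{GintPu}. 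By the same Cauchy--Schwarz argument combined with Assumption~\ref{hyp:bar_rl}, $\bar\Theta(\cdot)\in L^{1,0}(\mu^Z,\Gbb)$ under $\P_{\hat u}$, so all integrals below are well defined.

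Next I would take the BSDE \eqref{bsdecontrollo_2} for the solution $(R,\Sigma)=(\bar Y_{\cdot\wedge\tau},\bar\Theta 1_{[0,T\wedge\tau]})$ provided by Theorem~\ref{T:new}, set $t=0$, and take $\E_{\hat u}[\,\cdot\,]$. Under $\P_{\hat u}$ the $\Gbb$-compensator of $\mu^Z$ is $\nu^{Z,\hat u}=(\bar r_t(X_t,\hat u_t)d_1+d_2)\nu^Z$, so the stochastic-integral term is no longer a $\P_{\hat u}$-martingale; rewriting $\mu^Z-\nu^Z=(\mu^Z-\nu^{Z,\hat u})+(\nu^{Z,\hat u}-\nu^Z)$ and using that the first piece is a $\P_{\hat u}$-martingale increment, its $\P_{\hat u}$-expectation produces the correction term $\E_{\hat u}\big[\int_0^{T\wedge\tau}\int_{\R^d}\bar\Theta_s(x_1,0)(\bar r_s(x_1,\hat u_s)-1)\phi^{\F,X}_s(\rmd x_1)\,\rmd C^{\F,X}_s\big]$, where I have used that the $H$-part of $\nu^Z$ carries the density $1$ (recall \eqref{tilder}) so it contributes nothing to this difference, and that $\rmd C^{\Gbb,X}=\rmd C^{\F,X}$, $\phi^{\Gbb,X}=\phi^{\F,X}$ under Assumptions~\ref{A:appl}--\ref{A:appl2} as recalled in the section. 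Rearranging and adding and subtracting $\E_{\hat u}\big[\int_0^{T\wedge\tau}\bar l_s(X_s,\hat u_s)\,\rmd C^{\F,X}_s\big]$ (using the continuity of $C$, so that the value at the random time $\tau$ raises no issue) gives the fundamental relation
\begin{align*}
\bar Y_0 &= \bar J(\hat u)\\
&\quad+\E_{\hat u}\Big[\int_0^{T\wedge\tau}\Big(\bar f(s,X_s,\bar\Theta_s(\cdot))-\bar l_s(X_s,\hat u_s)-\int_{\R^d}\bar\Theta_s(x_1,0)(\bar r_s(x_1,\hat u_s)-1)\phi^{\F,X}_s(\rmd x_1)\Big)\rmd C^{\F,X}_s\Big].
\end{align*}
By the definition \eqref{defhamiltonian2} of $\bar f$ as an infimum over $u\in U$, the integrand in square brackets is nonnegative for every $\hat u$, whence $\bar Y_0\le \bar J(\hat u)$ for all $\hat u\in\hat\calc$; and by Assumption~\ref{hyp:hamiltoniana_2_new} the minimiser $\underline{\hat u}^\Theta\in\hat\calc$ associated with $\bar\Theta$ makes the integrand vanish $d_1(\om,t)\rmd C_t\,\P$-a.e., hence $\rmd C^{\F,X}_t\,\P$-a.e.\ by \eqref{f:CG}, so $\bar Y_0=\bar J(\underline{\hat u}^\Theta)$. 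Combining the two gives $\bar Y_0=\bar J(\underline{\hat u}^\Theta)=\inf_{\hat u\in\hat\calc}\bar J(\hat u)$, as claimed.

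The main obstacle I anticipate is bookkeeping rather than conceptual: one must be careful that the selector $\underline{\hat u}^\Theta$ furnished by Assumption~\ref{hyp:hamiltoniana_2_new} indeed lies in $\hat\calc$ (so that $1_{[T\wedge\tau,T]}\underline{\hat u}^\Theta=0$ and the substitution into the cost functional $\bar J$ is legitimate), and that the optimisation inside the Hamiltonian, which is a priori phrased on all of $[0,T]$, interacts correctly with the restriction to $[0,T\wedge\tau]$. Here the key point is that on $(\tau,T]$ the integrand against $\rmd C^{\F,X}$ is irrelevant because the integral in $\bar J$ and in the fundamental relation is stopped at $T\wedge\tau$, and that $\Fbb$- and $\Gbb$-predictable processes agree on $[0,\tau]$ (\cite[Lemma 4.4.b)]{Jeu80}), so the control $\underline{\hat u}^\Theta$ may be taken $\Fbb$-predictable on $[0,\tau]$ and extended by $0$ afterwards without affecting anything. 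A secondary technical point is the passage from ``$d_1\,\rmd C_t\,\P$-a.e.'' to ``$\rmd C^{\F,X}_t\,\P$-a.e.'', which is exactly \eqref{f:CG} and causes no difficulty. Everything else is the verbatim argument of Theorem~\ref{teoremacontrollononmarkov} transported to the stopped setting.
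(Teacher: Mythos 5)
Your argument follows the paper's proof essentially verbatim: apply Lemma~\ref{L:Lu} with $M_{\bar r}$ to get the Girsanov density, check integrability, take $t=0$ and $\E_{\hat u}$ in BSDE~\eqref{bsdecontrollo_2}, add and subtract the running cost, and conclude from the variational characterization of $\bar f$. One small slip: since $\bar f$ is an infimum, the bracketed integrand $\bar f - \bar l - \int\bar\Theta(\bar r-1)\phi$ is \emph{non-positive} (not ``nonnegative'' as you wrote), which is what actually gives $\bar Y_0\le\bar J(\hat u)$; your stated conclusion is correct, so this is just a sign typo in the prose.
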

\proof
The proof 
%once again follows the lines of the one of \cite[Theorem 4.6]{CF13}, and 
consists once again  in proving the  fundamental relation.
By the analogous result of Lemma \ref{L:Lu} for the present framework,  for every $\hat u \in \hat \calc$, we have	$\sup_{t \in [0,\,T]} \E [|L^{\hat u}_t|^2] < \infty$ and $\E [L^{\hat u}_T ] = 1$.  In particular, $L^{\hat u}$ is a square integrable $\Gbb$-martingale for every $\hat u\in \hat \calc$.
Let $\hat u \in \hat \calc$ be fixed.
Proceeding as in \eqref{GintPu}, we see that $\Ebb_{\hat u}[|\bar g(X_T)|]<+\infty$, while
H\"older inequality and Assumption \ref{hyp:rl} yield $\Theta(\cdot) \in L^{1,0}(\mu^Z,\Gbb)$ under $\P_{ \hat u}$.
Setting $t=0$ and taking the expectation $\E_{ \hat u}[\cdot]$ in BSDE \eqref{bsdecontrollo_2}, we get
\begin{align*}
    &\bar Y_0+\E_{ \hat u}\big[\int_0^{T\wedge \tau}\int_{\R^{d}} \bar \Theta_s(x_1,0) \, (\bar r_s
(x_1,\hat u)-1)\,\phi^{\Gbb, X}_s(\rmd x_1)\rmd C^{\Fbb, X}_s\big]\\
&=
\E_{ \hat u}[\bar g(X_{T \wedge \tau})] +\E_{ \hat u}\big[\int_0^{T\wedge \tau} \bar f(s,X_s,\bar \Theta_s(\cdot))\,\rmd C^{\Fbb, X}_s\big].
\end{align*}
Then, adding and subtracting $\E_u\big[\int_0^{T\wedge \tau} \bar l_s(X_s,\hat u)\,\rmd C^{X,\Fbb}_s\big]$ we obtain
\begin{align}\label{fund_rel_2}
    &\bar Y_0= \bar J(u) \notag\\
&+\E_u\Big[\int_0^{T\wedge \tau} \big[\bar f(s,X_{s},\bar \Theta_s(\cdot))-\bar l(s,X_{s},\hat u) -\int_{\R^{d}} \bar \Theta_s(x_1,0) \, (\bar r_s
(x_1,u)-1)\,\phi^{\Fbb, X}_s(\rmd x_1) \big]\rmd C^{\Fbb, X}_s\Big]
\end{align}
where we have also used the continuity of $C$. The conclusion follows from the definition of $\bar f$ in \eqref{minselector_bis_new}, noticing that the term in the square brackets in \eqref{fund_rel_2} is non positive, and it equals $0$ if $\hat u(\cdot) = \underline{\hat u}^\Theta(\cdot)$.
\endproof

\subsection{Relationship between the two control problems} \label{S:relcontrpb}
We end this section with some additional considerations on the  two  optimal control problems \eqref{contr_pb} and  \eqref{contr_pb_2}.  We first notice that
	the functional cost
	in optimal control problem  in  \eqref{contr_pb_2} can be equivalently rewritten in the form
	\begin{equation*}
\bar J(\hat u)= \E_{\hat u}\left[
\int_0^{T} \bar l_t(X_t,\hat u_t) \,1_{[0,T \wedge \tau(\omega)]}(t)\rmd C^{\F, X}_t + \bar g(X_{T\wedge \tau})\right],\quad \hat u \in \hat{\calc}.
\end{equation*}
 Clearly
 $\bar l1_{[0,T\wedge\tau]}$ is $\calp(\Gbb)\otimes \mathcal B(\R^{d})\otimes \calu$-measurable and $\bar g(X_{T\wedge \tau})$ is $\calg_{T }$-measurable. Recalling  Remark \ref{equivdec}, we see that the control problem in  \eqref{contr_pb_2}  can be seen as  one   of the type studied in Section \ref{S:T}, where however a subclass  of admissible controls is considered.

Let us now consider the \emph{enlarged} optimal control problem obtained from \eqref{contr_pb_2} by  taking the infimum over all the $\Gbb$-predictable processes $u(\cdot)$:
\begin{equation}\label{contr_pb_2bis}
\inf_{u \in \calc}\bar J(u)= \E_{u}\left[
\int_0^{T} \bar l_t(X_t,u_t) \,1_{[0,T \wedge \tau(\omega)]}(t)\rmd C^{\F, X}_t + \bar g(X_{T\wedge \tau})\right].
\end{equation}
According to Section \ref{S:T}, one can solve optimal control problem \eqref{contr_pb_2bis} by considering the following  BSDE: $\P$-a.s., for all $t\in [0,T]$,
\begin{align}\label{BSDE1bis}
    Y_{t}+\int_{t }^{T}\int_{\R^{d+ 1}} \Theta_s(x_1,x_2)\,(\mu^Z- \nu^Z)(\rmd s,\rmd x_1,\rmd x_2) =\bar g(X_{T \wedge \tau}) +\int_{t}^{T} \, f(s,X_s,\Theta_s(\cdot))\,\rmd C^{\F, X}_t,
\end{align}
where
\begin{align*}
f(\omega,t,y_1,\theta(\cdot))
&:=\inf_{u\in U}\Big\{
\bar l_t(\omega, y_1,u)1_{[0, T \wedge \tau(\omega)]}(t)+ \int_{\R^{d}} \theta(x_1,0)\, (\bar r_t
(\omega,x_1,u)-1)\,\phi_t^{\F, X}(\omega,\rmd x_1)\Big\}
\end{align*}
for every $\omega\in\Omega$, $t\in[0,T]$, $y_1 \in \R^d$,  and $\theta\in
\call^1(\R^{d + 1},\mathcal B(\R^{d + 1}), \phi_t(\omega,\rmd x_1, \rmd x_2))$.
\begin{assumption}\label{hyp:hamiltoniana_2}
For every $\Theta\in L^{1,0}(\mu^Z)$ there exists a $\Gbb$-predictable process
$\underline{u}^\Theta:\Omega\times [0,T]\to U$,  such that,
 for almost all $(\omega, t)$ with respect to the measure $d_1(\om, t)dC_t(\omega)\P(d\omega)$,
\begin{align}\label{ftheta}
    f(\omega,t,X_{t-}(\omega),\Theta_t(\omega, \cdot))&=
\bar l_t(\om, X_{t-}(\omega),\underline{u}^\Theta(\omega,t ))1_{[0, T \wedge \tau(\omega)]}(t) \\
&+ \int_{\R^{d+ 1}} \Theta_t(\omega, x_1, 0) \,
(\bar r_t
(\omega,x_1, \underline{u}^\Theta(\omega,t))-1)\,\phi_t^{\F, X}(\omega,\rmd x_1).\notag
\end{align}
\end{assumption}

Under Assumption \ref{hyp:hamiltoniana_2},  the well-posedness of BSDE \eqref{BSDE1bis} directly follows from  Proposition \ref{Pwellpos1}, noticing that  $l_t(\om, x_1, u):= \bar l_t(\om, x_1, u) 1_{[0, T \wedge \tau(\om)]}(t)$ and $g(\om, x_1):=g(\om) = \bar g(X_{T \wedge \tau}(\om))$ satisfy respectively Assumption \ref{hyp:rl} with $M_l = M_{\bar l}$ and Assumption \ref{hyp:controllo_g}   with $\beta >\bar L^2$.
Then Theorem  \ref{teoremacontrollononmarkov} reads in the present framework as follows.

 \begin{theorem}\label{T:cont2}
Let  Assumptions \ref{A:appl}  \ref{A:appl2}, \ref{hyp:controllosingle}, \ref{hyp:bar_rl} and
\ref{hyp:hamiltoniana_2} hold true. Assume also that Assumption \ref{hyp:controllo_barg_sigma}  holds true
with $\beta>\bar L^2$, with $\bar L$ in \eqref{D:Lbis}, and  that condition \eqref{C_r} holds true with $M_{\bar r}$ in place of $M_{r}$. Let  $(Y,\Theta(\cdot))\in L^{2, \beta}_{\text{Prog}}(\Omega \times [0,\,T],\Gbb) \times L^{2, \beta}(\mu^Z,\Gbb)$  denote
 the unique  solution to  BSDE \eqref{BSDE1bis}, with
 corresponding admissible control $\underline{u}^\Theta \in \mathcal C$  satisfying \eqref{ftheta}.
 Set
 \begin{equation*}%\label{J2}
\bar J(u):= \E_{u}\left[
\int_0^{T} \bar l_t(X_t,\hat u_t) \,1_{[0,T \wedge \tau]}(t)\rmd C^{\F, X}_t + \bar g(X_{T\wedge \tau})\right],\quad u \in {\calc}.
\end{equation*}
Then
$$
Y_0= \bar J(\underline{u}^\Theta)=
\inf_{u\in\calc }\bar J(u).
$$
\end{theorem}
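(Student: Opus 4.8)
The plan is to obtain this statement as an immediate specialisation of Theorem~\ref{teoremacontrollononmarkov} to suitably chosen data. I would set
\[
l_t(\omega,x_1,u):=\bar l_t(\omega,x_1,u)\,1_{[0,T\wedge\tau(\omega)]}(t),\qquad g(\omega,x_1):=\bar g\big(\omega,X_{T\wedge\tau}(\omega)\big),\qquad r:=\bar r .
\]
Under Assumptions~\ref{A:appl}--\ref{A:appl2}, Remark~\ref{equivdec} gives $\phi_t^{\Gbb,X}=\phi_t^{\F,X}$ and $\rmd C^{\Gbb,X}_t=\rmd C^{\F,X}_t$, so that the Hamiltonian~\eqref{defhamiltonian} associated with the triple $(l,r,\phi^{\Gbb,X})$ coincides exactly with the generator written in~\eqref{BSDE1bis}; in particular BSDE~\eqref{BSDE1bis} is literally BSDE~\eqref{bsdecontrollo} for this choice of data, and Assumption~\ref{hyp:hamiltoniana} for $(l,r,\phi^{\Gbb,X})$ is nothing but Assumption~\ref{hyp:hamiltoniana_2}. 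The task therefore reduces to checking that the data $(l,g,r)$ satisfy the hypotheses of Theorem~\ref{teoremacontrollononmarkov}, and then translating its conclusion back.

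First I would check measurability and boundedness. Since $\bar r,\bar l$ are $\Pscr(\Fbb)\otimes\Bscr(\R^d)\otimes\calu$-measurable, they are a fortiori $\Pscr(\Gbb)\otimes\Bscr(\R^d)\otimes\calu$-measurable; moreover the random interval $[0,T\wedge\tau]$ is $\Gbb$-predictable, because on $]0,+\infty[$ its indicator equals $1-H_{-}$, a left-continuous $\Gbb$-adapted process, intersected with the deterministic set $[0,T]$. Hence $l$ is $\Pscr(\Gbb)\otimes\Bscr(\R^d)\otimes\calu$-measurable and, by~\eqref{bar_ellelimitato}, the bounds~\eqref{ellelimitato} hold with $M_r:=M_{\bar r}>1$ and $M_l:=M_{\bar l}$. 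The terminal cost $g(\omega,x_1)=\bar g(\omega,X_{T\wedge\tau}(\omega))$ is $\Gscr_{T\wedge\tau}\otimes\Bscr(\R^d)$-measurable, hence $\Gscr_T\otimes\Bscr(\R^d)$-measurable; since $r=\bar r$ one has $L=\bar L$ in~\eqref{D:L}, and Assumption~\ref{hyp:controllo_g} with $\beta>\bar L^2$ is exactly Assumption~\ref{hyp:controllo_barg_sigma} via~\eqref{Ctau 2}--\eqref{g tau 2}. Finally the Dol\'eans-Dade exponential $L^u$ in~\eqref{Lu} depends only on $r=\bar r$, so it is unchanged and the measure $\P_u$ is the same as before; condition~\eqref{C_r} with $M_{\bar r}$ in place of $M_r$ is part of the hypotheses, and Assumptions~\ref{A:appl}, \ref{A:appl2}, \ref{hyp:controllosingle} and \ref{hyp:hamiltoniana_2} are assumed. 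Thus all hypotheses of Theorem~\ref{teoremacontrollononmarkov} are in force.

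Applying Theorem~\ref{teoremacontrollononmarkov} to these data then yields $Y_0=J(\underline{u}^\Theta)=\inf_{u\in\calc}J(u)$ with
\begin{align*}
J(u)&=\E_u\Big[\int_0^T l_t(X_t,u_t)\,\rmd C^{\Gbb,X}_t+g(X_T)\Big]\\
&=\E_u\Big[\int_0^T \bar l_t(X_t,u_t)\,1_{[0,T\wedge\tau]}(t)\,\rmd C^{\F,X}_t+\bar g(X_{T\wedge\tau})\Big]=\bar J(u),
\end{align*}
where in the middle step I used $\rmd C^{\Gbb,X}_t=\rmd C^{\F,X}_t$ and the definitions of $l$ and $g$; the minimiser is $\underline{u}^\Theta\in\calc$ satisfying~\eqref{ftheta}, which is \eqref{minselector} written for the present data. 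This gives $Y_0=\bar J(\underline{u}^\Theta)=\inf_{u\in\calc}\bar J(u)$, as claimed. I do not anticipate any real difficulty: the entire argument is a verification that the hypotheses transfer, and the only slightly delicate points are the $\Gbb$-predictability of $[0,T\wedge\tau]$ and the identification of the two Hamiltonians, the latter being precisely what Remark~\ref{equivdec} supplies under Assumption~\ref{A:appl2}.
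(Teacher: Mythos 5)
Your proof is correct and follows exactly the paper's own route: the paper likewise obtains Theorem~\ref{T:cont2} by setting $l:=\bar l\,1_{[0,T\wedge\tau]}$, $g:=\bar g(X_{T\wedge\tau})$, $r:=\bar r$, invoking Remark~\ref{equivdec} to identify $\phi^{\Gbb,X}=\phi^{\F,X}$ and $C^{\Gbb,X}=C^{\F,X}$, and then reading off Theorem~\ref{teoremacontrollononmarkov} (the paper states this transfer without writing out the hypothesis checks, which you verify explicitly). The only cosmetic difference is that you correct the paper's evident typo $\hat u_t$ to $u_t$ in the definition of $\bar J$; otherwise the argument is the same.
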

\endproof

Let us now go back to optimal control problem \eqref{contr_pb_2}.
In order to solve it,  we aim at finding an admissible  process $\hat {\underline{u}}\in \hat \calc$ such that
$$
\bar J(\underline{\hat u})=
\inf_{\hat u\in \hat \calc }\bar J(\hat u).
$$
We  prove below that
%and Theorem \ref{T:5.21}
such an optimal control process exists and  is provided by
 $
 \hat {\underline{u}}^\Theta:= \underline{u}^\Theta 1_{[0, T \wedge \tau]}
 $
  with $\underline{u}^\Theta \in  \calc$ the process  given in  Theorem \ref{T:cont2}. Moreover, the value functions of the optimal control problems \eqref{contr_pb_2} and \eqref{contr_pb_2bis} coincide.

\begin{theorem}\label{P:5.22}
Let  Assumptions \ref{A:appl}  \ref{A:appl2}, \ref{hyp:controllosingle}, \ref{hyp:bar_rl} and
\ref{hyp:hamiltoniana_2} hold true. Assume also that Assumption \ref{hyp:controllo_barg_sigma}  holds true
with $\beta>\bar L^2$, with $\bar L$ in \eqref{D:Lbis}, and  that condition \eqref{C_r} holds true with $M_{\bar r}$ in place of $M_{r}$. Let  $(Y,\Theta(\cdot))\in L^{2, \beta}_{\text{Prog}}(\Omega \times [0,\,T],\Gbb) \times L^{2, \beta}(\mu^Z,\Gbb)$  denote
 the unique  solution to  BSDE \eqref{BSDE1bis}, with
 corresponding admissible control $\underline{u}^\Theta \in \mathcal C$  satisfying \eqref{ftheta}.
 Set
 $$
 \hat {\underline{u}}^\Theta:= \underline{u}^\Theta 1_{[0, T \wedge \tau]} \in \hat \calc.
 $$
Then $\hat {\underline{u}}^\Theta$ is an optimal control process for the control problem \eqref{contr_pb_2} and the value function is represented by the initial solution to BSDE \eqref{BSDE1bis}, namely
\begin{equation}\label{final_rep}
Y_0= \bar J(\underline{\hat u}^\Theta)=
\inf_{\hat u\in \hat \calc }\bar J(\hat u)=
\inf_{u\in \hat \calc }\bar J( u).
\end{equation}
\end{theorem}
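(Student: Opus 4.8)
The plan is to deduce Theorem \ref{P:5.22} from Theorem \ref{T:cont2} by showing that the restriction map $u\mapsto u\,1_{[0,T\wedge\tau]}$ leaves the cost functional $\bar J$ unchanged. Once this invariance is established, it forces the value function of the constrained problem \eqref{contr_pb_2} to coincide with that of the enlarged problem \eqref{contr_pb_2bis}, and it turns the optimal control $\underline{u}^\Theta\in\calc$ of the latter into the optimal control $\hat{\underline{u}}^\Theta=\underline{u}^\Theta\,1_{[0,T\wedge\tau]}\in\hat\calc$ of the former.

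First I would record what Theorem \ref{T:cont2} already gives: the enlarged problem \eqref{contr_pb_2bis} is solved and $Y_0=\bar J(\underline{u}^\Theta)=\inf_{u\in\calc}\bar J(u)$. Since $\hat\calc\subseteq\calc$, the inequality $\inf_{u\in\calc}\bar J(u)\le\inf_{\hat u\in\hat\calc}\bar J(\hat u)$ is automatic, so the whole statement follows from the claim that, for every $u\in\calc$, one has $\bar J(u)=\bar J(u\,1_{[0,T\wedge\tau]})$: applying the claim to $u=\underline{u}^\Theta$ gives $\bar J(\hat{\underline{u}}^\Theta)=\bar J(\underline{u}^\Theta)=Y_0$, while applying it to an arbitrary $u\in\calc$ (noting $u\,1_{[0,T\wedge\tau]}\in\hat\calc$) gives the reverse inequality $\inf_{\hat u\in\hat\calc}\bar J(\hat u)\le\inf_{u\in\calc}\bar J(u)$; combining these yields $Y_0=\bar J(\hat{\underline{u}}^\Theta)=\inf_{\hat u\in\hat\calc}\bar J(\hat u)=\inf_{u\in\calc}\bar J(u)$, which is \eqref{final_rep}.

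To prove the claim I would fix $u\in\calc$, put $\hat u:=u\,1_{[0,T\wedge\tau]}$ (so $\hat u=u$ pointwise on $[0,T\wedge\tau]$), and write $\bar J(u)=\Ebb[L^u_T\,\xi]$ with $\xi:=\int_0^{T\wedge\tau}\bar l_t(X_t,u_t)\,\rmd C^{\F,X}_t+\bar g(X_{T\wedge\tau})$, and likewise $\bar J(\hat u)=\Ebb[L^{\hat u}_T\,\xi]$; the terminal cost is control-free and the running-cost integral only involves $u$ on $[0,T\wedge\tau]$, so the two expressions carry the \emph{same} random variable $\xi$, which moreover is $\Gscr_{T\wedge\tau}$-measurable (the stopped process $X_{\cdot\wedge\tau}$, the integral of the $\Gbb$-progressive integrand against the continuous increasing process $C^{\F,X}$ up to the $\Gbb$-stopping time $T\wedge\tau$, and $\bar g$ via Assumption \ref{hyp:controllo_barg_sigma} are all $\Gscr_{T\wedge\tau}$-measurable). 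By the analogue of Lemma \ref{L:Lu} with $M_{\bar r}$ in place of $M_r$ (valid under \eqref{C_r}), $L^u$ and $L^{\hat u}$ are square-integrable $\Gbb$-martingales, so optional sampling at the bounded stopping time $T\wedge\tau$ gives $\Ebb[L^u_T\mid\Gscr_{T\wedge\tau}]=L^u_{T\wedge\tau}$ and analogously for $\hat u$; hence $\bar J(u)=\Ebb[L^u_{T\wedge\tau}\,\xi]$ and $\bar J(\hat u)=\Ebb[L^{\hat u}_{T\wedge\tau}\,\xi]$. It then remains to check that $L^u_{T\wedge\tau}=L^{\hat u}_{T\wedge\tau}$: from \eqref{Lu}, the exponential factor of $L^u_{T\wedge\tau}$ involves $u$ only through its values on $[0,T\wedge\tau]$, and in the product $\prod_{n\,:\,T_n\le T\wedge\tau}\bigl(\bar r_{T_n}(X_{T_n},u_{T_n})1_{\{H_{T_n}=0\}}+1_{\{H_{T_n}\ne 0\}}\bigr)$ the control enters only on $\{H_{T_n}=0\}=\{T_n<\tau\}\subseteq[0,T\wedge\tau]$, while the only possibly remaining factor, at $T_n=\tau$ (present only when $\tau\le T$), equals $1$ irrespective of the control because $\Delta X_\tau=0$ by assumption $(\Ascr)$ and $H_\tau=1$. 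This gives $\bar J(u)=\bar J(\hat u)$ and closes the argument.

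The hard part will be this last identity $L^u_{T\wedge\tau}=L^{\hat u}_{T\wedge\tau}$, i.e.\ the verification that the Dol\'eans-Dade density $L^u$ evaluated at $T\wedge\tau$ depends on the control only through its restriction to $[0,T\wedge\tau]$; this is where the structure $\Delta X\,\Delta H=0$ (from $(\Ascr)$) together with $H_\tau=1$ must be used to neutralise the product term at the jump time $\tau$. The accompanying measurability bookkeeping --- that $\xi$ is genuinely $\Gscr_{T\wedge\tau}$-measurable and that the change-of-measure representation $\bar J(u)=\Ebb[L^u_T\,\xi]$ combined with optional sampling is legitimate --- is routine.
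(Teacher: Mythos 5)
Your proof is correct, but it follows a genuinely different route from the paper's. The paper argues at the level of BSDEs: in Step~1 it verifies the Hamiltonian identity \eqref{barf}, which shows that BSDE \eqref{BSDE1bis} coincides with the auxiliary BSDE \eqref{bsdecontrollo_barg}, and then Theorem~\ref{T:new} yields that $(Y,\Theta)$ is stopped at $\tau$; in Step~2 it shows the restricted control $\hat{\underline u}^\Theta$ satisfies the Hamiltonian identity \eqref{ftheta2} on $[0,T\wedge\tau]$ and concludes via the fundamental relation used in Theorem~\ref{T:cont2_bis}. You instead work directly with the cost functionals: you take the value of the enlarged problem from Theorem~\ref{T:cont2}, observe that $\hat\calc\subseteq\calc$ gives one inequality, and then prove $\bar J(u)=\bar J(u\,1_{[0,T\wedge\tau]})$ for every $u\in\calc$. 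Your key lemma is that the random variable $\xi=\int_0^{T\wedge\tau}\bar l_t(X_t,u_t)\,\rmd C^{\Fbb,X}_t+\bar g(X_{T\wedge\tau})$ is $\Gscr_{T\wedge\tau}$-measurable, so optional sampling of the UI martingale $L^u$ at $T\wedge\tau$ replaces $L^u_T$ by $L^u_{T\wedge\tau}$, which depends on the control only through its restriction to $[0,T\wedge\tau]$ (the possible jump of $\mu^Z$ at $\tau$ contributes a control-free factor $1$ because $(\Ascr)$ forces $\Delta X_\tau=0$ and $H_\tau=1$). Your version is more elementary --- it never touches the BSDE and makes the structural mechanism transparent: the constrained problem has the same value as the enlarged one because neither the cost nor the relevant part of the Girsanov density sees the control after $T\wedge\tau$. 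What you give up is the intermediate information the paper derives along the way, namely that the solution $(Y,\Theta)$ to \eqref{BSDE1bis} is stopped at $\tau$; that fact is not part of the statement being proved, so the omission is harmless here, but it is what makes the paper's Step~1 worth recording separately.
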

\proof
We divise the proof into two steps.

\noindent \emph{Step 1.} The unique solution $(Y, \Theta(\cdot))$ to BSDE
\eqref{BSDE1bis}
coincides with the one to BSDE \eqref{bsdecontrollo_barg}. In particular,  $Y=Y_{\cdot \wedge \tau}$, $%\rmd C_t(\omega)
\P(\rmd \omega)$-a.e. and $\Theta = \Theta 1_{[0,T \wedge \tau]}$ , $\phi_t(\omega, \rmd x_1, \rmd x_2)\,\rmd C_t(\omega) \,\P(\rmd \omega)$-a.e.

\medskip

\noindent \emph{Step 2.}  Identity \eqref{final_rep} holds true. 
\medskip

\noindent \emph{Proof of Step 1.} Let $(\bar Y, \bar \Theta)$ be the unique solution to BSDE \eqref{bsdecontrollo_barg}.
It is enough to show the identity
\begin{align}\label{barf}
\bar f(\omega, s,X_s(\omega),\bar \Theta_s(\omega,\cdot))\,1_{[0,T \wedge \tau(\omega)]}(s)
&=f(\omega,s,X_s(\omega),\bar \Theta_s(\omega,\cdot)),\,\,  d_1(\om,s)\rmd C_s(\omega) \P(\rmd \omega)\text{-a.e.}
\end{align}
As a matter of fact, plugging \eqref{barf} in BSDE \eqref{bsdecontrollo_barg}, we would  get BSDE \eqref{BSDE1bis}.
Then, by the uniqueness of the solution, $(Y, \Theta(\cdot))$  
would coincide with the solution to BSDE \eqref{bsdecontrollo_barg}  and, by Theorem   \ref{T:new}, $Y=Y_{\cdot \wedge \tau}$, $%\rmd C_t(\omega)
\P(\rmd \omega)$-a.e. and $\Theta = \Theta 1_{[0,T \wedge \tau]}$, $\phi_t(\omega, \rmd x_1, \rmd x_2)\,\rmd C_t(\omega) \,\P(\rmd \omega)$-a.e.

Let us thus prove \eqref{barf}. By Theorem \ref{T:new}, $\bar \Theta = \bar \Theta 1_{[0,T \wedge \tau]}$ , $\phi_t(\omega, \rmd x_1, \rmd x_2)\,\rmd C_t(\omega) \,\P(\rmd \omega)$-a.e., so that
\begin{align}\label{coll1}
%f(\omega, s,X_s(\omega),H_s,\Theta_s(\omega,\cdot))\,1_{[0,T \wedge \tau(\omega)]}(s)& =
f\Big(\omega,s,X_s(\omega),\bar \Theta_s(\omega,\cdot)\,1_{[0,T \wedge \tau(\omega)]}(s)\Big)
&=f(\omega,s,X_s(\omega),\bar \Theta_s(\omega,\cdot)).
\end{align}
On the other hand, recalling  \eqref{defhamiltonian2}, we have that, $\rmd C_s(\omega) \P(\rmd \omega)$-almost surely on $\Omega \times [0,\,T]$,
\begin{align}\label{coll2}
&f\Big(\omega,s,X_s(\omega),\bar \Theta_s(\omega,\cdot)\,1_{[0,T \wedge \tau(\omega)]}(s)\Big)\notag\\
  &=\inf_{u\in U}\Big\{
\bar l_s(\omega, X_s(\omega),u)\,1_{[0,T \wedge \tau(\omega)]}(s)+ \int_{\R^{d}} \bar \Theta(x_1,0)\,1_{[0,T \wedge \tau(\omega)]}(s) \, (\bar r_s
(\omega,x_1,u)-1)\,\phi_s^{\F, X}(\omega,\rmd x_1)\Big\}\notag\\
    &=1_{[0,T \wedge \tau(\omega)]}(t)\,\inf_{u\in U}\Big\{
\bar l_s(\omega, X_s(\omega),u)+ \int_{\R^{d}} \bar \Theta(x_1,0) \, (\bar r_s
(\omega,x_1,u)-1)\,\phi_s^{\F, X}(\omega,\rmd x_1)\Big\}\notag\\
&=\bar f(\omega,s,X_s(\omega),\bar \Theta_s(\omega,\cdot))\,1_{[0,T \wedge \tau(\omega)]}(s).
\end{align}
Collecting \eqref{coll1} and \eqref{coll2}, we get  \eqref{barf}.

\medskip

\noindent \emph{Proof of Step 2.} Let  $(Y,\Theta(\cdot))\in L^{2, \beta}_{\text{Prog}}(\Omega \times [0,\,T],\Gbb) \times L^{2, \beta}(\mu^Z,\Gbb)$  denote
 the unique  solution to  BSDE \eqref{BSDE1bis}, with
 corresponding admissible control $\underline{u}^\Theta \in \mathcal C$  satisfying \eqref{ftheta}.
Recalling the proof of Theorem \ref{T:cont2_bis}, it is enough to show that, for almost all $(\omega, t)$ such that $t \leq T \wedge \tau(\omega)$ with respect to the measure $d_1(\om, t)dC_t(\omega)\P(d\omega)$,
 \begin{align}\label{ftheta2}
    \bar f(\omega,t,X_{t-}(\omega), \Theta_t(\omega, \cdot))&=
\bar l_t(\om, X_{t-}(\omega), {\underline{u}}^\Theta(\omega,t )) \\
&+ \int_{\R^{d}} \Theta_t(\omega, x_1, 0) \,
(\bar r_t
(\omega,x_1, {\underline{u}}^\Theta(\omega,t))-1)\,\phi_t^{\F, X}(\omega,\rmd x_1).\notag
\end{align}
Identities   \eqref{barf} and \eqref{ftheta}, together with Step 1,  yield
\begin{align*}
&\bar f(\omega, t,X_t(\omega),\Theta_t(\omega,\cdot))\,1_{[0,T \wedge \tau(\omega)]}(t)\\%&=f(\omega,s,X_s(\omega),\Theta_s(\omega,\cdot))\\
&=f(\omega,t,X_t(\omega),\Theta_t(\omega,\cdot))\\
&=
1_{[0,T \wedge \tau(\omega)]}(t) \bar l_t(\om, X_{t-}(\omega),\underline{u}^\Theta(\omega,t )) \\
&+ \int_{\R^{d}} \Theta_t(\omega, x_1, 0) \,
(\bar r_t
(\omega,x_1, \underline{u}^\Theta(\omega,t))-1)\,\phi_t^{\F, X}(\omega,\rmd x_1)\\
&=
1_{[0,T \wedge \tau(\omega)]}(t) \bar l_t(\om, X_{t-}(\omega),\underline{u}^\Theta(\omega,t )) \\
&+ \int_{\R^{d}} \Theta_t(\omega, x_1, 0) \,1_{[0,T \wedge \tau(\omega)]}(t)\,
(\bar r_t
(\omega,x_1, \underline{u}^\Theta(\omega,t))-1)\,\phi_t^{\F, X}(\omega,\rmd x_1)\\
&=1_{[0,T \wedge \tau(\omega)]}(t)\Big\{\,\bar l_t(\om, X_{t-}(\omega), {\underline{u}}^\Theta(\omega,t )) + \int_{\R^{d}} \Theta_t(\omega, x_1, 0) \,
(\bar r_t
(\omega,x_1, \underline{u}^\Theta(\omega,t))-1)\,\phi_t^{\F, X}(\omega,\rmd x_1)\Big\}
\end{align*}
that provides \eqref{ftheta2}.
\endproof

\appendix
\renewcommand\thesection{Appendix}
\section{}
\renewcommand\thesection{\Alph{subsection}}
\renewcommand\thesubsection{\Alph{subsection}}

\subsection{Proofs of technical results of Section \ref{sec:pred.proj}}\label{sec:app}

\begin{proof}[\textbf{Proof of Theorem \ref{T:Zcomp}}]
Let us start by proving (i). We have
\begin{align*}
	\mu^Z(\om, \rmd t, \rmd x_1, \rmd x_2)&=
	 \sum_{s >0} 1_{\{\Delta Z_s(\omega) \neq 0 \}}
	 \delta_{(s, \Delta Z_s(\omega))}(\rmd t, \rmd x_1,  \rmd x_2)\\
	&=\sum_{s >0} 1_{\{\Delta X_s(\omega) \neq 0, \Delta H_s(\omega) \neq 0 \}}
	 \delta_{(s, (\Delta X_s(\omega), \Delta H_s(\omega)))}(\rmd t, \rmd x_1,  \rmd x_2)\\
	 &+\sum_{s >0} 1_{\{\Delta X_s(\omega) \neq 0, \Delta H_s(\omega) = 0 \}}
	 \delta_{(s, (\Delta X_s(\omega), 0))}(\rmd t, \rmd x_1,  \rmd x_2)\\
	 &+\sum_{s >0} 1_{\{\Delta X_s(\omega) = 0, \Delta H_s(\omega) \neq 0 \}}
	 \delta_{(s, (0, \Delta H_s(\omega)))}(\rmd t, \rmd x_1,  \rmd x_2).
\end{align*}
Now we notice that, since by assumption $\Delta X \Delta H =0$, we have $\{\Delta X \neq 0\} \cap \{\Delta H \neq 0\} = \emptyset$. Therefore, noticing that moreover $\{\Delta X \neq 0\} \subseteq \{\Delta H= 0\}$ and $\{\Delta H \neq 0\} \subseteq \{\Delta X= 0\}$, previous expression reads
\begin{align*}
	\mu^Z(\om, \rmd t, \rmd x_1, \rmd x_2)	&=\sum_{s >0} 1_{\{\Delta X_s(\omega) \neq 0\}}
	 \delta_{(s, \Delta X_s(\omega))}(\rmd t, \rmd x_1)\delta_0(\rmd x_2)\\
	 &+\sum_{s >0} 1_{\{\Delta H_s(\omega) \neq 0 \}}
	 \delta_{(s, \Delta H_s(\omega))}(\rmd t,   \rmd x_2)\delta_0(\rmd x_2)\\
	 &=\mu^X(\om, \rmd t, \rmd x_1)\delta_0(\rmd x_2) + \mu^H(\om, \rmd t, \rmd x_2) \delta_0(\rmd x_1).
\end{align*}
Let us now prove (ii). Set
\begin{align*}
	\nu^{\Gbb,Z}(\om, \rmd t, \rmd x_1, \rmd x_2):= \nu^{\Gbb,X}(\om, \rmd t, \rmd x_1)\delta_0(\rmd x_2)+\nu^{\Gbb,H}(\om, \rmd t, \rmd x_2)\delta_0(\rmd x_1).
\end{align*}
We have to prove that $\nu^{\Gbb}$ is the $\Gbb$-dual predictable projection of $\mu^Z$.  To this end it is sufficient to show that, for every $\Gbb$-predictable function $W$ satisfying   $W \geq  0$ and $W \ast \mu^Z \in \Ascr_\mathrm{loc}^+(\Gbb)$, the process $W\ast\mu\p Z-W\ast\nu\p\Gbb\in\Mloc(\Gbb)$. So, let us consider such a $\Gbb$-predictable function $W$. We then have

\begin{align*}
	W \ast \mu^{Z}_t= \int_0^t \int_{\R^{d}}W(\om, s, x_1, 0)\mu^{X}(\om, \rmd s, \rmd x_1)+\int_0^t \int_{\R^{\ell}}W(\om, s, 0, x_2)\mu^{H}(\om, \rmd s, \rmd x_2)
\end{align*}
 and
\begin{align*}
	W \ast \nu^{\Gbb}_t= \int_0^t \int_{\R^{d}}W(\om, s, x_1, 0)\nu^{\Gbb, X}(\om, \rmd s, \rmd x_1)+\int_0^t \int_{\R^{\ell}}W(\om, s, 0, x_2)\nu^{\Gbb, H}(\om, \rmd s, \rmd x_2).
\end{align*}
Since $W \ast \mu^Z \in \Ascr_\mathrm{loc}^+(\Gbb)$  and $W\geq0$, we get
$$
\int_0^\cdot \int_{\R^{d}}W( s, x_1, 0)\mu^{X}(\rmd s, \rmd x_1), \int_0^\cdot \int_{\R^{\ell}}W(s, 0, x_2)\mu^{H}(\rmd s, \rmd x_2)\in \Ascr_\mathrm{loc}^+(\Gbb)
$$
and therefore
$$
\int_0^\cdot \int_{\R^{d}}W( s, x_1, 0)\nu^{\Gbb, X}(\rmd s, \rmd x_1), \int_0^\cdot \int_{\R^{\ell}}W(s, 0, x_2)\nu^{\Gbb, H}(\rmd s, \rmd x_2)\in \Ascr_\mathrm{loc}^+(\Gbb)
$$
that yields $W \ast \nu^{\Gbb} \in \Ascr_\mathrm{loc}^+(\Gbb)$. It remains to show  that $ W \ast \mu^Z-W \ast \nu^{\Gbb} \in \Mloc(\Gbb)$. By definition of $\nu^\Gbb$ we have
\begin{align*}
	W \ast \mu^{Z}-W \ast \nu^{\Gbb}&= \int_0^\cdot \int_{\R^{d}}W(\om, s, x_1, 0)\mu^{X}(\om, \rmd s, \rmd x_1)-\int_0^\cdot \int_{\R^{d}}W(\om, s, x_1, 0)\nu^{\Gbb, X}(\om, \rmd s, \rmd x_1)\\
	&+\int_0^\cdot \int_{\R^{\ell}}W(\om, s, 0, x_2)\mu^{H}(\om, \rmd s, \rmd x_2)-\int_0^\cdot  \int_{\R^{\ell}}W(\om, s, 0, x_2)\nu^{\Gbb, H}(\om, \rmd s, \rmd x_2).
\end{align*}
 By linearity, it follows that $W \ast \mu^Z-W \ast \nu^{\Gbb} \in\Mloc(\Gbb)$, $\nu\p{\Gbb, X}$ and $\nu\p{\Gbb, H}$ being the $\Gbb$-compensator of $\mu\p X$ and $\mu\p H$, respectively. Let now $W$ be an arbitrary $\Gbb$-predictable function such that $|W|\ast \mu^Z\in \Ascr_\mathrm{loc}^+(\Gbb)$. Applying the previous step to $W^+$ and $W^-$ we get that $|W|\ast \nu^\Gbb\in \Ascr_\mathrm{loc}^+(\Gbb)$ and $W \ast \mu^Z-W \ast \nu^{\Gbb} \in \Mscr_\mathrm{loc}^+(\Gbb)$. The proof is complete.
\end{proof}

\begin{proof}[\textbf{Proof of Lemma \ref{lem:dec.pred.fun}}]
Let $W$ be a $\Gbb$-predictable bounded function of the form $W(\om,t,x)=f(x)J_t(\om)$, where $f$ is a bounded $\Bscr(\Rbb\p d)$-measurable function and $J$ is a bounded $\Gbb$-predictable process. Then, by \cite[Lemma 4.4 (b)]{Jeu80} we have $W(\om,t,x)1_{[0,\tau]}(\om,t)=f(x)\ol J_t(\om)1_{[0,\tau]}(\om,t)$, where $\ol J$ is an $\Fbb$-predictable bounded process. By a monotone class argument we get the statement for arbitrary bounded  $\Gbb$-predictable functions. Then, by approximation, we get the statement for arbitrary $\Gbb$-predictable functions. The proof is complete.
\end{proof}

\begin{proof}[\textbf{Proof of Proposition \ref{prop:qlc.stop}}]
Because of the $\Fbb$-quasi-left continuity of $X$,  $X\p{p,\Fbb}$ is an $\Fbb$-adapted continuous process (see \cite[Corollary 5.28 (3)]{HWY92}). Furthermore, by \cite[Theorem 5.1]{AJ17}, the process
\[
X\p\tau-(X\p{p,\Fbb})\p\tau-\int_0\p{\tau\wedge\cdot}\frac{1}{A_{s-}}\,\rmd\aPP{X-X\p{p,\Fbb}}{m}_s\p\Fbb
\]
is a $\Gbb$-local martingale. This means that $(X\p{p,\Fbb})\p\tau+\int_0\p{\tau\wedge\cdot}\frac{1}{A_{s-}}\,\rmd\aPP{X-X\p{p,\Fbb}}{m}_s\p\Fbb$ is the $\Gbb$-dual predictable projection of $X$. We denote this process by $(X\p{p,\Gbb})\p\tau$. Since $X$ is $\Fbb$-quasi-left continuous, $\aPP{X-X\p{p,\Fbb}}{m}\p\Fbb$ is a continuous process. Indeed, be the property of the dual predictable projection, for every $\Fbb$-predictable finite valued stopping time $\sigma$ we have
\[
\Delta \aPP{X-X\p{p,\Fbb}}{m}\p\Fbb_\sig=\Ebb[\Delta [X-X\p{p,\Fbb},m]_\sig|\Fscr_{\sig-}]=\Ebb[\Delta X_\sig\Delta m_\sig|\Fscr_{\sig-}]=0.
\]
Hence, by the predictable section theorem, $\aPP{X-X\p{p,\Fbb}}{m}\p\Fbb$ being $\Fbb$-predictable, $\Delta \aPP{X-X\p{p,\Fbb}}{m}\p\Fbb=0$ up to an evanescent set. Therefore,  $(X\p{p,\Fbb})\p\tau$ being continuous, we deduce that $(X\p{p,\Gbb})\p\tau$ is continuous as well. Hence, by \cite[Corollary 5.28 (3)]{HWY92}, $X\p\tau$ is $\Gbb$-quasi-left continuous. The proof is complete.
\end{proof}

\begin{proof}[\textbf{Proof of Theorem \ref{thm:com.G}}]
Let $(\om,t,x)\mapsto W(\om,t,x)$ be a $\Gbb$-predictable bounded and nonnegative function. We then have that that $W\ast\mu\p X$ is locally bounded, and hence belongs to $\Ascr\p+_\mathrm{loc}(\Gbb)$, because of the estimate $W\ast\mu\p X\leq cN\p X$, where $c>0$ is such that $W(\om,t,x)\geq c$ and $N\p X$ is the point process associated to $X$. Because of Lemma \ref{lem:dec.pred.fun} there exists a bounded $\Fbb$-predictable function $\ol W$ such that
$$
W=\ol W1_{[0,\tau]}+W1_{(\tau,+\infty)}.
$$
We now analyse separately the two integrals $\ol W1_{[0,\tau]}\ast\mu\p X$ and $W1_{(\tau,+\infty)}\ast\mu\p X$.

We start with $\ol W1_{[0,\tau]}\ast\mu\p X$. We observe that $\ol W\ast\mu\p X$ is locally bounded and hence it belongs to $\Ascr\p+_\mathrm{loc}(\Fbb)$, $\ol W$ being an $\Fbb$-predictable bounded function. Hence, $(\ol W\ast\mu\p X)\p{p,\Fbb}$ exists and is equal to $\ol W\ast\nu\p{\Fbb, X}$. So, the process $\ol W\ast\mu\p X-\ol W\ast\nu\p{\Fbb, X}$ is an $\Fbb$-local martingale and, by \cite[Theorem 5.1]{AJ17},
\[
\ol W1_{[0,\tau]}\ast\mu\p X-\ol W1_{[0,\tau]}\ast\nu\p{\Fbb, X}-\int_0\p{\tau\wedge\cdot}\frac1{A_s-}\rmd\aPP{\ol W\ast\mu\p X-\ol W\ast\nu\p{\Fbb, X}}{m}\p\Fbb
\]
is a $\Gbb$-local martingale.  Since $m$ is an $\Fbb$-martingale, we find an $\Fbb$-predictable function $W\p m$ such that $m=m_0+W\p m\ast\mu\p X-W\p m\ast\nu\p{\Fbb,X}$. Furthermore, $m$ is in the class $BMO$. So, the $\Fbb$-predictable covariation $\aPP{\ol W\ast\mu\p X-\ol W\ast\nu\p{\Fbb, X}}{m}\p\Fbb$ is well defined and satisfies $\aPP{\ol W\ast\mu\p X-\ol W\ast\nu\p{\Fbb, X}}{m}\p\Fbb=\ol WW\p m\ast\nu\p{\Fbb,X}$, where we used \cite[Theorem II.1.13]{JS00} and that $\nu\p{\Fbb,X}$ is non-atomic in $t$, $X$ being $\Fbb$-quasi-left continuous (see \cite[Proposition II.2.9]{JS00}). So, by linearity we deduce that
\[
\ol W1_{[0,\tau]}\ast\mu\p X-\ol W\Big(1+\frac{W\p m}{A_-}\Big)1_{[0,\tau]}\ast\nu\p{\Fbb, X}
\]
is a $\Gbb$-local martingale. We also have that $\Delta m=W\p m(\cdot,\cdot,\Delta X)1_{\{\Delta X\neq0\}}=\Delta A$.  Therefore, we obtain
\[
A_-+W\p m(\cdot,\cdot,\Delta X)1_{\{\Delta X\neq0\}}=A_-+\Delta A=A\geq0.
\]

We now introduce the $\wt\Pscr(\Fbb)$-measurable set $D:=\{(\om,t,x)\in\wt\Om: A_{t-}(\om)+W\p m(\om,t,x)<0\}$. We denote by $M_{\mu\p X}$ the Dol\'eans measure induced by $\mu\p X$, that is, $M_{\mu\p X}(B)=\Ebb[1_B\ast\mu\p X_\infty]$, for every $B\in\Fscr\otimes\Bscr(\Rbb_+)\otimes\Bscr(\Rbb\p d)$. We then have $M_{\mu\p X}(D)=0$. Therefore, we can define the $\Fbb$-predictable function $W\p\prime(\om,t,x):= W\p m(\om,t,x)1_{D\p c}(\om,t,x)$ which again satisfies $m=W\p\prime\ast\mu\p X-W\p\prime\ast\nu\p{\Fbb,X}$ and  moreover $A_{t-}(\om)+W\p\prime(\om,t,x)\geq0$ \emph{identically}. We now define the $\Gbb$-predictable measure
\[
\nu\p{\Gbb,\leq\tau}(\om,\rmd t,\rmd x)=1_{[0,\tau]}\Big(1+\frac{W\p\prime(\om,t,x)}{A_{t-}(\om)}\Big)\nu\p{\Fbb,X}(\om,\rmd t,\rmd x).
\]
We then clearly have that $W1_{[0,\tau]}\ast\mu\p X-W\ast\nu\p{\Gbb,\leq\tau}$ is a $\Gbb$-local martingale.

We now come to the integral $W1_{(\tau,+\infty)}\ast\mu\p X$. To begin with, we introduce  the filtration $\Gbb\p\tau=(\Gscr_t\p\tau)_{t\geq0}$ by $\Gscr_t\p\tau:=\bigcap_{\ep>0}\Fscr_{t+\ep}\vee\sig(\tau)$, that is, $\Gbb\p\tau$ is the initial enlargement of $\Fbb$ by $\tau$. It is clear that $\Gbb\subseteq\Gbb\p\tau$ and that $\Gbb=\Gbb\p\tau$ over the stochastic interval $(\tau,+\infty]$. Following the proof of \cite[Proposition 3.14 and Theorem 4.1]{J85} we can show that there exists a $\Gbb\p\tau$-predictable function $U$ such that $1+U\geq0$ and
\[
\wt\nu(\om,\rmd t,\rmd x)=(1+U(\om,t,x))\nu\p{\Fbb,X}(\om,\rmd t,\rmd x)
\]
is the $\Gbb\p\tau$-dual predictable projection of $\mu\p X$. In particular, $W1_{(\tau,+\infty)}\ast\mu\p X$ being $\Gbb\p\tau$-adapted and locally bounded, we deduce that $W1_{(\tau,+\infty)}\ast\mu\p X-W1_{(\tau,+\infty)}\ast\wt\nu$ is a $\Gbb\p\tau$ local martingale. We now observe that the function $1_{(\tau,+\infty)}(1+U)$ is indeed $\Gbb$-predictable, since $\Gbb$ and $\Gbb\p\tau$ coincides over $(\tau,+\infty]$. This implies that the $\Gbb\p\tau$- local martingale $W1_{(\tau,+\infty)}\ast\mu\p X-W1_{(\tau,+\infty)}\ast\wt\nu$ is actually $\Gbb$-adapted. Furthermore, this is a martingale with bounded jumps, the process $W1_{(\tau,+\infty)}\ast\wt\nu$ being continuous and $W1_{(\tau,+\infty)}$ being bounded. We can therefore apply \cite[Proposition 9.18 (iii) and the subsequent comment]{J79} to obtain that $W1_{(\tau,+\infty)}\ast\mu\p X-W1_{(\tau,+\infty)}\ast\wt\nu$ is indeed a $\Gbb$-local martingale. We now define the $\Gbb$-predictable random measures $\nu\p{\Gbb,>\tau}(\om,\rmd t,\rmd x):=1_{(\tau,+\infty)}(\om,t)(1+U(\om,t,x))\nu\p{\Fbb, X}(\om,\rmd t,\rmd x)$ and
\[\begin{split}
\nu\p\Gbb(\om,\rmd t,\rmd x)&=\nu\p{\Gbb,\leq\tau}(\om,\rmd t,\rmd x)+\nu\p{\Gbb,>\tau}(\om,\rmd t,\rmd x)
\\&=
\bigg(1_{[0,\tau]}(\om,t)\Big(1+\frac{W\p\prime(\om,t,x)}{A_{t-}(\om)}\Big)+1_{(\tau,+\infty)}(\om,t)(1+U(\om,t,x))\bigg)\nu\p{\Fbb, X}(\om,\rmd t,\rmd x).
\end{split}
\]
Putting together the two previous steps, we get that the process $W\ast\mu\p X-W\ast\nu\p{\Gbb}$ is a $\Gbb$-local martingale, for every bounded nonnegative $\Gbb$-predictable function $W$.

Let now $W$ be a nonnegative $\Gbb$-predictable function and define $W\p n(\om,t,x):=W(\om,t,x)\wedge n$. Because of the previous step, the process $W\p n\ast\mu\p X-W\p n\ast\nu\p \Gbb$ is a $\Gbb$-local martingale. Let $(\sig_n)_n$ be a localizing sequence. For every $n\geq 0$ we get $\Ebb[W\p n1_{[0,\sig_n]}\ast\mu\p X_\infty]=\Ebb[W\p n1_{[0,\sig_n]}\ast\nu\p\Gbb_\infty]$. Since $W\p n1_{[0,\sig_n]}$ converges monotonically to $W$, by monotone convergence we obtain the identity $\Ebb[W\ast\mu\p X_\infty]=\Ebb[W\ast\nu\p\Gbb_\infty]$, for every nonnegative $\Gbb$-predictable function $W$. By \cite[Theorem II.1.18 (i)]{JS00} we deduce the identity $\nu\p{\Gbb}=\nu\p{\Gbb, X}$. In particular, since $\nu\p{\Fbb, X}(\{t\}\times\Rbb\p d)=0$ for every $t$, $X$ being $\Fbb$-quasi-left continuous, we deduce that  $\nu\p{\Gbb, X}(\{t\}\times\Rbb\p d)=0$ for every $t$, meaning that $X$ is also $\Gbb$-quasi left continuous. The proof of the theorem is complete.
\end{proof}

	\subsection{Proofs of technical results of Section \ref{sec:appl.con}}\label{A:B}
	\setcounter{equation}{0}
\setcounter{theorem}{0}

\proof [\textbf{Proof of Proposition \ref{Pwellpos1}}]
To show the result we apply \cite[Theorem 3.4]{CF13} to the present framework.
% {\pa OMESSO PERCHE DETTO PRIMA}
Let us then   check that all the hypotheses of \cite[Theorem 3.4]{CF13}   are satisfied. More precisely,
setting
$$
F(\omega, t, X_t(\omega), \Theta_t(\omega))=d_1(\omega, t)f(\omega, t, X_t(\omega), \Theta_t(\omega)),
$$
 with $d_1$ given in \eqref{d1}, we have to verify that:
% we have to verify that
\begin{enumerate}
	\item[(1)] The terminal cost $g(X_T)$ is $\mathcal G_T$-measurable and there exists $\beta >0$ such that $\E[\rme^{\beta C_T}|g(X_T)|^2]< \infty$ and $\E\Big[\int_0^T \rme^{\beta C_t}|F(t, X_t,  0)|^2\rmd C_t\Big] < \infty$.
	\item[(2)] For every $\omega\in\Omega$, $t\in[0,T]$,
$\Theta(\cdot) \in  L^{2, \beta}(\mu^Z,\Gbb)$, the mapping $(t, \omega)\mapsto %\textcolor{red}{d_1(\omega, t)}\,
F(\omega, t,X_t(\omega), \Theta_t(\omega, \cdot))$ is  $\mathbb G$-progressively measurable.
\item[(3)] For every $\omega \in \Omega$, $t\in[0,T]$,
	and $\zeta, \zeta'\in
\call^2(\R^{d + 1},\mathcal B(\R^{d + 1}), \phi_t(\omega,\rmd x_1, \rmd x_2))$
\begin{align}
 &%\textcolor{red}{d_1(t)}\,
 |F(t, \omega,X_t(\omega),\zeta)-F(t, \omega,X_t(\omega),  \zeta')| \notag\\
 &\leq   L_F \Big(\int_{\R^{d + 1}} |\zeta(x_1,x_2) -\zeta'(x_1,x_2)| \phi_t(\omega,\rmd x_1,\rmd x_2)\Big)^{1/2},\label{Lipscond}
\end{align}
where $L_F>0$ is a constant.

\end{enumerate}

Point (1)  follows from Assumption \ref{hyp:controllo_g} with $\beta >L^2$,
being
\begin{align*}
  \E \Big[\int_0^T \rme^{\beta C_t}|F(t,X_t,0)|^2 
  \rmd C_t\Big]
  \leq \E \Big[\int_0^T \rme^{\beta C_t}|\inf_{u\in U}  l_t( X_t,u)|^2 \rmd C_t\Big]
  \le M_l^2\,\beta^{-1}\,
\E [\rme^{\beta C_T}] < + \infty.
\end{align*}
Concerning (3), by
the boundedness conditions  \eqref{ellelimitato} in  Assumption \ref{hyp:rl},
 it is easy to check  that estimate \eqref{Lipscond} holds with $L_F=L$.
 Finally,  the measurability requirements in (2) for the Hamiltonian $f$  hold thanks to  Assumption \ref{hyp:hamiltoniana}. 
By \eqref{minselector}, for every $\Theta(\cdot) \in L^{2, \beta}(\mu^Z,\Gbb)$ (recalling the inclusion $ L^{2,\beta}(\mu^Z)  \subseteq L^{1,0}(\mu^Z)$ for all $\beta>0$),
the map $(t, \omega) \mapsto    f(\omega,t,X_{t-}(\omega),\Theta_t(\omega, \cdot))$ is $\mathbb G$-progressively measurable; since by Remark \ref{R:contC} the process $C$ 
is   continuous and $X$ has piecewise constant paths,  the same holds  (after modification on a set of measure zero) for $(\omega, t) \mapsto    F(\omega,t,X_{t}(\omega),\Theta_t(\omega, \cdot))$.
\endproof

\proof  [\textbf{Proof of Proposition \ref{P:5.20}}]
 It suffices to verify that all the hypotheses of \cite[Theorem 3.4]{CF13}   are satisfied.
By Assumption \ref{hyp:controllo_barg_sigma} with $\beta >\bar L^2$, we have that  $\bar g(X_{T \wedge \tau})$ is $\mathcal G_{T}$-measurable and  $\E[\rme^{\beta C_{T}}|\bar g(X_{T \wedge \tau})|^2]< \infty$.
Moreover, by Assumption \ref{hyp:hamiltoniana_2_new}, for every $\omega\in\Omega$, $t\in[0,T]$,
$\Theta(\cdot) \in L^{2, \beta}(\mu^Z)$,
the mapping
$$
\bar F(\omega, t, X_t(\omega), \Theta_t(\omega))=d_1(\omega, t)\bar f(\omega, t, X_t(\omega), \Theta_t(\omega))1_{[0,T \wedge \tau(\om)]}(t)
$$
  is  $\mathbb G$-progressively measurable. Finally,  by Assumptions  \ref{hyp:bar_rl}
	and    \ref{hyp:controllo_barg_sigma}, $\E\Big[\int_0^T \rme^{\beta C_t}|\bar F(t, X_t, 0)|^2
	% |d_1(t)|^2
	\, \rmd C_t\Big]$ is finite for every $\beta >\bar L^2$, and
	\begin{align*}
 &
 |\bar F(t, \omega,X_t(\omega),\zeta)-\bar F(t, \omega,X_t(\omega), \zeta')| \leq   \bar L \Big(\int_{\R^{d + 1}} |\zeta(x_1,x_2) -\zeta'(x_1,x_2)| \phi_t(\omega,\rmd x_1,\rmd x_2)\Big)^{1/2}.
\end{align*}
\endproof

\proof [\textbf{Proof of Theorem \ref{T:new}}]
Let $(\bar Y, \bar \Theta(\cdot))$ be the unique solution to BSDE \eqref{bsdecontrollo_barg}.
We know that the process $\bar Y$ is defined as
\begin{equation}\label{Y def}
\bar Y_t=\E\Big[\bar g(X_{T \wedge \tau}) + \int_0^T \bar f(s,X_s,\bar \Theta_s(\cdot))\,1_{[0,T \wedge \tau]}(s)\,\rmd C^{\F, X}_s \Big|{\mathcal G}_t\Big] - \int_0^t \bar f(s,X_s,\bar \Theta_s(\cdot))\,1_{[0,T \wedge \tau]}(s)\,\rmd C^{\F, X}_s
\end{equation}
and moreover the following representation holds:
\begin{align*}
&\bar g(X_{T \wedge \tau}) +\int_{0}^{T }  \bar f(s,X_s,\bar \Theta_s(\cdot))\,1_{[0,T \wedge \tau]}(s)\rmd C^{\F, X}_s
\\&= \E\Big[\bar g(X_{T \wedge \tau}) +\int_{0}^{T}  \bar f(s,X_s,\bar \Theta_s(\cdot))\,1_{[0,T \wedge \tau]}(s)\,\rmd C^{\F, X}_s \Big]
+\int_{0}^{T}\int_{\R^{d+ 1}} \bar \Theta_s(x_1,x_2)\,(\mu^Z- \nu^Z)(\rmd s,\rmd x_1,\rmd x_2)
\end{align*}
where the last integral is a martingale.
Consequently, for all $t \in [0,T]$,
\begin{align*}
&\bar Y_t =
 \E\Big[\bar g(X_{T \wedge \tau}) +\int_{0}^{T}  \bar f(s,X_s,\bar \Theta_s(\cdot))\,1_{[0,T \wedge \tau]}(s)\,\rmd C^{\F, X}_s \Big] \\
&+ \int_{0}^{t}\int_{\R^{d+ 1}} \bar \Theta_s(x_1,x_2)\,(\mu^Z- \nu^Z)(\rmd s,\rmd x_1,\rmd x_2)  - \int_0^t \bar f(s,X_s,\bar \Theta_s(\cdot))\,1_{[0,T \wedge \tau]}(s)\,\rmd C^{\F, X}_s.\notag
\end{align*}
 By Doob's stopping theorem and \eqref{bsdecontrollo_barg}, previous expression gives
\begin{align}\label{eq2}
 &\bar Y_{t \wedge \tau}
= \E\Big[\bar g(X_{T \wedge \tau}) +\int_0^{T\wedge \tau}  \bar f(s,X_s,\bar \Theta_s(\cdot))\,\rmd C^{\F, X}_s  \Big ]\\
&+\int_{0}^{t \wedge \tau}\int_{\R^{d+ 1}}
\bar \Theta_s(x_1,x_2)\,(\mu^Z- \nu^Z)(\rmd s,\rmd x_1,\rmd x_2) - \int_0^{t\wedge \tau} \bar f(s,X_s,\bar \Theta_s(\cdot))\,\rmd C^{\F, X}_s.\notag
\end{align}
Now we notice that for $t=0 $  and $t = T \wedge \tau$ in \eqref{Y def} we obtain respectively
$$
\bar Y_0= \E\Big[\bar g(X_{T \wedge \tau}) +\int_0^{T\wedge \tau}  \bar f(s,X_s,\bar \Theta_s(\cdot))\,\rmd C^{\F, X}_s  \Big ], \quad \bar Y_{T\wedge \tau} = \bar g(X_{T \wedge \tau}).
$$
Then,
\begin{align*}
&\bar g(X_{T \wedge \tau}) +\int_{t \wedge \tau}^{T\wedge \tau}
 \,\bar f(s,X_s,\bar \Theta_s(\cdot))
 \,\rmd C^{\F, X}_s\\
&= \bar Y_{t \wedge \tau}+\int_{t \wedge \tau}^{T \wedge \tau}\int_{\R^{d+ 1}} %1_{[0,T \wedge \tau]}(s)
\bar \Theta_s(x_1,x_2)\,(\mu^Z- \nu^Z)(\rmd s,\rmd x_1,\rmd x_2)
\end{align*}
or, equivalently,
\begin{align*}
&\bar g(X_{T \wedge \tau}) +\int_{t \wedge \tau}^{T\wedge \tau}
 \,\bar f(s,X_s,\bar \Theta_s(\cdot)
 1_{[0,T \wedge \tau]}(s)
 )
 \,\rmd C^{\F, X}_s\\
&=\bar  Y_{t \wedge \tau}+\int_{t \wedge \tau}^{T \wedge \tau}\int_{\R^{d+ 1}} 1_{[0,T \wedge \tau]}(s)
\bar \Theta_s(x_1,x_2)\,(\mu^Z- \nu^Z)(\rmd s,\rmd x_1,\rmd x_2).
\end{align*}
that proves that  $(\bar Y_{\cdot \wedge  \tau}, \bar \Theta(\cdot) 1_{[0,T \wedge \tau]})$ satisfies \eqref{bsdecontrollo_2}.

At this point, we notice that
		\begin{align*}
    &\bar Y_{t \wedge \tau}+\int_{t \wedge \tau}^{T \wedge \tau}\int_{\R^{d+ 1}} \bar \Theta_s(x_1,x_2)
    \,1_{[0,T \wedge \tau]}(s)
    \,(\mu^Z- \nu^Z)(\rmd s,\rmd x_1,\rmd x_2) \notag\\
&=\bar g(X_{T \wedge \tau}) +\int_{t\wedge \tau}^{T \wedge \tau} \,\bar f(s,X_s,\bar \Theta_s(\cdot)\,1_{[0,T \wedge \tau]}(s)
)\,%1_{[0,T \wedge \tau]}(s)
\rmd C^{\F, X}_s, \quad t \in [0,\,T],
\end{align*}
Therefore,
		\begin{align*}
    &\bar Y_{t \wedge \tau}+\int_{t}^{T}\int_{\R^{d+ 1}} \bar \Theta_s(x_1,x_2)
    \,1_{[0,T \wedge \tau]}(s)
    \,(\mu^Z- \nu^Z)(\rmd s,\rmd x_1,\rmd x_2) \notag\\
&=\bar g(X_{T \wedge \tau}) +\int_{t}^{T} \,\bar f(s,X_s,\bar \Theta_s(\cdot)\,1_{[0,T \wedge \tau]}(s)
)\,1_{[0,T \wedge \tau]}(s)\rmd C^{\F, X}_s, \quad t \in [0,\,T],
\end{align*}
namely the pair $(\bar Y_{\cdot \wedge \tau}, \bar \Theta\,1_{[0,T \wedge \tau]})$ solves BSDE   \eqref{bsdecontrollo_barg} as well.
The conclusion follows from the uniqueness of the solution to \eqref{bsdecontrollo_barg} in $ L^{2, \beta}_{\text{Prog}}(\Omega \times [0,\,T],\Gbb) \times L^{2, \beta}(\mu^Z,\Gbb)$.
\endproof

\end{document}